\setlist[itemize]{itemsep=0ex,label=--}
\setlist[enumerate]{itemsep=1ex,topsep=1ex}
\newcommand\itv[5]{\left #1#2,#4\right#5}
\newcommand{\id}{\mathord{\operatorname{id}}} 
\newcommand{\red}{\mathrm{r}}
\newcommand{\full}{\mathrm{f}}
\newcommand{\Ll}{{\mathcal L}} 
\newcommand{\Kk}{{\mathcal K}} 
\newcommand{\Dd}{{\mathcal D}} 
\newcommand{\RR}{\mathbb{R}}
\newcommand{\NN}{\mathbb{N}} 
\newcommand{\ZZ}{\mathbb{Z}} 
\newcommand{\CC}{\mathbb{C}}
\newcommand{\FO}{\mathbb{F}O} 
\newcommand{\FU}{\mathbb{F}U} 
\newcommand{\Cst}{$C^*$-\relax}
\newcommand{\qdim}{\mathop{\operatorname{qdim}}}
\renewcommand{\dim}{\mathop{\operatorname{dim}}}
\newcommand{\Tr}{\mathop{\operatorname{Tr}}}
\newcommand{\Span}{\mathop{\operatorname{Span}}}
\newcommand{\Corep}{\mathop{\operatorname{Corep}}}
\newcommand{\Hom}{\mathop{\operatorname{Hom}}}
\newcommand{\Img}{\mathop{\operatorname{Im}}}
\newcommand{\Ker}{\mathop{\operatorname{Ker}}}
\newcommand{\Irr}{\mathop{\operatorname{Irr}}}
\newcommand{\rk}{\mathop{\operatorname{rank}}}
\renewcommand{\Re}{\mathop{\operatorname{Re}}}
\renewcommand{\Im}{\mathop{\operatorname{Im}}}
\renewcommand{\d}{\mathrm{d}}
\newcommand{\ts}{\textstyle}
\renewcommand{\ss}{\scriptscriptstyle}
\newcommand{\ds}{\displaystyle}
\newcommand{\jok}{\bigstar}
\theoremstyle{plain} 
\newtheorem{theorem}{Theorem}[section]
\newtheorem{lemma}[theorem]{Lemma}
\newtheorem{proposition}[theorem]{Proposition}
\newtheorem{corollary}[theorem]{Corollary} 
\theoremstyle{definition} 
\newtheorem{definition}[theorem]{Definition}
\newtheorem{remark}[theorem]{Remark}
\newtheorem*{maintheorem}{Theorem}
\title[Orthogonal free quantum group factors are strongly
$1$-bounded]{Orthogonal free quantum group factors are \\ strongly $1$-bounded}
\author{Michael Brannan \and Roland Vergnioux}
\begin{document}

% \subjclass[2000]{46L65 % Selfadjoint operator algebras (C*-algebras, von Neumann (W*-) algebras, etc.) / Quantizations, deformations
%   (%46L54, % Selfadjoint operator algebras (C*-algebras, von Neumann (W*-) algebras, etc.) / Free probability and free operator algebras
%   20G42, % Group theory and generalizations / Quantum groups (quantized function algebras) and their representations
%   % 16E40, % Homological methods / (Co)homology of rings and algebras (e.g. Hochschild, cyclic, dihedral, etc.)
%   46L10, % Selfadjoint operator algebras (C*-algebras, von Neumann (W*-) algebras, etc.) / General theory of von Neumann algebras
%   20F65)} % Special aspects of infinite or finite groups / Geometric group theory

\begin{abstract}
  We prove that the orthogonal free quantum group factors $\Ll(\FO_N)$ are
  strongly \mbox{$1$-bounded} in the sense of Jung.  In particular, they are not
  isomorphic to free group factors.  This result is obtained by
  establishing a spectral regularity result for the edge reversing operator on
  the quantum Cayley tree associated to $\FO_N$, and combining this result with
  a recent free entropy dimension rank theorem of Jung and Shlyakhtenko.
\end{abstract}

\maketitle

\section{Introduction}

The theory of discrete quantum groups provides a rich source of interesting
examples of C$^\ast$-algebras and von Neumann algebras.  In addition to ordinary
discrete groups, there is a wealth of examples and phenomena arising from
genuinely quantum groups \cite{1504.00092, Woronowicz_Pseudogroups, MR2554941,
  MR3452270, MR3478874, 1410.6238}.  Within the class of non-amenable discrete
quantum groups, the so-called {\it free quantum groups} of Wang and Van Daele
\cite{VanDaeleWang_Universal, Wang_FreeProd} somehow form the most prominent
examples.

In this paper, our main focus is on the structural theory of a family of
II$_1$-factors associated to a special family of free quantum groups, called the
{\it orthogonal free quantum groups}.  Given an integer $N \ge 2$, the
orthogonal free quantum group $\FO_N$ is the discrete quantum group defined via
the full Woronowicz C$^\ast$-algebra
\begin{displaymath}
  C^*_\full(\FO_N) = \langle u_{ij}, 1\leq i,j\leq N \mid u =
  [u_{ij}]~\text{unitary}, \ u_{ij} = u_{ij}^* \ \forall i,j \rangle.
\end{displaymath}
The C$^{\ast}$-algebra $C^*_\full(\FO_N)$ can be interpreted simultaneously
as a free analogue of the C$^\ast$-algebra of continuous functions on the real
orthogonal group $O_N$, and also as a ``matricial'' analogue of the full free
group C$^\ast$-algebra $C^*_\full((\ZZ/2\ZZ)^{\star N})$. Indeed, by
quotienting by the commutator ideal or by setting $u_{ij} = 0$ ($i \ne j$),
respectively, we obtain surjective Woronowicz-C$^\ast$-morphisms
\begin{displaymath}
  C^*_\full(\FO_N) \to C(O_N), \qquad C^*_\full(\FO_N) \to
  C^*_\full((\ZZ/2\ZZ)^{\star N}).
\end{displaymath}

Using the (tracial) Haar state $h : C^*_\full(\FO_N) \to \CC$, the GNS
construction yields in the usual way a Hilbert space $\ell^2(\FO_N)$ and a
corresponding von Neumann algebra
$\Ll(\FO_N) = \pi_h(C^*_\full(\FO_N))'' \subseteq B(\ell^2(\FO_N))$, where
$\pi_h$ denotes the GNS representation.  Over the past two decades, the
structure of the algebras $\Ll(\FO_N)$ has been investigated by many hands, and
in many respects $\FO_N$ and $\Ll(\FO_N)$ ($N \ge 3$) were shown to share many
properties with free groups $F_n$ and their von Neumann algebras $\Ll(F_n)$.  

For example, $\Ll(\FO_N)$ is a full type II$_1$-factor, it is strongly solid,
and in particular prime and has no Cartan subalgebra; it has the Haagerup
property (HAP), is weakly amenable with Cowling-Haagerup constant 1 (CMAP), and
satisfies the Connes' Embedding conjecture \cite{Banica_ReprOrtho, MR2355067,
  MR3391904, MR2995437, MR3084500, BrannanCollinsVergnioux, MR3455859}.
Moreover, it is known that $\Ll(\FO_N)$ behaves asymptotically like a free group
factor in the sense that the canonical generators of $\Ll(\FO_N)$ become
strongly asymptotically free semicircular systems as $N \to \infty$
\cite{MR2341011, MR3270793}.

With these many similarities between $\Ll(\FO_N)$ and $\Ll(F_n)$ at hand, the
following question naturally arises: \\ \\
{\it Can $\Ll(\FO_N)$ be isomorphic to a
  free group factor}?   \\ 

This particular question has been circulating within the operator algebra and quantum group communities ever since the publication of Banica's thesis \cite{Banica_ReprOrtho, MR1484551} in the mid 1990's, which first connected the corepresentation theory of free quantum groups to Voiculescu's free probability theory.  This deep connection with free independence established  by Banica was a direct inspiration for the many structural results for $\Ll(\FO_N)$ described in the previous paragraph.  In this paper, our main objective is to finally answer the above question in the negative.

\bigskip

The first evidence suggesting a negative answer to an isomorphism with a free
group factor came from the work of the second author \cite{Vergnioux_Paths},
where the $L^2$-cohomology of $\FO_N$ was investigated.  There it was shown that
the first $L^2$-Betti number of $\FO_N$ vanishes for all $N \ge 3$, see also
\cite{1701.06447}.  Combining this result with some deep work of
Connes-Shlyakhtenko \cite{MR2180603}, Jung \cite{MR2015736}, and
Biane-Capitaine-Guionnet \cite{BianeCapitaineGuionnet} on free entropy
dimension, it was shown by Collins and the authors
\cite{BrannanCollinsVergnioux} that
\begin{align}\label{BCV-result}
  \delta_0(u) = \delta^*(u) = 1 \qquad (N \ge 4)\footnotemark,
\end{align}\footnotetext{These values are also conjectured to hold for $N=3$ but in that case Connes embeddability of $\Ll(\FO_N)$ is open and therefore only the inequality $-\infty \le \delta_0(u) \le \delta^*(u) \le 1$ is known.}where
$u = (u_{ij})_{1 \le i,j \le N}$ is the set of canonical self-adjoint generators
of $\Ll(\FO_N)$, and $\delta_0, \delta^*$ are Voiculescu's (modified)
microstates free entropy dimension and non-microstates free entropy dimension,
respectively \cite{MR1887698, MR1371236, Voiculescu_V}.

Recall that if $X$ is a finite set of self-adjoint generators of a finite von
Neumann algebra $M$ with faithful normal tracial state $\tau$, $\delta_0(X)$ can
be interpreted as an asymptotic Minkowski dimension of the space of microstates
of $X$.  The fundamental problem relating to $\delta_0$ is whether or not it is
a W$^\ast$-invariant: If $X, X' \subset M_{sa}$ are finite sets generating the
same von Neumann subalgebra, do we have $\delta_0(X) = \delta_0(X')$?  If the
answer to this question is yes, then this would solve the well-known free group
factor isomorphism problem since $\Ll(F_n)$ admits a finite generating set $X$
with $\delta_0(X) = n$ \cite{MR1887698}.

In the remarkable work \cite{Jung_OneBounded}, Jung introduced a certain
technical strengthening of the condition $\delta_0(X) \le \alpha$ (see Section
\ref{FED} for details), which he called {\it $\alpha$-boundedness} of $X$.
There, Jung proved the remarkable result that if $(M, \tau)$ is a finite von
Neumann algebra generated by a $1$-bounded set $X\subset M_{sa}$ containing at
least one element with finite free entropy, then every other self-adjoint
generating set $X'$ of $M$ has $\delta_0(X') \le 1$.  In this case, we call $M$
a {\it strongly $1$-bounded von Neumann algebra}, and $\delta_0$ becomes a
$W^*$-invariant for $M$.  Note, in particular, that any strongly $1$-bounded von
Neumann algebra cannot be isomorphic to any (interpolated) free group factor
$\Ll(F_r)$ $(r \ge 2)$ \cite[Corollary 3.6]{Jung_OneBounded}.

The main result of this paper is an upgrade of the free entropy dimension
estimate \eqref{BCV-result} to the following theorem:

\begin{maintheorem}[See Theorem \ref{s1bdd} and Corollary \ref{consequence}]
  For each $N \ge 3$, $\Ll(\FO_N)$ is a strongly $1$-bounded von Neumann
  algebra.  In particular, $\Ll(\FO_N)$ is never isomorphic to an interpolated
  free group factor.
\end{maintheorem}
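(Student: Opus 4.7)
The strategy suggested by the abstract has two components: a spectral regularity result for the edge reversing operator $\Theta$ on the quantum Cayley tree of $\FO_N$, and an application of a recent Jung--Shlyakhtenko rank theorem that converts such spectral information into strong $1$-boundedness. Granted strong $1$-boundedness, the non-isomorphism with any interpolated free group factor $\Ll(F_r)$ is immediate from \cite[Cor.~3.6]{Jung_OneBounded}, so the real work is to verify the hypothesis of the rank theorem for the canonical generating set $u = (u_{ij})$.

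I would begin by recalling the quantum Cayley tree of $\FO_N$ from the second author's earlier work: a Hilbert bimodule $\Kk$ of ``edges'' built from the corepresentation theory of $\FO_N$, a derivation $\partial$ from a dense polynomial $*$-subalgebra of $\Ll(\FO_N)$ into $\Kk$, and an involution $\Theta \in B(\Kk)$ implementing edge reversal. The defining relations of $\FO_N$ translate into the cocycle-type identity expressing $\partial(u_{ij})$ as an eigenvector of $\Theta$ with eigenvalue $-1$, so the generator gradients lie in $\ker(1+\Theta)$. The vanishing $\beta_1^{(2)}(\FO_N) = 0$ proven in \cite{Vergnioux_Paths} is the statement that a certain closed submodule of $\ker(1+\Theta)$ has trivial $\Ll(\FO_N)\bar\otimes\Ll(\FO_N)^{op}$-dimension, and as exploited in \cite{BrannanCollinsVergnioux} this already yields \eqref{BCV-result}, but strong $1$-boundedness requires a genuinely quantitative refinement.

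The key new ingredient is therefore a regularity statement for the spectral measure of $\Theta$ near the eigenvalue $-1$: one must control how the spectral projections of $\Theta$ on small intervals around $-1$ behave with respect to the relevant trace, upgrading the bimodule-dimension vanishing to an effective rank-type bound of the kind demanded by Jung--Shlyakhtenko. I would prove this by decomposing $\Kk$ into its isotypic components under the natural corepresentation action, which is indexed by $\Irr(\FO_N)$ with fusion rules those of $\mathrm{SU}_q(2)$, and then diagonalizing $\Theta$ block by block using the explicit Temperley--Lieb intertwiner combinatorics together with the recursion on characters. The exponential growth of the quantum dimensions should then furnish the decay near $-1$ needed to feed into the rank theorem.

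The main obstacle will be the spectral analysis itself, since passing from the qualitative $L^2$-cohomological vanishing to an effective quantitative regularity at $-1$ is a genuinely new step and requires a finer grip on the intertwiner combinatorics than what sufficed for the $L^2$-Betti number computation. Once the regularity is in hand and plugged into the Jung--Shlyakhtenko rank theorem, one extracts $1$-boundedness of $u$; finiteness of the free entropy of a single generator $u_{ij}$ (which has a compactly supported, nondegenerate scalar distribution in $[-1,1]$) then puts us in the setting of Jung's promotion theorem from \cite{Jung_OneBounded}, which converts $1$-boundedness into strong $1$-boundedness, completing the proof and, via \cite[Cor.~3.6]{Jung_OneBounded}, the non-isomorphism with $\Ll(F_r)$.
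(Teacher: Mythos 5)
Your overall architecture matches the paper's: canonical generators $u=(u_{ij})$ with the quadratic orthogonality relations $F$, the Jung--Shlyakhtenko rank theorem, the rank computation $\rk\partial F(u)=N^2-1$ via $\beta_1^{(2)}(\FO_N)=0$, the identification of $\partial F(u)^*\partial F(u)$ with $2+2\Re\Theta$ up to amplification and unitary conjugation, and the final promotion to strong $1$-boundedness using the finite free entropy of a single $u_{ij}$ (whose law has bounded density). All of that is correct and is exactly how the paper proceeds.

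The genuine gap is in the one step you defer: the determinant class property of $1+\Re\Theta$. You propose to get it by diagonalizing $\Theta$ blockwise over the isotypic decomposition via Temperley--Lieb combinatorics, with the ``exponential growth of the quantum dimensions'' supplying decay of the spectral measure near $-1$. This is not substantiated and the proposed mechanism is misleading: in the paper the relevant restriction of $\Re\Theta$ is shown to be unitarily conjugate to $-\Re(r)$ for a weighted unilateral shift $r$ whose weights $c_{k,l}\in\itv]0,1]$ converge to $1$, so the spectral measure at the edge is of \emph{semicircular} (square-root) type, not exponentially small; what saves the day is that square-root vanishing is just enough for $\log$-integrability (a Bertrand integral). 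Concretely, the missing ideas are: (i) the decomposition $K=K_g^+\oplus K_g^-\oplus K_a\oplus L$ using the reflection operator $W$, and the fact that the functional $h\otimes\Tr$ only sees the eigenspaces (where $1+\Re\Theta$ is $0$ or $2$) and the piece $L$; (ii) the explicit intertwiner $\Lambda=(1+W)(r-r^*)+2(s^*-s^{\prime*})$ with dense image in $L$, satisfying $(\Theta+\Theta^*)\Lambda=-\Lambda(r+r^*)$ and $\Lambda^*\Lambda=8-2(r+r^*)^2$, which both exhibits the shift structure and guarantees that the cyclic vectors lie in the range of $\sqrt{1-(\Re r)^2}$ (needed to control the $(1-t^2)^{-1}$ singularity at \emph{both} endpoints $\pm1$, not only at $-1$); and (iii) the moment-domination argument comparing $\Re(r)$ to the standard shift, using that the Dyck-path contributions lie in $\itv]0,1]$ and that the Taylor coefficients of $\log(1-t^2)/(1-t^2)$ are non-positive. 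Without some substitute for (i)--(iii), the assertion that the spectral measure is regular enough at $-1$ is exactly the unproven heart of the theorem.
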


\bigskip

Note that II$_1$-factors which have property Gamma, or have a Cartan subalgebra,
or are tensor products of infinite dimensional factors, are automatically
strongly $1$-bounded by \cite{Jung_OneBounded}. This is not the case of
$\Ll(\FO_N)$.  Instead, our proof of strong $1$-boundedness relies on and is
heavily inspired by recent works of Jung \cite{1602.04726} and Shlyakhtenko
\cite{Shlyakhtenko_OneBounded}.  

If $F$ is an $l$-tuple of non-commutative polynomials over $m$ variables, one
can compute Voiculescu's free derivative $\partial F$ which yields by evaluation
an operator $\partial F(X)\in M \otimes M^{op} \otimes B(\CC^m,\CC^l)$.  In
\cite{1602.04726}, Jung showed that if $(M,\tau)$ is a finite von Neumann
algebra, $X \in M_{sa}^m$ is an $m$-tuple satisfying the polynomial relations
$F(X) = 0$, then $X$ is $\alpha$-bounded with $\alpha = m-\rk(\partial F(X))$,
{\it provided} that $\partial F(X)^*\partial F(X)$ has a non-zero modified
L\"uck-Fuglede-Kadison determinant. See Section \ref{sec_notation} for any
undefined notation and terms here.

In \cite{Shlyakhtenko_OneBounded}, Shlyakhtenko gave another proof of Jung's
result above using non-microstates free entropy techniques, and moreover used
this result to show that whenever $\Gamma$ is an infinite, finitely generated
and finitely presented sofic group with vanishing first $L^2$-Betti number, then
$\Ll(\Gamma)$ is strongly $1$-bounded.  The key idea here being that there
always exists a canonical system of generators
$X \in \mathbb Q[\Gamma]_{sa}^{m} \subset \Ll(\Gamma)^m_{sa}$ and
rational-polynomial relations $F(X)=0$, where
\begin{enumerate}
\item \label{con1} $m-\rk(\partial F(X)) = \beta^{(2)}_1(\Gamma) - \beta^{(2)}_0(\Gamma)+1$.
\item \label{con2} $\partial F(X)^*\partial F(X)$ has a non-zero modified
  L\"uck-Fuglede-Kadison determinant.
\end{enumerate}   
Note that the first condition above holds for any finitely generated finitely
presented group, whereas the second, typically very difficult to check condition
comes for ``free'' for sofic groups -- thanks to Elek and Szab\'o's solution to
L\"uck's determinant conjecture for sofic groups \cite{MR2178069}.

Returning to the quantum groups $\FO_N$, it is very natural to view these
objects as quantum analogues of finitely generated, finitely presented sofic
groups with vanishing first $L^2$-Betti number. Indeed, $\FO_N$ is hyperlinear
in the sense of \cite{BrannanCollinsVergnioux}, and even residually finite in
the sense that the underlying Hopf $\ast$-algebra $\mathbb C[\FO_N]$ is
residually finite-dimensional \cite{MR3336732}. However discrete quantum groups
are much more linear in nature than ordinary discrete groups and it is not clear
whether there is a quantum analogue of soficity that would allow one to prove
L\"uck's determinant conjecture for discrete quantum group rings.

\bigskip

Our strategy in this paper for proving our strong $1$-boundedness theorem, which
now can be seen as a quantum analogue of Shlyakhtenko's sofic group result, is
to first take the canonical system of generators
$X = u = (u_{ij})_{1 \le i,j \le N}$ and form the natural vector of quadratic
relations $F(X) = 0$ associated to the defining orthogonality relations of
$\FO_N$.  We then proceed to show conditions \eqref{con1} and \eqref{con2} from
above for this choice of $F$ and $X$.  Establishing \eqref{con1} turns out to be
a relatively straightforward adaptation of the results in the group case (see
Lemma \ref{lem_rank}).

On the other hand, establishing \eqref{con2} directly turns out
to be much more involved and constitutes the main technical component of the
paper.  Without the analogue of Elek-Szab\'o's results in this setting, we must
check the determinant condition for $D = \partial F(X)^*\partial F(X)$
explicitly.  This amounts to proving the integrability of the function
$\log_+:[0,\infty) \to \mathbb R$ with respect to the spectral measure of $D$,
where $\log_+(t) = \log (t)$ if $t > 0$ and $\log_+(0)= 0$.  

This integrability condition is established by proving an identification of $D$,
up to amplification and unitary equivalence, with the operator
$2(1 + \text{Re}(\Theta))$, where $\Theta \in B(K)$ is the so-called {\it
  edge-reversing operator} of the quantum Cayley tree associated to the quantum
group $\FO_N$.  Here, $K$ denotes the {\it edge Hilbert space} associated to the quantum Cayley tree.   Quantum Cayley graphs where introduced by the second author in
\cite{Vergnioux_Cayley} and studied further in \cite{Vergnioux_Paths}, where
they were a key ingredient to prove the vanishing of the first $L^2$-Betti
number of $\FO_N$. More specifically, a large part of
\cite{Vergnioux_Cayley,Vergnioux_Paths} was devoted to the study of the
eigenspaces $K_g^{\pm} = \Ker(\Theta\pm\id)$.

In the quantum case, $\Theta$ is not involutive and the understanding of its
behavior on the orthogonal complement of $K_g^+\oplus K_g^-$ is essential for the study of
the integrability condition of $D$. In the present article, we unveil a shift
structure for the action of $\Re(\Theta)$ on the orthogonal complement of $K_g^+\oplus K_g^-$,
reducing the initial problem to an integrability question for real parts of
weighted shifts.

\bigskip

Finally, let us conclude this introduction with the following natural question:
Although we now know that $\Ll(\FO_N)$ is not isomorphic to a free group factor,
could it still be possible that $\Ll(\FO_N)$ is isomorphic to $\Ll(\Gamma)$ for
some other classical discrete group $\Gamma$?  In particular, what about
$\Gamma$ being an ICC lattice in $SL(2,\CC)$?  For such $\Gamma$, it is known
that $\Ll(\Gamma)$ is a full, strongly solid, strongly $1$-bounded II$_1$-factor
which has the HAP and the CMAP.  Note also that \cite{Houdayer_StronglySolid}
provides other examples of groups $\Gamma$ such that $\Ll(\Gamma)$ satisfies the
same properties.

% Let $\Gamma$ be an ICC lattice in $SL(2,\CC)$. It is known that $\Ll(\Gamma)$
% is a non-injective strongly solid II$_1$-factor which has the HAP and the CMAP
% \cite[Thm.~2.3]{OzawaPopa_CartanII}. It is also proved there (Prop.~2.1) that
% $\Gamma$ is not inner-amenable (in a non-standard meaning, but it is
% equivalent to the usual one for discrete ICC groups), hence $\Ll(\Gamma)$ is
% full. Moreover we have $\beta_1^{(2)}(\Gamma) = 0$
% \cite{KyedPetersenVaes}. Since lattices in semisimple Lie groups are finitely
% presented, residually finite and contain elements of infinite order (even free
% groups) \cite[Chapter~4]{MR3307755}, Shlyakhtenko's result applies and shows
% that $\Ll(\Gamma)$ is strongly $1$-bounded.

\bigskip

The remainder of the paper is organized as follows. In
Section~\ref{sec_notation} we introduce some basic notation and preliminaries
about discrete quantum groups and free entropy dimension. In
Section~\ref{sec_reversing} we proceed to the spectral analysis of the reversing
operator, reducing the determinant class question to the case of weighted
shifts. In Section~\ref{sec_free_entropy} we study the relations in $\FO_N$ from
the point of view of free entropy dimension and we prove the main
$1$-boundedness result. Finally the Appendix summarizes some background results
from \cite{Vergnioux_Cayley,Vergnioux_Paths} on quantum Cayley graphs used in Section~\ref{sec_reversing}.

\subsection*{Acknowledgments} The authors thank \'Eric Ricard and Dimitri
Shlyakhtenko for valuable conversations and encouragement.  The authors also thank the anonymous referee for thoroughly reading the manuscript and providing valuable comments and suggestions.  This research was partially supported  by NSF grant DMS-1700267.  

\section{Notation and Preliminaries}
\label{sec_notation}

Scalar products are linear on the right. We denote by $\otimes$ the tensor
product of Hilbert spaces and the minimal tensor product of \Cst algebras. We
use the leg numbering notation for elements of multiple tensor products.   The
flip operator on Hilbert spaces is denoted $\Sigma : H\otimes K \to K\otimes H$.  For example, if $H, K, L$ are Hilbert spaces, $T \in B(H \otimes K)$, $S \in B(K)$, then $T_{12} \in B(H \otimes K \otimes L)$, $S_2 \in B(H \otimes K \otimes L)$, $T_{32} \in B(L \otimes K \otimes H)$ are given by $T \otimes \id$, $\id \otimes S \otimes \id$, and $(\id \otimes \Sigma)(\id \otimes T)(\id \otimes \Sigma)$, respectively. 

Let us denote $\log_+(t) = \log(t)$ for $t>0$ and $\log_+(0)=0$. The function
$\log_+$ can be applied to positive operators using Borel functional
calculus. If $M$ is a von Neumann algebra with finite faithful normal trace
$\tau$, L\"uck's modified Fuglede-Kadison determinant of $x\in M$ is $\Delta^+_\tau(x) = \exp(\tau(\log_+(|x|))) \in [0,\infty)$. We will
say that $x\in (M,\tau)$ is of {\it determinant class} if $\Delta^+_\tau(x) >0$,
i.e. $\tau(\log_+(|x|)) > -\infty$.  Here, the quantity $\tau(\log_+(|x|))$ is computed via the Lebesgue integral
\begin{displaymath}
  \tau(\log_+(|x|)) = 
  \int_{(0,\infty)} \log(\lambda)\d\mu(\lambda) = 
  \lim_{\epsilon \to 0^+} \int_\epsilon^{\|x\|} 
  \log(\lambda)\d\mu(\lambda) \in [-\infty,\infty[  
\end{displaymath}
where $\mu$ denotes the spectral distribution of $|x|$ induced by $\tau$.

We denote $L^2(M,\tau)$ the GNS space, equipped with the natural left and right
$M$-module structures $x\hat y = \widehat{xy}$ and
$\hat yx = \widehat{yx} = Jx^*J\hat y$, where $\hat x$ denotes the image in
$L^2(M,\tau)$ of $x\in M$. We denote $M^\circ$ the opposite von Neumann algebra,
$L^2(M^\circ,\tau)$ the corresponding GNS space with left and right actions of
$M^\circ$ denoted $\hat y x = \widehat{yx}$, $x\hat y = \widehat{xy}$, where we
use the product of $M$.

\subsection{Discrete quantum groups}

We use the setting of {\em Woronowicz} \Cst algebras
\cite{Woronowicz_LesHouches}, i.e. unital \Cst algebras $A$ equipped with a
$*$-homomorphism $\Delta : A \to A\otimes A$ such that
$(\Delta\otimes\id)\Delta = (\id\otimes\Delta)\Delta$ and
$\Delta(A)(1\otimes A)$, $\Delta(A)(A\otimes 1)$ are dense in $A\otimes A$.
Woronowicz proved the existence and uniqueness of a state $h \in A^*$ such that
$(h\otimes\id)\Delta = (\id\otimes h)\Delta = h(\cdot)1$, called the Haar state
\cite{Woronowicz_Pseudogroups}. The Woronowicz \Cst algebra $(A,\Delta)$ is
called {\em reduced} if the GNS representation $\pi_h$ associated with $h$ is
faithful. Note that $(\pi_h\otimes\pi_h)\Delta$ factors through $\pi_h$
and in this way $\pi_h(A)$ is naturally a reduced Woronowicz \Cst algebra.

If $\Gamma$ is a discrete group, the full and reduced \Cst algebras
$C^*_\full(\Gamma)$, $C^*_\red(\Gamma)$ are Woronowicz \Cst algebras with
respect to the coproducts given by $\Delta(g) = g\otimes g$, where group
elements $g\in\Gamma$ are identified with the corresponding unitary elements in
$C^*_\full(\Gamma)$, $C^*_\red(\Gamma)$. In general we shall interpret
Woronowicz \Cst algebras as {\em discrete quantum group} \Cst algebras and
denote $(A,\Delta) = (C^*(\Gamma),\Delta)$, where $\Gamma$ is the discrete
quantum group associated with $(A,\Delta)$. There is always a reduced version
$C^*_\red(\Gamma)$ of $C^*(\Gamma)$, as above, as well as a full version
$C^*_\full(\Gamma)$. The von Neumann algebra of $\Gamma$ is
$\Ll(\Gamma) = C^*_\red(\Gamma)'' \subset B(\ell^2(\Gamma))$, where
$\ell^2(\Gamma)$ is the GNS space of $h$.

The main class of examples for the present article are the orthogonal free
quantum groups $\FO(Q)$
\cite{Wang_FreeProd,VanDaeleWang_Universal,Banica_ReprOrtho}, where
$Q \in GL_N(\CC)$ is a matrix such that $Q\bar Q \in \CC I_N$. The corresponding full Woronowicz C$^\ast$-algebras are defined by
generators and relations:
\begin{displaymath}
  C^*_\full(\FO(Q)) = \langle u_{ij}, 1\leq i,j\leq n \mid u~\text{unitary}, 
  Q\bar uQ^{-1} = u\rangle
\end{displaymath}
where $\bar u = (u_{ij}^*)_{ij}$, with the coproduct given on generators by
$\Delta(u_{ij}) = \sum_k u_{ik}\otimes u_{kj}$. In the case $Q = I_N$ we denote
$\FO(Q) = \FO_N$.

\begin{remark}
  In the literature, another commonly used (dual) notation for the
  C$^\ast$-algebra $C^*_\full(\FO_N)$ is $C^u(O_N^+)$, or sometimes $A_o(N)$.
  The notation $C^u(O_N^+)$ refers to the fact that this C$^\ast$-algebra can be
  viewed as a free analogue of the C$^\ast$-algebra of continuous functions on
  the real orthogonal group $O_N$.  In terms of Pontryagin duality for quantum
  groups, $O_N^+ = \widehat{\FO_N}$ is the compact dual of the discrete quantum
  group $\FO_N$ and the Fourier transform \cite{MR1059324} induces the
  identifications $C^u(O_N^+) = C^*_\full(\FO_N)$ and
  $L^\infty(O_N^+) =\pi_h(C^u(O_N^+))'' = \Ll(\FO_N)$.  Since our perspective is
  to view our objects as quantum analogues of discrete groups, we stick to the
  notation $\FO_N$.
\end{remark}

Denote by $\pi_h : C^*(\Gamma) \to B(\ell^2(\Gamma))$ the GNS representation
associated with the Haar state, with canonical cyclic vector
$\xi_0 \in \ell^2(\Gamma)$. The multiplicative unitary \cite{BaajSkandalis} of
$\Gamma$ is the unitary operator $V$ acting on
$\ell^2(\Gamma)\otimes \ell^2(\Gamma)$ and given by the formula
$V(x\xi_0\otimes y\xi_0) = \Delta(x)(1\otimes y)(\xi_0\otimes\xi_0)$ for $x$,
$y \in C^*(\Gamma)$. It satisfies the so-called pentagonal equation
$V_{12}V_{13}V_{23} = V_{23}V_{12}$. The reduced algebra
$C^*_\red(\Gamma) \subset B(\ell^2(\Gamma))$ can be recovered as the closed
linear span of the slices $(\varphi\otimes\id)(V)$, $\varphi \in B( \ell^2(\Gamma))_*$, with
coproduct $\Delta(x) = V(x\otimes 1)V^*$. Another useful operator is the polar
part of the antipode.  This is the involutive unitary $U \in B(\ell^2(\Gamma))$ given by $U(x\xi_0) = R(x)\xi_0$ ($x \in C^*(\Gamma)$), where $R:C^*(\Gamma) \to C^*(\Gamma)^{\circ}$ is the unitary antipode.

The {\em dual algebra} $c_0(\Gamma)$ can be defined as the closed linear span,
in $B(\ell^2(\Gamma))$, of the slices $(\id\otimes\varphi)(V)$,
$\varphi \in B( \ell^2(\Gamma))_*$, and equipped with the coproduct
$\Delta : c_0(\Gamma) \to M(c_0(\Gamma) \otimes c_0(\Gamma)), a \mapsto
V^*(1\otimes a) V$
(following \cite{BaajSkandalis}). It is a (not necessarily unital) Hopf-\Cst
algebra \cite{Vallin}. We have then
$V \in M(c_0(\Gamma)\otimes C^*_\red(\Gamma))$. We denote
$p_0 = (\id\otimes h)(V) \in c_0(\Gamma)$, which is also the orthogonal
projection onto $\CC\xi_0 \subset H$.

In the ``classical
case'', when $\Gamma$ is a real discrete group, on can check that
$V = \sum_{g\in\Gamma} \delta_g\otimes\pi_h(g)$, where
$\delta_g$ is the characteristic function of $\{g\}$ acting by
pointwise multiplication on $\ell^2(\Gamma)$ and $\pi_h(g)$ is the
operator of left translation by $g$. In particular $c_0(\Gamma)$ identifies
with the \Cst algebra of functions on $\Gamma$ vanishing at infinity, as the
notation suggests.

\subsection{Quantum Cayley graphs}

Let $p_1\in Z(M(c_0(\Gamma)))$ be a central projection such that $Up_1 = p_1U$
and $p_0p_1 = 0$.  The {\em quantum Cayley graph} $X$ \cite{Vergnioux_Cayley}
associated to $(\Gamma, p_1)$ is given by
\begin{itemize}
\item the vertex and edge Hilbert spaces $\ell^2(X^{(0)}) = \ell^2(\Gamma)$ and
  $\ell^2(X^{(1)}) = \ell^2(\Gamma)\otimes p_1\ell^2(\Gamma)$,
\item the vertex and edge \Cst algebras $c_0(X^{(0)}) = c_0(\Gamma)$ and
  $c_0(X^{(1)}) = c_0(\Gamma)\otimes p_1c_0(\Gamma)$, naturally represented on
  the corresponding Hilbert spaces,
\item the reversing operator
  $\Theta = \Sigma(1\otimes U)V(U\otimes U)\Sigma \in B(\ell^2(X^{(1)}))$,
\item the boundary operator
  $E = V \in B(\ell^2(X^{(1)}), \ell^2(X^{(0)})\otimes \ell^2(X^{(0)}))$.
\end{itemize}
For brevity we denote $\ell^2(X^{(0)}) = \ell^2(\Gamma) = H$ and
$\ell^2(X^{(1)}) = H\otimes p_1H = K$. The fact that $p_1$ commutes with $U$
ensures that $K$ is stable under $\Theta$ and $\Theta^*$. Using the densely
defined ``augmentation form'' $\epsilon : H \to \CC$ induced by the co-unit of $C^*_{\text{f}}(\Gamma)$, one
can also consider source and target maps $E_1 = (\id\otimes\epsilon) E$,
$E_2 : (\epsilon\otimes\id)E : K \to H$. When $p_1$ has finite rank, these are
in fact bounded operators.

In the classical case, $p_1$ is the characteristic function of a subset
$S\subset \Gamma$ such that $S^{-1} = S$ and $e\notin S$. Denoting by
$(e_g)_{g\in\Gamma}$ the canonical Hilbertian basis of $\ell^2(\Gamma)$, it is
easy to compute $\Theta(e_g\otimes e_h) = e_{gh}\otimes e_{h^{-1}}$ and
$E(e_g\otimes e_h) = e_g\otimes e_{gh}$. Hence the operators $\Theta$, $E$
encode the graph structure of the usual Cayley graph associated with
$(\Gamma,S)$, with edges given by ``source, direction'' pairs
$(g,h) \in \Gamma\times S$. Note that in the quantum case, $\Theta$ is always
unitary, but not necessarily involutive. More details about quantum Cayley
graphs, especially in the case of trees, are given in the Appendix.

\bigskip

If $\Gamma$ is a discrete group, the unitaries $v = g \in C^*_\full(\Gamma)$ or
$C^*_\red(\Gamma)$ corresponding to group elements can be recovered as those
unitaries $v$ which are {\em group-like}, i.e. satisfy the relation
$\Delta(v) = v\otimes v$. More generally, a {\em unitary corepresentation} of a
Woronowicz \Cst algebra $C^*(\Gamma)$ on a Hilbert space $H$ is a unitary
element $v \in M(K(H)\otimes C^*(\Gamma)))$ such that
$(\id\otimes\Delta)(v) = v_{12} v_{13} \in M(K(H)\otimes C^*(\Gamma)\otimes
C^*(\Gamma))$.  Here, $K(H)$ denotes the C$^\ast$-algebra of compact operators on the Hilbert space $H$.
Applying $\id\otimes\pi_h$ yields a bijection between corepresentations of
$C^*(\Gamma)$ and $C^*_\red(\Gamma)$, hence one can speak of corepresentations
of the discrete quantum group $\Gamma$. 

We denote by $\Corep(\Gamma)$ the category of finite dimensional
corepresentations of $\Gamma$. It is a rigid tensor \Cst category, with direct
sum $v\oplus w$, and tensor product $v\otimes w = v_{13}w_{23}$. The space of
$v \in \Corep(\Gamma)$ is denoted $H_v$ and we put $\dim v = \dim H_v$. We write
$v\subset w$ (resp. $v\simeq w$) if $\Hom(v,w)$ contains an injective
(resp. bijective) map, and we choose a set $\Irr(\Gamma)$ of representatives of
irreducible corepresentations up to equivalence. Any corepresentation dual to
$v$ will be denoted $\bar v$, and the quantum (or intrinsic) dimension of $v$ is
denoted $\qdim v$.  See e.g. \cite{MR3204665} for more details. 

The structure of $c_0(\Gamma)$ can be described using the theory of
corepresentations. More precisely, there is a canonical dense subspace of $H$
that can be identified with $\bigoplus_{\alpha\in\Irr\Gamma} B(H_\alpha)$ in
such a way that $c_0(\Gamma) \subset B(H)$ identifies with
$c_0{-}\bigoplus_{\alpha\in\Irr\Gamma} B(H_\alpha)$ acting on the dense subspace
by left multiplication. Moreover this gives a decomposition of the
multiplicative unitary $V$ (which is also a unitary corepresentation):
$V = \sum_{\alpha\in\Irr\Gamma} \alpha \in M(c_0(\Gamma)\otimes
C^*_\red(\Gamma))$.
We denote $p_\alpha \in c_0(\Gamma) \subset B(H)$ the minimal central projection
corresponding to the block $B(H_\alpha)$, so that $H = \bigoplus p_\alpha H$ and
$p_\alpha H\simeq B(H_\alpha)$. For the trivial corepresentation
$\tau = \id_\CC\otimes 1$ we have $p_\tau = p_0$.

\subsection{Free entropy dimension} \label{FED}

There are two main approaches to free entropy dimension, based respectively on
microstates and conjugate variables. The tools that we are going to use in this
article are more closely related to the second one, although the invariance of
strong $1$-boundedness under von Neumann algebra isomorphisms is proved by Jung in the
first framework.

\bigskip

For a tuple of indeterminates $x = (x_1, \ldots, x_m)$, we denote
$\CC\langle x\rangle$ the corresponding algebra of noncommutative
polynomials. The free difference quotient $\partial_i$ is the unique derivation
$\partial_i : \CC\langle x\rangle \to \CC\langle x\rangle\otimes \CC\langle
x\rangle$
such that $\partial_i x_j = \delta_{ij}(1\otimes 1)$, where
$\CC\langle x\rangle\otimes \CC\langle x\rangle$ is equipped with the bimodule
structure $P\cdot(R\otimes S)\cdot Q = PR\otimes SQ$. We denote
$\partial P = \sum \partial_i P \otimes e_i^* \in \CC\langle x\rangle\otimes
\CC\langle x\rangle \otimes (\CC^m)^*$
and, if $P = (P_1,\ldots,P_l) \in \CC\langle x\rangle^l$,
$\partial P = \sum \partial_i P_j \otimes e_j \otimes e_i^* \in \CC\langle
x\rangle\otimes \CC\langle x\rangle \otimes B(\CC^m,\CC^l)$.

\bigskip

Fix a tuple $X = (X_1,\ldots, X_m)$ of self-adjoint elements in a von Neumann
algebra $M$ with faithful finite normal trace $\tau$, and denote
$W^*(X) \subset M$ the von Neumann subalgebra generated by $X$. We say that
$\xi_i \in L^2(M,\tau)$ is the (necessarily unique) conjugate variable of $X_i$
if $\xi_i \in L^2(W^*(X),\tau)$ and
$\langle \xi_i, P(X)\rangle = (\tau\otimes\tau) ((\partial_i P)(X))$ for all
$P \in \CC\langle x\rangle$. The free Fisher information of $X$ is
$\Phi^*(X) = \sum_i \|\xi_i\|_2^2$ if all conjugate variables exist, and $+\infty$
otherwise.

Replacing $M$ by a free product if necessary, one can assume that $M$ contains a
free family $S = (S_1,\ldots,S_m)$ of elements with $(0,1)$-semicircular law
with respect to $\tau$, which is also freely independent from $X$. The
non-microstates free entropy \cite{Voiculescu_V} is defined by
\begin{displaymath}
  \chi^*(X) = {\ts\frac 12} \int_0^{+\infty} \left({\ts\frac m{1+t}} - 
    \Phi^*(X+\sqrt t S)\right)\d t + {\ts\frac m2}\log(2\pi e)
  \in\itv[{-\infty},{+\infty}[,
\end{displaymath}
and the non-microstates free entropy dimension is 
\begin{displaymath}
  \delta^*(X) = m - \liminf_{\epsilon \to 0} 
  \frac{\chi^*(X+\sqrt\epsilon S)}{\log\sqrt\epsilon}.
\end{displaymath}
The (modified) microstates free entropy dimension $\delta_0(X)$ is defined by
the very same formula, using the relative microstates free entropy
$\chi(X+\sqrt\epsilon S:S)$ instead of $\chi^*(X+\sqrt\epsilon S)$ \cite{MR1371236}.

\bigskip

One can observe that we have $\delta_0(X) \leq \alpha$ {\bf iff}
$\chi(X+\sqrt\epsilon S:S) \leq (\alpha-m)|\log\sqrt \epsilon| +
o(\log\sqrt\epsilon)$
as $\epsilon \to 0$. Following Jung \cite{Jung_OneBounded}, one says that $X$ is
{\it $\alpha$-bounded} (for $\delta_0$) if it satisfies the slightly stronger
condition $\chi(X+\sqrt\epsilon S:S) \leq (\alpha-m)|\log\sqrt \epsilon| + K$
for small $\epsilon>0$ and some $K$ independent of $\epsilon$. Similarly, one
can say that $X$ is $\alpha$-bounded for $\delta^*$ if
$\chi^*(X+\sqrt\epsilon S) \leq (\alpha-m)|\log\sqrt \epsilon| + K$.

Recall that it is a major open question in free probability theory to decide whether
$\delta_0(X)$ is an invariant of $W^*(X)$.  Indeed, $\Ll(F_m)$ admits a tuple
of generators $X$ such that $\delta_0(X) = m$ \cite{MR1887698}, and therefore the W$^\ast$-isomorphism invariance of $\delta_0$ would provide a solution to the celebrated free group factor isomorphism problem.  Jung proved the following very
strong result: if $X$ is $1$-bounded and $\chi(X_i)>-\infty$ for at least one
$i$, then any other tuple $X'$ of self-adjoint generators of $W^*(X)$ is
$1$-bounded \cite{Jung_OneBounded}. In particular, in that case one cannot have
$W^*(X) \simeq \Ll(F_m)$ for $m\geq 2$. Let us also record the following deep
result comparing the two versions of free entropy: we always have
$\chi(X) \leq \chi^*(X)$ \cite{BianeCapitaineGuionnet}. In particular
$\chi(X+\sqrt\epsilon S:S) \leq \chi^*(X+\sqrt\epsilon S)$ so that
$1$-boundedness for $\delta^*$ implies $1$-boundedness for $\delta_0$.

\bigskip

Our main tool in this article is the following result, originally proved by Jung
in the microstates framework \cite{1602.04726}, and reproved by Shlyakhtenko
using non-microstates free entropy \cite{Shlyakhtenko_OneBounded}.  As above,
for any $P \in \CC\langle x\rangle^l$ and $X \in M^m_{sa}$ one can consider
$\partial P \in \CC\langle x\rangle \otimes \CC\langle x\rangle \otimes
B(\CC^m,\CC^l)$
and $\partial P(X) \in M \otimes M^\circ \otimes B(\CC^m,\CC^l)$ is a bounded
operator from $L^2(M,\tau)\otimes L^2(M^\circ,\tau)\otimes \CC^m$ to
$L^2(M,\tau)\otimes L^2(M^\circ,\tau)\otimes \CC^l$. The operator
$\partial P(X)$ moreover respects the right $M\otimes M^\circ$-module structures
given by
$(\zeta\otimes\xi\otimes\eta)\cdot (x\otimes y) = \zeta x\otimes y\xi \otimes
\eta$.
We denote by $\rk(\partial P(X))$ the Murray-von Neumann dimension over
$M\otimes M^\circ$ of the closure of $\Img (\partial P(X))$ in
$L^2(M,\tau)\otimes L^2(M^\circ,\tau)\otimes \CC^l$.

\begin{theorem} \label{thm_jung_shlyakhtenko} {\rm
    (\cite[Thm.~6.9]{Jung_OneBounded} and
    \cite[Thm.~2.5]{Shlyakhtenko_OneBounded})} Suppose that $X \in M_{sa}^m$
  satisfies the identity $F(X) = 0$ for $F \in \CC\langle x\rangle^l$. Assume
  moreover that $\partial F(X)$ is of determinant class. Then $X$ is
  $\alpha$-bounded for $\delta_0$ and $\delta^*$, with
  $\alpha = m-\rk(\partial F(X))$.
\end{theorem}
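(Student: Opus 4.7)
The plan is to follow the non-microstates approach of Shlyakhtenko: establish a lower bound on the perturbed free Fisher information $\Phi^*(X+\sqrt\epsilon S)$ that, via the integral formula for $\chi^*$, translates into the upper bound $\chi^*(X+\sqrt\epsilon S) \leq (\alpha-m)|\log\sqrt\epsilon| + K$ required for $\alpha$-boundedness for $\delta^*$; the Biane-Capitaine-Guionnet inequality $\chi \leq \chi^*$ then automatically yields $\alpha$-boundedness for $\delta_0$. Concretely, let $S=(S_1,\ldots,S_m)$ be a free $(0,1)$-semicircular family, freely independent from $X$ inside an enlargement of $(M,\tau)$, and set $Y_\epsilon = X+\sqrt\epsilon S$. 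The conjugate variable of the $i$-th coordinate of $Y_\epsilon$ is $\xi_i = \epsilon^{-1/2}\, E_{W^*(Y_\epsilon)}[S_i]$, yielding the trivial bound $\Phi^*(Y_\epsilon) \leq m/\epsilon$. The rigidity imposed by $F(X)=0$ should force this bound to be nearly saturated; concretely, the target is
\[
  \Phi^*(Y_\epsilon) \geq \frac{r}{\epsilon} - C, \qquad r = \rk(\partial F(X)),
\]
for $C$ independent of small $\epsilon$, which integrates to the stated bound on $\chi^*$ with $m-\alpha = r$.

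To establish the lower bound, I would exploit the Taylor-type expansion
\[
  F(Y_\epsilon) = \sqrt\epsilon\,\partial F(X)\cdot S + O(\epsilon),
\]
where $\partial F(X)\cdot S$ denotes the evaluation of $\partial F(X) \in M\otimes M^\circ \otimes B(\CC^m,\CC^l)$ on $S\in L^2(M)^m$ via the natural bimodule action $(a\otimes b)\cdot s = asb$. Since $F(Y_\epsilon)$ lives in $W^*(Y_\epsilon)^l$, a regularized inversion of $\partial F(X)$ on its range allows one to approximately reconstruct the component of $S$ in $\overline{\Img \partial F(X)^*}$ from elements accessible inside $W^*(Y_\epsilon)$. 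Combined with the conjugate variable identity $\langle\xi_i, P(Y_\epsilon)\rangle = (\tau\otimes\tau)(\partial_i P(Y_\epsilon))$ applied to polynomial test vectors built from $F(Y_\epsilon)$ and $\partial F(Y_\epsilon)$, a Cauchy-Schwarz argument produces a lower bound for $\Phi^*(Y_\epsilon)$ whose leading coefficient is the Murray-von Neumann dimension of $\overline{\Img \partial F(X)^*}$, namely $r$.

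The main technical obstacle is regularizing the pseudo-inverse of $D = \partial F(X)$ uniformly in $\epsilon$. Using $(D^*D+\epsilon)^{-1}D^*$ in place of the possibly unbounded Moore-Penrose pseudo-inverse, the error produced by the regularization is controlled by an integral over the spectrum of $D^*D$ that converges as $\epsilon\to 0$ precisely when the spectral measure of $|D|$ makes $\log$ integrable near zero -- which is exactly the determinant class hypothesis on $\partial F(X)$. Without this spectral regularity near zero, the error term would diverge and destroy the rank-based bound; this is where the determinant class assumption enters crucially in the proof, and it is also the reason the theorem cannot dispense with it.
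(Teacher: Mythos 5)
The paper does not prove this theorem itself: it is quoted verbatim from Jung and Shlyakhtenko, and your outline is essentially Shlyakhtenko's non-microstates argument (Taylor-expand $F(X+\sqrt\epsilon S)$ to see $\sqrt\epsilon\,\partial F(X)\# S$ inside $W^*(X+\sqrt\epsilon S)$, recover the component of $S$ along $\overline{\Img\,\partial F(X)^*}$ via a regularized inverse, and use the determinant class hypothesis to control the regularization error uniformly in $\epsilon$), so it takes the same approach as one of the two cited proofs. The sketch is sound in its architecture, including the correct identification of where log-integrability of the spectral measure of $|\partial F(X)|$ near $0$ is used.
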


\section{Regularity of the reversing operator}
\label{sec_reversing}

In Section~\ref{sec_free_entropy} we will prove that $\Ll(\FO_N)$ is strongly
$1$-bounded by applying Theorem~\ref{thm_jung_shlyakhtenko} to the tuple $X$ of
canonical generators and a specific vector of relations $F$. It will turn out
that the real part of the operator $\partial F(X)$ is closely related to the
real part of the reversing operator $\Theta$ of the quantum Cayley graph of
$\FO_N$ with its canonical generators. In this section we prove the crucial
technical result that $1+\Re\Theta$ is of determinant class --- which is a
regularity property for the spectral measure of $\Re\Theta$ at the edge of the spectrum.  This result can be seen as further evidence that the quantum groups $\FO_N$ should be somehow regarded as quantum analogues of sofic or determinant class groups.

Note that all results in this section hold also in the non-Kac case, that is,
for all discrete quantum groups $\FO(Q)$ with $Q \in GL_N(\CC)$, $N\geq 2$,
$Q\bar Q\in\CC I_N$, except the ones isomorphic to the duals of $SU_{\pm 1}(2)$
--- which corresponds to the assumption $\qdim u > 2$.

\bigskip

Our study relies heavily on results about quantum Cayley graphs proved in
\cite{Vergnioux_Cayley,Vergnioux_Paths}, which we recall in the Appendix. Note
that the eigenspace $K_g^+ = \Ker(\Theta+\id)$ --- and, by symmetry
$K_g^{-}=\Ker(\Theta-\id)$ ---, were the main subject of study in
\cite{Vergnioux_Cayley,Vergnioux_Paths}. These stable subspaces behave trivially
with respect to the determinant class issue. Note also that in the classical
case, they span the whole of the ambient edge Hilbert space $K$, but not in the case of
$\FO_N$. Hence our main concern in the present article is the behavior of
$\Theta$ on $K_g^{+\bot}\cap K_g^{-\bot}$. 

Recall the definition~\ref{app_def_W} of the reflection operator $W$, which is
isometric and involutive. The study of $K_g^+$ in \cite{Vergnioux_Cayley} shows
that $W$ restricts to the identity on $K_g^+$ and $K_g^-$. More precisely, the
proof of \cite[Theorem~5.3]{Vergnioux_Cayley} shows that any vector
$\xi\in K_g^+$ can be written
$\xi = \zeta -(1+W)\eta + p_{\ss--}\Theta(\eta-\zeta)$ with $\zeta\in K_{\ss++}$
and $\eta\in K_{\ss+-}$, and $W$ restricts to the identity on $K_{\ss++}$ and
$K_{\ss--}$ by definition.
\begin{definition}
  We denote $K_s=\Ker(W-1)$, $K_a = \Ker(W+1)$ and
  $L = K_s\cap K_g^{+\bot}\cap K_g^{-\bot}$. We have then an orthogonal
  decomposition $K = K_g^+\oplus K_g^-\oplus K_a\oplus L$.
\end{definition}

The structure of $K_a$ and the behavior of $\Theta+\Theta^*$ on $K_a$ are quite
simple and we describe them in the next Proposition. We use the notation for the
left/right ascending/descending subspaces, e.g.  $K_{\ss+-} = p_{\ss+-}K$, which
is recalled in the Appendix.

\begin{proposition}
  We have $K_a \subset K_{\ss+-}\oplus K_{\ss-+}$ and the orthogonal projection
  onto $K_{\ss+-}$ restricts to an isomorphism $K_a\simeq K_{\ss+-}$ (up to a
  constant $\sqrt 2$). Moreover $K_a$ is $(\Theta+\Theta^*)$-stable and in the
  isomorphism with $K_{\ss+-}$ the operator $\Theta+\Theta^*$ corresponds to
  $-(r+r^*)$, where $r = -p_{\ss+-}\Theta p_{\ss+-}$.
\end{proposition}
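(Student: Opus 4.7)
The plan is to proceed in three stages: first establishing $K_a\subset K_{\ss+-}\oplus K_{\ss-+}$ and the claimed isomorphism with $K_{\ss+-}$, then deducing $(\Theta+\Theta^*)$-stability from a commutation relation with $W$, and finally identifying the induced operator by an explicit block computation.

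For the first stage, since $W$ restricts to the identity on $K_{\ss++}$ and $K_{\ss--}$ as recalled just above the Definition, both of these subspaces lie in $K_s=\Ker(W-\id)$; orthogonality of the $\pm 1$-eigenspaces of the self-adjoint involution $W$ then forces $K_a\subset K_{\ss+-}\oplus K_{\ss-+}$. From the appendix, $W$ swaps $K_{\ss+-}$ and $K_{\ss-+}$ through an isometric bijection $w:K_{\ss+-}\to K_{\ss-+}$ with $W\xi=w\xi$ for $\xi\in K_{\ss+-}$ and $W\eta=w^*\eta$ for $\eta\in K_{\ss-+}$. Its $-1$-eigenspace inside $K_{\ss+-}\oplus K_{\ss-+}$ is then $\{\xi-w\xi:\xi\in K_{\ss+-}\}$, and $p_{\ss+-}$ sends $\xi-w\xi$ bijectively to $\xi$; from $\|\xi-w\xi\|^2=2\|\xi\|^2$ one reads off the isomorphism up to the constant $\sqrt{2}$.

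For the second stage, the appendix provides the intertwining relation $W\Theta W=\Theta^*$. Adding this to its adjoint $W\Theta^*W=\Theta$ yields $W(\Theta+\Theta^*)=(\Theta+\Theta^*)W$, so $\Theta+\Theta^*$ commutes with $W$ and preserves each of its spectral subspaces; in particular $K_a$ is $(\Theta+\Theta^*)$-stable.

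For the third stage, take $\eta=\xi-w\xi=(\id-W)\xi$ with $\xi\in K_{\ss+-}$. The commutation from the second stage gives
\[
(\Theta+\Theta^*)\eta=(\id-W)(\Theta+\Theta^*)\xi.
\]
Setting $\zeta=(\Theta+\Theta^*)\xi$ and decomposing it along the four blocks, the $K_{\ss+-}$-component of $(\id-W)\zeta$ equals $\zeta_{\ss+-}-w^*\zeta_{\ss-+}$. Since $\xi\in K_{\ss+-}$, we have $\zeta_{\ss+-}=p_{\ss+-}(\Theta+\Theta^*)p_{\ss+-}\xi=-(r+r^*)\xi$ by the definition of $r$, so the claimed identification of $(\Theta+\Theta^*)|_{K_a}$ with $-(r+r^*)$ reduces to the off-diagonal vanishing $p_{\ss-+}(\Theta+\Theta^*)p_{\ss+-}=0$, equivalently $p_{\ss-+}\Theta p_{\ss+-}+(p_{\ss+-}\Theta p_{\ss-+})^*=0$. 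This identity, which asserts that $\Theta+\Theta^*$ has no cross-block component between $K_{\ss+-}$ and $K_{\ss-+}$, is the main technical obstacle; I would derive it from the finer block structure of $\Theta$ with respect to the ascending/descending decomposition of $K$ that is recorded in the appendix, going beyond what the bare intertwining $W\Theta W=\Theta^*$ provides on its own.
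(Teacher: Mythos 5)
Your route coincides with the paper's own: the inclusion $K_a\subset K_{\ss+-}\oplus K_{\ss-+}$ and the $\sqrt2$-isomorphism come from the description of $W$ on the four blocks, stability of $K_a$ comes from $W\Theta W=\Theta^*$ forcing $[W,\Theta+\Theta^*]=0$, and the identification of the induced operator is a block computation. However, you stop short at the only substantive input, namely the vanishing of the cross-blocks $p_{\ss-+}\Theta p_{\ss+-}$ and $p_{\ss+-}\Theta p_{\ss-+}$ (hence of $p_{\ss-+}(\Theta+\Theta^*)p_{\ss+-}$), which you explicitly flag as the main obstacle and leave as a promise. As written the argument is therefore incomplete at precisely its one nontrivial point.

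The missing step is a one-line consequence of Proposition~\ref{app_prop_orient}, which is exactly the citation the paper uses. That proposition gives $\Theta p_{\ss+\jok}p_n=p_{n+1}p_{\ss\jok-}\Theta$ for all $n$, so $\Theta p_{\ss+-}=\Theta p_{\ss+\jok}p_{\ss\jok-}$ has range inside $p_{\ss\jok-}K$; since $p_{\ss-+}=p_{\ss-\jok}p_{\ss\jok+}$ and $p_{\ss\jok+}p_{\ss\jok-}=0$, this yields $p_{\ss-+}\Theta p_{\ss+-}=0$. Symmetrically, $\Theta p_{\ss-\jok}p_n=p_{n-1}p_{\ss\jok+}\Theta$ gives $p_{\ss+-}\Theta p_{\ss-+}=0$, and taking adjoints disposes of the $\Theta^*$ terms. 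With this in hand your third stage closes: $\zeta_{\ss-+}=p_{\ss-+}(\Theta+\Theta^*)p_{\ss+-}\xi=0$ and the $K_{\ss+-}$-corner is $-(r+r^*)$ by the definition of $r$. Note that the paper phrases the endgame a little more economically: once $K_a$ is known to be $(\Theta+\Theta^*)$-stable and contained in $K_{\ss+-}\oplus K_{\ss-+}$, the cross-block vanishing immediately gives $\Theta+\Theta^*=p_{\ss+-}(\Theta+\Theta^*)p_{\ss+-}+p_{\ss-+}(\Theta+\Theta^*)p_{\ss-+}$ on $K_a$, so there is no need to track the term $w^*\zeta_{\ss-+}$ by hand.
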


\begin{proof}
  Since by definition $W$ restricts to the identity on $K_{\ss++}$ and
  $K_{\ss--}$ and switches $K_{\ss+-}$ and $K_{\ss-+}$ in an involutive and
  isometric way, the first two assertions are clear. The identity
  $W\Theta W=\Theta^*$ implies $[W,\Theta+\Theta^*]=0$ hence $K_a$ and $K_s$ are
  $(\Theta+\Theta^*)$-stable. Due to this stability and the inclusion
  $K_a \subset K_{\ss+-}\oplus K_{\ss-+}$ we have
  $\Theta+\Theta^* = (p_{\ss+-}+p_{\ss-+}) (\Theta+\Theta^*)
  (p_{\ss+-}+p_{\ss-+})$ on $K_a$.
  Since $p_{\ss+-}\Theta p_{\ss-+} = p_{\ss-+}\Theta p_{\ss+-} = 0$ by
  \ref{app_prop_orient} this yields
  \begin{displaymath}
    \Theta+\Theta^* = p_{\ss+-}(\Theta+\Theta^*)p_{\ss+-} + 
    p_{\ss-+}(\Theta+\Theta^*)p_{\ss-+} \text{~on~} K_a,
  \end{displaymath}
  and the last assertion follows.
\end{proof}

Note that the operator $r$ on $K_{\ss+-}$ was studied in
\cite{Vergnioux_Cayley}, and it is an infinite direct sum of right shifts with
explicit weights converging to $1$. Note however that we will be interested in
vector states corresponding to vectors in $K_{\ss++}$ whereas
$K_a \bot K_{\ss++}$, so that the behavior of $\Theta+\Theta^*$ on $K_a$ is not
relevant for our precise analytical issue.

Now we turn to the study of $\Theta+\Theta^*$ on $L$. It turns out that it also
behaves like the real part of a shift, but the study is slightly more
involved. Recall the shorthand notation $r = -p_{\ss+-}\Theta p_{\ss+-}$,
$s = p_{\ss+-}\Theta p_{\ss++}$ and $s' = p_{\ss+-}\Theta^*p_{\ss--}$.

\begin{proposition} 
  \label{prop_descr_L}
  Consider the map
  $\Lambda = (1+W)(r-r^*) + 2 (s^*-s^{\prime *}) : K_{\ss+-} \to K$. Then
  $\Lambda$ is injective, $\overline{\Img\Lambda} = L$ and
  $\Lambda^*\Lambda =8-2(r+r^*)^2$.
\end{proposition}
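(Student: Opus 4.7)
The plan is to establish the three claims by a direct algebraic computation in the four-corner decomposition of $K$, using three inputs: the vanishing $p_{\ss+-}\Theta p_{\ss-+} = p_{\ss-+}\Theta p_{\ss+-} = 0$ from Proposition~\ref{app_prop_orient}; the relations $W^2 = 1$, $W^* = W$, $W\Theta W = \Theta^*$, together with the fact that $W$ fixes $K_{\ss++}$ and $K_{\ss--}$ pointwise while exchanging $K_{\ss+-}$ and $K_{\ss-+}$; and the explicit description $\xi = \zeta - (1+W)\eta + p_{\ss--}\Theta(\eta - \zeta)$ (with $\zeta\in K_{\ss++}$, $\eta\in K_{\ss+-}$) of a generic element of $K_g^+$ recalled from \cite[Theorem~5.3]{Vergnioux_Cayley}. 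First I would verify $\Img\Lambda \subseteq L$: the component $(1+W)(r-r^*)\eta'$ lies in $K_s = \Ker(W-1)$, while $s^*\eta'\in K_{\ss++}$ and $s'^*\eta'\in K_{\ss--}$ are fixed by $W$ as well, so $\Img\Lambda\subseteq K_s$. The orthogonality $\Lambda\eta'\perp K_g^+$ follows by pairing $\Lambda\eta'$ with the generic $K_g^+$-vector above, expanding via the adjoint relations $r^* = -p_{\ss+-}\Theta^* p_{\ss+-}$, $s^* = p_{\ss++}\Theta^* p_{\ss+-}$, $s'^* = p_{\ss--}\Theta p_{\ss+-}$, and verifying term-by-term cancellation; the orthogonality to $K_g^-$ is obtained by the same scheme applied to the corresponding characterization of $K_g^-$.

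Next I would compute $\Lambda^*\Lambda$ on $K_{\ss+-}$. Since $(1+W)(r-r^*)$ has image in $K_{\ss+-}\oplus K_{\ss-+}$ while $2(s^*-s'^*)$ has image in $K_{\ss++}\oplus K_{\ss--}$, corner orthogonality kills the cross terms. Using $(1+W)^2 = 2(1+W)$ and the isometric bijection $W : K_{\ss+-}\to K_{\ss-+}$, one obtains
\begin{displaymath}
\Lambda^*\Lambda = -2(r-r^*)^2 + 4(ss^* + s's'^*) \quad\text{on } K_{\ss+-}.
\end{displaymath}
Matching this with $8 - 2(r+r^*)^2$ reduces to the single identity
\begin{displaymath}
ss^* + s's'^* = 2\,\id_{K_{\ss+-}} - rr^* - r^*r,
\end{displaymath}
which I would derive by expanding both $p_{\ss+-}\Theta\Theta^* p_{\ss+-}$ and $p_{\ss+-}\Theta^*\Theta p_{\ss+-}$ in the corner decomposition, using the orientation vanishings to eliminate several blocks, and summing the two resulting expressions; unitarity of $\Theta$ then yields exactly the required formula.

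For injectivity, $\Ker\Lambda$ is the $4$-eigenspace of $(r+r^*)^2$, i.e.\ the $\pm 2$-eigenspace of $r+r^*$; since $r$ is realised in the Appendix as an infinite direct sum of weighted shifts with weights strictly less than one, $r+r^*$ carries no such eigenvector and $\Lambda$ is injective. For the density $\overline{\Img\Lambda}=L$, the inclusion $\overline{\Img\Lambda}\subseteq L$ is already in hand from step~1; I would prove the reverse by taking $\nu\in L$ with $\nu\perp\Img\Lambda$, hence $\Lambda^*\nu = 0$, and then use the four-block form of $\Lambda^*$ combined with the characterization of $L$ as $K_s\ominus(K_g^+\oplus K_g^-)$ to force $\nu = 0$.

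The principal obstacle is the identity $ss^* + s's'^* = 2\,\id - rr^* - r^*r$, which is the algebraic core of the statement and captures a genuinely quantum phenomenon --- in the classical case $s = s' = 0$ and $r$ is itself unitary, making the relation tautological. Keeping the corner bookkeeping clean in the four-block form of $\Theta$, while simultaneously exploiting the orientation vanishings and the compatibility $W\Theta W = \Theta^*$, is where most of the work lies. A secondary delicate point is the density in step~3, since $L$ is only described abstractly as an orthogonal complement and no a priori dimensional control is available; carrying it out will again rest on the same corner and orientation machinery.
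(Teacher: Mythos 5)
Your computation of $\Lambda^*\Lambda$ is correct and is essentially the paper's: the cross terms vanish by corner orthogonality, $(1+W)^2=2(1+W)$ handles the $K_{\ss+-}\oplus K_{\ss-+}$ block, and your key identity $ss^*+s^{\prime}s^{\prime *}=2\,\id-rr^*-r^*r$ is exactly the sum of the two relations $ss^*+rr^*=p_{\ss+-}$ and $s^{\prime}s^{\prime *}+r^*r=p_{\ss+-}$, recorded in the Appendix as Equations~\eqref{app_lem_eq_+-+-} and~\eqref{app_lem_eq_+-+-prime} and derived there just as you propose (expand $\Theta\Theta^*$ and $\Theta^*\Theta$ between two copies of $p_{\ss+-}$ and kill the remaining corners via Proposition~\ref{app_prop_orient}). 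Your injectivity argument is a legitimate alternative: given the formula for $\Lambda^*\Lambda$, a vector of $\Ker\Lambda$ would be an eigenvector of $r+r^*$ at $\pm2$, excluded because the weights $c_{k,l}$ of $r$ on $K_{\ss+-}$ satisfy $c_{k,l}<1$ there (since $l\geq 1$ on $K_{\ss+-}$). The paper's route is more direct --- $p_{\ss++}\Lambda=2s^*$ and $s^*$ is injective on $K_{\ss+-}$ for the same underlying reason, namely $q_0K_{\ss+-}=\{0\}$ --- but both work.

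The genuine gap is the identification $\overline{\Img\Lambda}=L$, which is the heart of the statement. For the inclusion $\overline{\Img\Lambda}\subseteq L$ you propose to pair $\Lambda\eta'$ against the parametrization $\xi=\zeta-(1+W)\eta+p_{\ss--}\Theta(\eta-\zeta)$ of $K_g^+$ and against ``the corresponding characterization of $K_g^-$''; no such parametrization of $K_g^-$ is recalled in the paper, and this route is in any case a long detour. For the reverse inclusion --- the substantive half --- you only say that $\Lambda^*\nu=0$ together with $\nu\in K_s\ominus(K_g^+\oplus K_g^-)$ should ``force $\nu=0$'', which restates the claim without proving it. The missing idea is the identity $K_g^+\oplus K_g^-=\Ker(\Theta+\id)\oplus\Ker(\Theta-\id)=\Ker(\Theta-\Theta^*)$ (using unitarity of $\Theta$), which reduces both inclusions to a single equivalence: for $\zeta\in K_s$, one has $\Lambda^*\zeta=0$ if and only if $\Theta\zeta=\Theta^*\zeta$. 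This is proved by splitting $\Theta\zeta=\Theta^*\zeta$ into its four corner components: the $p_{\ss++}$ and $p_{\ss--}$ components hold automatically on $K_s$ (using $W\zeta=\zeta$, $W\Theta W=\Theta^*$ and the weak involutivity of Proposition~\ref{app_prop_special}); the $p_{\ss-+}$ component is $W$-conjugate to the $p_{\ss+-}$ one; and the $p_{\ss+-}$ component reads $-r\zeta+s\zeta=-r^*\zeta+s^{\prime}\zeta$, which is precisely $\Lambda^*\zeta=0$. Without this step neither inclusion of $\overline{\Img\Lambda}=L$ is actually established, so as written your proposal does not prove the density claim.
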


\begin{proof}
  We note that $s^* = p_{\ss++}\Theta^* p_{\ss+-}$ is injective on $K_{\ss+-}$:
  indeed the weights $s_{k,l}$ appearing in \ref{app_weights} vanish only for
  $l=0$, and $q_0 K_{\ss+-} = \{0\}$. In particular $p_{\ss++}\Lambda = 2s^*$ is
  injective, hence $\Lambda$ is injective.

  It is clear from the definitions that $L$ and $\overline{\Im\Lambda}$ are
  subspaces of $K_s$. Hence we have $\overline{\Im\Lambda} = L$ {\bf iff}
  $\Ker\Lambda^* \cap K_s = L^\bot\cap K_s = K_g^+\oplus K_g^-$. But we have
  $K_g^+\oplus K_g^- \subset K_s$ and
  $K_g^+\oplus K_g^- = \Ker(\Theta-\id) \oplus \Ker(\Theta+\id) =
  \Ker(\Theta^2-\id) = \Ker(\Theta-\Theta^*)$.
  Hence it suffices to prove that $\Lambda^*(\zeta) = 0$ $\Leftrightarrow$
  $\Theta\zeta = \Theta^*\zeta$ for $\zeta \in K_s$. The second identity is
  equivalent to the four equations obtained by applying $p_{\ss++}$,
  $p_{\ss+-}$, $p_{\ss-+}$ and $p_{\ss--}$. 

  Since $\zeta = W\zeta$, the equations
  $p_{\ss++}\Theta\zeta = p_{\ss++}\Theta^*\zeta$ and
  $p_{\ss--}\Theta\zeta = p_{\ss--}\Theta^*\zeta$ are trivial --- indeed we have
  e.g. for the first one:
  \begin{align*}
    p_{\ss++}\Theta\zeta &= p_{\ss++}p_{\ss\jok+}\Theta\zeta 
    = p_{\ss++}\Theta p_{\ss-+}\zeta + p_{\ss++}\Theta p_{\ss--}\zeta 
    \quad \text{by Proposition~\ref{app_prop_orient}} \\
    &= p_{\ss++}\Theta p_{\ss-+}W\zeta + p_{\ss++}\Theta p_{\ss--}\zeta \\
    &= p_{\ss++}\Theta^* p_{\ss+-}\zeta + p_{\ss++}\Theta^* p_{\ss--}\zeta 
    \quad \text{by Propositions~\ref{app_def_W} and~\ref{app_prop_special}} \\
      &= p_{\ss++}\Theta^* p_{\ss\jok-}\zeta = p_{\ss++}\Theta^* \zeta
    \quad \text{by Proposition~\ref{app_prop_orient}.}
  \end{align*}
  Moreover the equations $p_{\ss+-}\Theta\zeta = p_{\ss+-}\Theta^*\zeta$ and
  $p_{\ss-+}\Theta\zeta = p_{\ss-+}\Theta^*\zeta$ are equivalent because
  $p_{\ss+-}\Theta W\zeta = Wp_{\ss-+}\Theta^*\zeta$ and
  $p_{\ss+-}\Theta^* W\zeta = Wp_{\ss-+}\Theta\zeta$.  Finally the equation
  $p_{\ss+-}\Theta\zeta = p_{\ss+-}\Theta^*\zeta$ reads
  $p_{\ss+-}\Theta p_{\ss+-}\zeta +p_{\ss+-}\Theta p_{\ss++}\zeta =
  p_{\ss+-}\Theta^* p_{\ss+-}\zeta + p_{\ss+-}\Theta^* p_{\ss--}\zeta$,
  i.e. $-r\zeta+s\zeta = -r^*\zeta+s'\zeta$, which is equivalent to
  $\Lambda^*\zeta = 0$ since $\zeta = W\zeta$.

  Finally we can compute, using Equations~\eqref{app_lem_eq_+-+-}
  and~\eqref{app_lem_eq_+-+-prime} which read respectively
  $ss^*+rr^* = p_{\ss+-}$ and $s's^{\prime *}+r^*r = p_{\ss+-}$:
  \begin{align*}
    \Lambda^*\Lambda &=
    \Lambda^*p_{\ss++}\Lambda + \Lambda^*p_{\ss+-}\Lambda + 
    \Lambda^*p_{\ss-+}\Lambda + \Lambda^*p_{\ss--}\Lambda \\
    &= 4 ss^* + (r^*-r)(r-r^*) + (r^*-r)(r-r^*) + 4s's^{\prime *} \\ 
    &= 2 (r^*-r)(r-r^*) + 4(\id - rr^*) + 4(\id - r^*r)
    = 8 - 2 (r+r^*)^2.
  \end{align*}
\end{proof}

Recall that $r$ is a direct sum of right shifts with weights
$c_{k,l}\in\itv[0,1]$ converging to $1$ as $k\to\infty$. In particular one sees
that $\|r+r^*\|=2$ so that $0 \in \mathrm{Sp}(\Lambda^*\Lambda)$ and the image
of $\Lambda$ is not closed. Denoting $\Kk$ the ``canonical dense subspace of
$K$'', i.e. the algebraic direct sum of the subspaces $p_n K$, we clearly have
$\Lambda(K_{\ss+-}\cap \Kk) \subset \Kk$ hence $L\cap\Kk$ is a dense subspace of $L$.

\begin{proposition}   \label{prop_descr_Theta}
  There exists an isomorphism $\Upsilon : K_{\ss+-} \to L$ and vectors
  $e_i \in q_1p_1K_{\ss+-}$ such that
  $\Upsilon^* (\Theta+\Theta^*)\Upsilon = -(r+r^*)$ and
  $(h\otimes\Tr)(\Upsilon T \Upsilon^*) = \sum (f_i|Tf_i)$, where
  $f_i = (8-2(r+r^*)^2)^{-1/2}e_i$.
\end{proposition}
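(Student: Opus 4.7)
The plan is to construct $\Upsilon$ as the partial isometry in the polar decomposition of the operator $\Lambda$ from Proposition~\ref{prop_descr_L}. By that proposition, $\Lambda : K_{\ss+-} \to K$ is injective with $\overline{\Img\Lambda} = L$ and $\Lambda^*\Lambda = 8-2(r+r^*)^2$; the injectivity of $\Lambda$ forces $|\Lambda|$ to be injective and hence to have dense range in $K_{\ss+-}$, so writing $\Lambda = \Upsilon|\Lambda|$ produces a unitary isomorphism $\Upsilon : K_{\ss+-} \to L$.

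The main intermediate step is to establish the intertwining identity
\[
(\Theta+\Theta^*)\Lambda = -\Lambda(r+r^*).
\]
I would expand $\Lambda = (1+W)(r-r^*) + 2(s^*-s'^*)$ and compute both sides term by term, using $W\Theta W = \Theta^*$, the orientation properties from Proposition~\ref{app_prop_orient}, and the identities $ss^*+rr^* = p_{\ss+-}$ and $s's'^*+r^*r = p_{\ss+-}$ that were already exploited at the end of the proof of Proposition~\ref{prop_descr_L}. Because $|\Lambda|^2 = 8-2(r+r^*)^2$ is a polynomial in $r+r^*$, $|\Lambda|$ commutes with $r+r^*$; combining this with the intertwining gives $(\Theta+\Theta^*)\Upsilon|\Lambda| = -\Upsilon(r+r^*)|\Lambda|$, and density of $\Img|\Lambda|$ in $K_{\ss+-}$ allows us to cancel $|\Lambda|$ and deduce $\Upsilon^*(\Theta+\Theta^*)\Upsilon = -(r+r^*)$.

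For the trace formula, I would use that for $\FO_N$ the space $p_1H$ is finite-dimensional (it is the matrix block $B(H_u)$), pick an orthonormal basis $\{\eta_j\}$ of $p_1H$, and express $h\otimes\Tr$ on $B(K)$ as $\sum_j \omega_{\xi_0\otimes\eta_j}$, where $\xi_0\in H$ is the canonical cyclic vector. This gives
\[
(h\otimes\Tr)(\Upsilon T \Upsilon^*) = \sum_j (\Upsilon^*(\xi_0\otimes\eta_j) \mid T\,\Upsilon^*(\xi_0\otimes\eta_j)).
\]
The identity $\Lambda^* = |\Lambda|\Upsilon^*$ shows that $\Upsilon^*(\xi_0\otimes\eta_j) = |\Lambda|^{-1}\Lambda^*(\xi_0\otimes\eta_j)$ is well-defined in $K_{\ss+-}$; setting $e_j = \Lambda^*(\xi_0\otimes\eta_j)$ we obtain $\Upsilon^*(\xi_0\otimes\eta_j) = |\Lambda|^{-1}e_j = f_j$ as required. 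To see that $e_j \in q_1p_1K_{\ss+-}$, I would use the explicit form of $\Lambda^* = (r^*-r)(1+W) + 2(s-s')$ and observe that each of its summands, applied to $\xi_0\otimes\eta_j$, moves one step along the quantum Cayley graph and therefore produces a vector in the ``degree-$1$'' layer $q_1p_1K_{\ss+-}$.

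The main obstacle I anticipate is the intertwining computation: the analogue on $K_a$ was essentially immediate, but on $L$ the cross-terms mixing the $(1+W)(r-r^*)$ and $s^*-s'^*$ contributions must cancel against the relevant parts of $\Theta+\Theta^*$, and this requires a systematic case analysis over all four sector projections $p_{\ss\pm\pm}$ and repeated use of $W\Theta W=\Theta^*$. A secondary technical issue is to choose the basis $\{\eta_j\}$ of $p_1H$ in a way compatible with the graph grading so that the vectors $e_j$ land squarely in $q_1p_1K_{\ss+-}$ rather than in a slightly larger subspace.
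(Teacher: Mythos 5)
Your proposal follows the paper's proof essentially verbatim: the paper likewise establishes the intertwining identity $(\Theta+\Theta^*)\Lambda=-\Lambda(r+r^*)$ by computing the four components $p_{\ss\pm\pm}(\Theta+\Theta^*)\Lambda$ with the appendix identities, passes to the polar decomposition $\Lambda=\Upsilon|\Lambda|$ (using self-adjointness of $\Theta+\Theta^*$ and $r+r^*$, which is equivalent to your commutation argument), and derives the trace formula from an orthonormal basis $(\zeta_i)$ of $p_1H$ with $f_i=\Upsilon^*(\xi_0\otimes\zeta_i)$ and $e_i=\Lambda^*(\xi_0\otimes\zeta_i)$. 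The one detail to make precise in your last step is that the $q_l$-grading is \emph{preserved}, not shifted, by $\Lambda^*$ (it is the $p_n$-grading that shifts by one), so one decomposes $\xi_0\otimes p_1H=q_0p_0K\oplus q_1p_0K$ with $q_0p_0K$ one-dimensional; since $q_0K_{\ss+-}=\{0\}$ forces $q_0L=\{0\}$, the basis vector spanning $q_0p_0K$ is annihilated by $\Upsilon^*$, and the remaining ones lie in $q_1p_0K$ and yield $e_i\in q_1p_1K_{\ss+-}$ as required.
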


\begin{proof}
  We first show that $(\Theta+\Theta^*)\Lambda = -\Lambda(r+r^*)$. Since
  $W\Lambda=\Lambda$ we have
  $p_{\ss++}(\Theta+\Theta^*)\Lambda = 2 p_{\ss++}\Theta\Lambda$ and we compute,
  using the identity~\eqref{app_lem_eq_+++-}:
  \begin{align*}
    p_{\ss++}\Theta\Lambda &= p_{\ss++}\Theta p_{\ss-+} \Lambda
                                  + p_{\ss++}\Theta p_{\ss--}\Lambda
                                  = p_{\ss++}\Theta p_{\ss-+} W (r-r^*) -2 p_{\ss++}\Theta p_{\ss--} s^{\prime *} \\
                                &= s^* (r-r^*) -2 s^*r = -s^*(r+r^*) = -{\ts\frac 12}p_{\ss++}\Lambda(r+r^*).
  \end{align*}
  If we knew that $p_{\ss++}$ is injective on $L$, this would suffice to obtain
  the desired relation because we already know that
  $(\Theta+\Theta^*)(L)\subset L$.  This is true but not completely obvious
  since $\Img \Lambda$ is only dense in $L$. So we check the other
  components. We have, using again~\eqref{app_lem_eq_+-+-} and
  \eqref{app_lem_eq_+-+-prime}:
  \begin{align*}
    p_{\ss+-}\Theta\Lambda &= p_{\ss+-}\Theta p_{\ss+-}\Lambda +
                                  p_{\ss+-}\Theta p_{\ss++}\Lambda
                                  = -r(r-r^*) +2ss^* \text{~~~and} \\
    p_{\ss+-}\Theta^*\Lambda &= p_{\ss+-}\Theta^* p_{\ss+-}\Lambda +
                                    p_{\ss+-}\Theta^* p_{\ss--}\Lambda
                                    = -r^*(r-r^*)-2s's^{\prime *} \text{~~~hence} \\
    p_{\ss+-}(\Theta+\Theta^*)\Lambda &=
                                             (-r^2-rr^*+r^*r+r^{*2}) = -(r-r^*)(r+r^*) = -p_{\ss+-}\Lambda(r+r^*).
  \end{align*}
  Applying $W$ to both sides we obtain
  $p_{\ss-+}(\Theta+\Theta^*)\Lambda = -p_{\ss-+}\Lambda(r+r^*)$. Finally we
  have using~\eqref{app_lem_eq_--+-}:
  \begin{align*}
    p_{\ss--}(\Theta+\Theta^*)\Lambda &= 2 p_{\ss--}\Theta\Lambda
                                        = 2p_{\ss--}\Theta p_{\ss+-}\Lambda + 2 p_{\ss--}\Theta p_{\ss++}\Lambda 
                                        = 2s^{\prime *}(r-r^*) + 4 p_{\ss--}\Theta p_{\ss++} s^* \\
                                      &= 2s^{\prime *}(r-r^*) + 4s^{\prime*}r^* = 2s^{\prime *}(r+r^*) = -p_{\ss--}\Lambda(r+r^*).
  \end{align*}

  Then we perform the polar decomposition of $\Lambda$ as
  $\Lambda = \Upsilon |\Lambda|$, with
  $|\Lambda| = \sqrt{\Lambda^*\Lambda} \in B(K_{\ss+-})$. Since $\Lambda$ has
  dense image in $L$, $\Upsilon \in B(K_{\ss+-},L)$ is a surjective
  isometry. Since $(\Theta+\Theta^*)$, $(r+r^*)$ are self-adjoint, the identity
  $(\Theta+\Theta^*)\Lambda = -\Lambda(r+r^*)$ implies
  $(\Theta+\Theta^*)\Upsilon = -\Upsilon(r+r^*)$.

  To compute $h\otimes\Tr$ we fix an ONB $(\zeta_i)_i$ of $p_1H$, so that we
  have
  $(h\otimes\Tr)(X) = \sum_i (\xi_0\otimes\zeta_i \mid X(\xi_0\otimes\zeta_i))$.
  Observe that $\xi_0\otimes p_1H = p_0 K = q_0p_0K\oplus q_1p_0K$, and we can
  assume that the one-dimensional subspace $q_0p_0K$ is spanned by
  $\xi_0\otimes \zeta_1$. Since $r$, $s$, $s'$, $W$ commute with the projections
  $q_l$, it is also the case for $\Lambda$. In particular the property
  $q_0 K_{\ss+-} = \{0\}$ implies $q_0 L = \{0\}$, hence
  $\xi_0\otimes\zeta_1 \bot L$. On the other hand the vectors
  $\xi_0\otimes \zeta_i$, $i\geq 2$, form a basis of $q_1p_0K$.

  We have then
  $(h\otimes\Tr)(\Upsilon T \Upsilon^*) = \sum_{i\geq 2}
  (\Upsilon^*(\xi_0\otimes\zeta_i)\mid T \Upsilon^*(\xi_0\otimes\zeta_i))$.
  We obtain the formula of the statement by putting
  $f_i = \Upsilon^*(\xi_0\otimes\zeta_i)$. Note moreover that $|\Lambda|$ is
  injective and $|\Lambda| = (8-2(r+r^*)^2)^{1/2}$ by
  Proposition~\ref{prop_descr_L}. Hence $f_i$ has the required form if we define
  $e_i = |\Lambda|(f_i) = \Lambda^*(\xi_0\otimes\zeta_i) = 2
  s^*(\xi_0\otimes\zeta_i) \in p_1 K_{\ss+-}$.
  Since $\xi_0\otimes\zeta_i \in q_1K$ we have $e_i \in q_1p_1 K_{\ss+-}$ as
  claimed.
\end{proof}

\begin{theorem}\label{thm_rev_det_class}
  The element $1+\Re\Theta \in U\Ll(\FO_n)U\otimes B(p_1H)$ is of determinant class with
  respect to the functional $(h\otimes \Tr)$.
\end{theorem}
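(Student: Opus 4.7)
The strategy is to evaluate $(h\otimes\Tr)(\log_+(A))$ with $A := 1+\Re\Theta$ by splitting $K$ into the four stable subspaces $K_g^+ \oplus K_g^- \oplus K_a \oplus L$, which are invariant under $\Theta+\Theta^* = 2\Re\Theta$ and hence under $\log_+(A)$. The contributions from $K_g^{\pm}$ are immediate: on $K_g^+$ we have $A = 0$, and by convention $\log_+(0) = 0$; on $K_g^-$ we have $A = 2$, contributing the finite amount $\log(2)\,(h\otimes\Tr)(P_{K_g^-})$. The substantial work concerns $K_a$ and $L$.

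On both $K_a$ and $L$, the earlier propositions of this section identify $A$ with $1 - \tfrac{1}{2}(r+r^*)$ acting on $K_{\ss+-}$ (up to the isomorphisms described there, scaled by $\sqrt{2}$ in the $K_a$ case). For $L$, Proposition~\ref{prop_descr_Theta} furnishes the explicit trace formula
\[
  (h\otimes\Tr)(\Upsilon T \Upsilon^*) = \sum_i (f_i \mid T f_i), \qquad f_i = (8-2(r+r^*)^2)^{-1/2}\,e_i,
\]
where the vectors $e_i \in q_1 p_1 K_{\ss+-}$ are indexed by an ONB of the \emph{finite-dimensional} space $q_1 p_0 K$. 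Taking $T = \log_+\bigl(1 - \tfrac{1}{2}(r+r^*)\bigr)$, the $L$-contribution reduces to the \emph{finite} sum
\[
  \sum_i \int_{[-2,2]} \frac{\log_+\bigl((2-\lambda)/2\bigr)}{2(2-\lambda)(2+\lambda)}\, d\mu_i(\lambda),
\]
where $\mu_i$ is the spectral measure of $r+r^*$ in $e_i$. The $K_a$-contribution has an analogous form but without the $(8-2(r+r^*)^2)^{-1}$ weight, and is handled similarly (in fact more easily since no singular prefactor appears). Thus the entire problem reduces to showing that each individual integral is finite.

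The decisive analytic input is the edge regularity of the spectral measures of $r+r^*$. Since $r$ is a direct sum of right shifts with explicit weights $c_{k,l} \in (0,1]$ converging to $1$ as $k\to\infty$ (established in \cite{Vergnioux_Cayley}), $r+r^*$ decomposes as a direct sum of Jacobi matrices whose spectral densities in a cyclic vector vanish like $\sqrt{2\mp\lambda}$ at the edges $\lambda=\pm 2$. Combined with the singular prefactor $(8-2(r+r^*)^2)^{-1}\sim 1/(16(2-\lambda))$ near $\lambda=2$, the integrand near the upper edge behaves like $\log(2-\lambda)/\sqrt{2-\lambda}$, which is integrable; near $\lambda=-2$ the $\log_+$ factor is bounded while the $\sqrt{2+\lambda}$ density beats the $(2+\lambda)^{-1}$ singularity. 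The main obstacle is establishing this square-root edge behavior explicitly for the particular weighted shifts arising from $\FO_N$, which should follow from the quantum Clebsch--Gordan formulas determining the $c_{k,l}$ (expressible via ratios of quantum Chebyshev polynomials). With that in hand, all four contributions are bounded below, and $1+\Re\Theta$ is of determinant class.
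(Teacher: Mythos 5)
Your reduction is the same as the paper's: split $K=K_g^+\oplus K_g^-\oplus K_a\oplus L$, dispose of $K_g^{\pm}$ trivially, and use Proposition~\ref{prop_descr_Theta} to turn the $L$-term into a finite sum of integrals of $(1-t^2)^{-1}\log_+(1-t)$ against spectral measures of real parts of weighted unilateral shifts. (A small inaccuracy along the way: the $K_a$-term does not need an ``analogous'' analysis --- it vanishes identically, because $h\otimes\Tr$ is a sum of vector states at vectors in $p_0K\subset K_{\ss++}$, which is orthogonal to $K_a\subset K_{\ss+-}\oplus K_{\ss-+}$.)

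The genuine gap is in the decisive analytic step. You assert that the spectral measure of $r+r^*$ in the relevant cyclic vectors has a density vanishing like $\sqrt{2\mp\lambda}$ at the edges, and you defer this to ``quantum Clebsch--Gordan formulas.'' That square-root edge regularity is precisely the hard part of the theorem, and it is not established --- nor is it clear how to establish it: for a Jacobi matrix whose off-diagonal entries merely converge to $1$, the spectral measure at the edge of the essential spectrum can a priori have point masses, or a density vanishing at a different rate; pointwise edge asymptotics require genuinely delicate input about the weights, far beyond knowing $c_{k,l}\in\itv]0,1]$ with $c_{k,l}\to 1$. The paper sidesteps edge asymptotics entirely with a moment-domination argument: expanding $(R+R^*)^{2k}\delta_0$ over Dyck paths shows $m_{2k}(\Re R)=4^{-k}\sum_{\pi\in D_k}c_\pi\le m_{2k}(\Re R_0)$ for the unweighted shift $R_0$; since $\log_+(1-t^2)/(1-t^2)=\sum a_k t^{2k}$ with all $a_k\le 0$, one gets $\int\frac{\log_+(1-t^2)}{1-t^2}\d\mu\ge\int\frac{\log_+(1-t^2)}{1-t^2}\d\mu_0$, and the right-hand side is a convergent Bertrand integral against the semicircle law. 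The only extra input is that $(1-t^2)^{-1}$ is $\mu$-integrable, which follows from $\eta_0$ lying in the range of $\sqrt{1-(\Re R)^2}$ (a consequence of $\Lambda^*\Lambda=8-2(r+r^*)^2$ from Proposition~\ref{prop_descr_L}) --- a fact you noticed but did not exploit for this purpose. To complete your proof you would either have to prove the claimed square-root edge behavior for these specific shifts, or replace that claim by a comparison argument of this kind.
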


\begin{proof}
  Denote $p_g^+$, $p_g^-$, $p_a$, $p_L$ the orthogonal projections onto $K_g^+$,
  $K_g^-$, $K_a$ and $L$ respectively. Since they commute with $\Theta+\Theta^*$,
  we have to prove that $(h\otimes\Tr)(\log_+(q(1+\Re\Theta)))$ is finite for
  each projection $q = p_g^+$, $p_g^-$, $p_a$, $p_L$ separately. This is clear
  for $p_g^+$, $p_g^-$ since $1+\Re\Theta = 0$ and $2$ on the corresponding
  subspaces. The term with $p_a$ vanishes since $(h\otimes\Tr)$ is a sum of
  vector states associated to vectors in $p_0K = p_0K_{\ss++}$ which is
  orthogonal to $K_a \subset K_{\ss+-}\oplus K_{\ss-+}$.

  Hence we are left with the term corresponding to $p_L = \Upsilon\Upsilon^*$,
  which according to Proposition~\ref{prop_descr_Theta} is equal to:
  \begin{align*}
    (h\otimes\Tr)(\Upsilon\Upsilon^*\log_+(1+\Re\Theta)\Upsilon\Upsilon^*) &=
    (h\otimes\Tr)(\Upsilon\log_+(1-\Re r)\Upsilon^*) \\
    & = {\ts\sum_i} (f_i \mid \log_+(1-\Re r) f_i) \\
    & = {\ts\frac{1}{8}}{\ts\sum_i} (e_i \mid (1-(\Re r)^2)^{-1}\log_+(1-\Re r) e_i).
  \end{align*}
  We fix $i$ and we put $\eta_0 = e_i/\|e_i\| \in q_1p_1K_{\ss+-}$. According
  to~\ref{app_weights} the map $r$ maps $q_1p_kK_{\ss+-}$ isometrically to
  $q_1p_{k+1}K_{\ss+-}$, up to the scalar $c_{k+1,1} \in \itv]0,1]$, and we have
  $r^*(\eta_0) = 0$. If we define recursively
  $\eta_{k+1} = r\eta_k / \|r\eta_k\|$, this shows that we can identify the
  restriction of $r$ to $\overline{C^*(r)\eta_0}$ with a weighted unilateral
  shift on $\ell^2(\NN) \simeq \overline\Span\{\eta_k\}$. Observe moreover that
  $\eta_0$ lies in the range of $\sqrt{1-(\Re r)^2}$, since
  $e_i = 2\sqrt 2 \sqrt{1-(\Re r)^2} f_i$. The result now follows
  from the following Lemma.
\end{proof}

\begin{lemma}
  Let $R$ be a weighted unilateral shift on $\ell^2(\NN)$ with weights
  $c_k\in\itv]0,1]$ --- in other words $R \delta_k = c_{k+1}\delta_{k+1}$ where
  $(\delta_k)_k$ is the canonical basis of $\ell^2(\NN)$. We assume that
  $\delta_0$ is in the range of $\sqrt{1-(\Re R)^2}$ and we denote $\omega$ the
  vector state associated to $\delta_0$. Then
  $\omega((1-(\Re R)^2)^{-1}\log_+(1-\Re R))$ is finite.
\end{lemma}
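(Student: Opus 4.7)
The plan is to transfer the problem to the spectral-measure side and then exploit symmetry together with a structural bootstrap. Let $\mu$ denote the spectral measure of $\Re R$ at $\delta_0$; since $\Re R$ is a tridiagonal self-adjoint operator on $\ell^2(\NN)$ with vanishing diagonal, $\mu$ is a symmetric probability measure on $[-1,1]$, and the hypothesis $\delta_0\in\operatorname{Range}\sqrt{1-(\Re R)^2}$ amounts to $\int(1-\lambda^2)^{-1}\,d\mu<\infty$, which in particular rules out atoms of $\mu$ at $\pm 1$. By spectral calculus,
\[
\omega\bigl((1-(\Re R)^2)^{-1}\log_+(1-\Re R)\bigr) \;=\; \int_{-1}^{1}\frac{\log_+(1-\lambda)}{1-\lambda^2}\,d\mu(\lambda).
\]
The positive part of the integrand (coming from $\lambda\leq 0$) is dominated by $(\log 2)/(1-\lambda^2)$, so its integral is already finite by the hypothesis; the whole question is whether the negative contribution from $\lambda\in(0,1)$ is $>-\infty$.

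Next I would symmetrize. Since $\mu(\{\pm 1\})=0$, $\log_+(1-\lambda)=\log(1-\lambda)$ $\mu$-almost everywhere, and the substitution $\lambda\mapsto -\lambda$ combined with $\log(1-\lambda^2)=\log(1-\lambda)+\log(1+\lambda)$ on $(-1,1)$ gives
\[
\int\frac{\log_+(1-\lambda)}{1-\lambda^2}\,d\mu \;=\; \tfrac{1}{2}\int\frac{\log(1-\lambda^2)}{1-\lambda^2}\,d\mu \;=\; \tfrac{1}{2}\int_0^1\frac{\log t}{t}\,d\nu(t),
\]
where $\nu$ is the push-forward of $\mu$ under $\lambda\mapsto t=1-\lambda^2$. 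Using $|\log t|\leq C_\epsilon\, t^{-\epsilon}$ on $(0,1]$ for any $\epsilon>0$, the statement reduces to showing that $\int_0^1 d\nu(t)/t^{1+\epsilon}<\infty$ for some $\epsilon>0$, i.e.\ a power-type improvement of the hypothesis $\int d\nu/t<\infty$.

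This improved decay is the main obstacle, since one can construct a finite measure $\nu$ on $[0,1]$ with $\int d\nu/t<\infty$ but $\int d\nu/t^{1+\epsilon}=\infty$ for every $\epsilon>0$; the structure of $R$ as a contractive weighted shift, equivalently, the fact that the Jacobi parameters of $\Re R$ satisfy $a_k=c_k/2\leq 1/2$, must therefore intervene. Concretely I would study the three-term recurrence of the orthonormal polynomials for $\mu$ and their transfer matrices at $\lambda=1$, whose operator norms are controlled by the bound $c_k\leq 1$, and combine this with the continued-fraction representation of the Stieltjes transform $\langle\delta_0,(z-\Re R)^{-1}\delta_0\rangle$ near the spectral edge in order to deduce a power-type estimate $\mu([1-\delta,1])=O(\delta^{1+\eta})$ for some $\eta>0$. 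Through $t\sim 2(1-\lambda)$ this translates into $\nu([0,\delta])=O(\delta^{1+\eta})$, from which $\int d\nu/t^{1+\epsilon}<\infty$ for every $\epsilon\in(0,\eta)$ follows by integration by parts, completing the proof. The hardest step is this edge-regularity estimate, which is where the contractivity assumption $c_k\leq 1$ inherited from the quantum Cayley graph of $\FO_N$ really comes in; the earlier steps are formal rearrangements and the final deduction is routine.
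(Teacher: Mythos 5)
Your formal reductions are correct and coincide with the first half of the paper's argument: passing to the spectral measure $\mu$ of $\Re R$ at $\delta_0$, noting that the range hypothesis gives $\int(1-\lambda^2)^{-1}\,d\mu<\infty$, and symmetrizing to reduce the problem to $\int\log(1-\lambda^2)/(1-\lambda^2)\,d\mu>-\infty$. But the proof then stops exactly where the real work begins. The entire content of the lemma is the quantitative edge estimate, and you reduce it to the claim $\mu([1-\delta,1])=O(\delta^{1+\eta})$, which you do not prove: the passage via ``transfer matrices at $\lambda=1$'' and ``the continued-fraction representation of the Stieltjes transform near the spectral edge'' is a research plan, not an argument, and it is far from clear that subordinacy-type methods deliver a power-law bound from the sole hypothesis $c_k\in(0,1]$ without substantial additional work. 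As written, the proposal establishes nothing beyond what the hypothesis already gives ($\mu([1-\delta,1])=o(\delta)$), and you yourself note that this is insufficient.

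For contrast, the paper closes the gap by a termwise moment comparison with the \emph{unweighted} shift $R_0$: expanding $(R+R^*)^{2k}\delta_0$ over Dyck paths shows $m_{2k}(\Re R)=4^{-k}\sum_{\pi\in D_k}c_\pi\le 4^{-k}C_k=m_{2k}(\Re R_0)$, and since $\log(1-t^2)/(1-t^2)=\sum_k a_k t^{2k}$ with all $a_k\le 0$, one gets $\int\log(1-t^2)/(1-t^2)\,d\mu\ge\int\log(1-t^2)/(1-t^2)\,d\mu_0$, the latter being a convergent Bertrand integral against the semicircle law. Incidentally, this moment bound \emph{does} imply the edge estimate you want (taking $k\approx 1/\delta$ in $\mu([1-\delta,1])(1-\delta)^{2k}\le m_{2k}(\Re R)\le Ck^{-3/2}$ gives $\mu([1-\delta,1])=O(\delta^{3/2})$), so your intermediate target is correct and your reduction could be completed this way --- but the decisive input is the Dyck-path moment comparison, which is absent from your proposal.
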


\begin{remark}
  Denote by $\mu$ the spectral measure of $\Re(R)$ with respect to $\omega$,
  which is supported on $\itv[{-1},1]$.  Then we have
  $\omega(f(\Re R)) = (\delta_0|f(\Re R)\delta_0) = \int_{-1}^1 f(t)\d\mu(t)$
  for any $f \in L^\infty(\itv[{-1},1])$, and if $f : \itv[{-1},1] \to \RR$ is
  any Borel map we say that $\omega(f(\Re R))$ is finite if $f$ is integrable
  with respect to $\mu$. In the Lemma above we take
  $f(t) = (1-t^2)^{-1}\log_+(1-t)$ and the finiteness of $\omega(f(\Re R))$ is
  equivalent to the convergence, at $1$ and $-1$, of the integral
  \begin{equation}\label{eq_integral}
    \int_{-1}^1 \frac{\log_+(1-t)}{1-t^2} \d\mu(t).
  \end{equation}
\end{remark}
 
% \noindent {\bf Question :} Is the Lemma true without the hypothesis
% $\delta_0 \in \Img\sqrt{1-(\Re R)^2}$? Or is this hypothesis in fact true for
% all shifts with weights in $\itv]0,1[$?

\begin{proof}
This kind of result is perhaps well-known to experts in operator theory. However
  we provide an elementary proof for the convenience of the reader.

  We proceed by comparison with the standard unilateral shift
  $R_0 : \delta_k \to \delta_{k+1}$. Recall that the moments
  $m_k(\Re(R_0)) = \omega((\Re R_0)^k)$ are given in terms of the Catalan
  numbers $C_k = \frac{1}{k+1}{2k \choose k}$ by $m_{2k+1} = 0$,
  $m_{2k} = 4^{-k}C_k$ \cite[Corollary 2.14]{MR2266879}. Recall also that the
  Catalan numbers are counting the number of Dyck paths $\pi \in D_{k}$ of
  length $2k$, as can be seen by expanding $(R_0+R_0^*)^{2k}\delta_0$ and
  looking for the $\delta_0$ component.  See \cite[Propositions 2.11 and
  2.13]{MR2266879}. In the case of a general $R$, we still have $m_{2k+1} = 0$
  because $R$ is odd with respect to the natural $\ZZ_2$-grading. Moreover,
  still by expanding $(R+R^*)^{2k}\delta_0$ one sees that the even moments
  $m_{2k}(\Re R)$ are given by a sum over Dyck paths,
  $m_{2k}(\Re R) = 4^{-k} \sum_{\pi\in D_k} c_\pi$, where the contributions
  $c_\pi$ are products of weights $c_k$. In particular we have
  $c_\pi\in\itv]0,1]$ and
  $0\leq m_{2k}(\Re R) \leq 4^{-k}\# D_k = m_{2k}(\Re R_0)$.

  As above, denote by $\mu$, $\mu_0$ the spectral measures of $\Re(R)$ and
  $\Re(R_0)$ with respect to $\omega$, which are both supported on
  $\itv[{-1},1]$.  Note that $f : t\mapsto 1/(1-t^2)$ is $\mu$-integrable
  because $\delta_0$ lies in the range of $\sqrt{1-(\Re R)^2}$: indeed,
  approximating $f$ by $f_C : t \mapsto \min(f(t),C)$ and writing
  $\delta_0 = g(\Re R)\zeta$ with $\zeta\in\ell^2(\NN)$,
  $g : t\mapsto \sqrt{1-t^2}$, we have $|f_C(t)g(t)^2| \leq 1$ hence
  $\int_{-1}^1 f_C(t)\d\mu(t) = (\zeta | (f_Cg^2)(\Re R)\zeta) \leq \|\zeta\|^2$
  for all $C$ and $\int_{-1}^1 f(t)\d\mu(t) \leq \|\zeta\|^2$ by monotone
  convergence.

  In particular the integral~\eqref{eq_integral} converges {\bf iff} the
  corresponding integral over $\itv[0,1]$ is finite. Adding the finite quantity
  $\int_{0}^1 \log_+(1+t)/(1-t^2) \d\mu(t)$ to this new integral, we
  conclude that the convergence of~\eqref{eq_integral} is equivalent to
  \begin{displaymath}
    \int_{0}^1 \frac{\log_+(1-t^2)}{1-t^2} \d\mu(t) = \frac{1}{2}\int_{-1}^1 \frac{\log_+(1-t^2)}{1-t^2} \d\mu(t) > -
    \infty,
  \end{displaymath}
  where in the right-hand integral we have switched back to integrating over
  $[-1,1]$ using the fact that $\mu$ is symmetric.  

  We then perform the power series expansion
  $\log_+(1-t^2)/(1-t^2) = \sum a_k t^{2k}$ on $\itv]{-1},1[$: the convergence
  of~\eqref{eq_integral} is equivalent to the finiteness of
  \begin{displaymath}
    \int_{-1}^1 \frac{\log_+(1-t^2)}{1-t^2} \d\mu(t) =
    \int_{-1}^1 \sum_{k\in\NN} a_k t^{2k} \d\mu(t).
  \end{displaymath}
  Since it is readily seen that all coefficients $a_k$ are non-positive and $t^{2k}$ is non-negative,
  one can permute the sum and the integral and compare to $R_0$:
  \begin{displaymath}
    \int_{-1}^1 \frac{\log_+(1-t^2)}{1-t^2} \d\mu(t) = \sum_{k\in\NN} a_k 
    m_{2k}(\Re R) \geq \sum_{k\in\NN} a_k 
    m_{2k}(\Re R_0) = \int_{-1}^1 \frac{\log_+(1-t^2)}{1-t^2} \d\mu_0(t).
  \end{displaymath}

  Now we can conclude because the spectral measure of $R_0$ with respect to
  $\omega$ is well-known: it is the semicircular law
  $\d\mu_0(t) = \frac 1{\pi} \sqrt{1-t^2}\d t$ \cite[Proposition 2.15]{MR2266879}. Hence we are led to the
  following Bertrand integral, which is well-known to be finite:
  \begin{displaymath}
    \int_{-1}^1 \frac{\log(1-t^2)}{\sqrt{1-t^2}} \d t > -\infty.
  \end{displaymath}
\end{proof}

\section{Free entropy and relations in $\FO_N$}
\label{sec_free_entropy}

In this section we will apply Jung and Shlyakhtenko's
Theorem~\ref{thm_jung_shlyakhtenko} to $M=\Ll(\FO_N) \subset B(H)$. We fix the
tuple of standard generators $u = (u_{ij})_{ij}$, which we now consider as
elements of the {\em reduced} \Cst algebra $C^*_\red(\FO_N)$. We consider the
``canonical'' vector of relations
$F = (F_1,F_2) \in \CC\langle x_{ij}\rangle \otimes (M_N(\CC)\oplus M_N(\CC))$
given by $F_1 = x^tx-1$ and $F_2 = xx^t-1$, with
$x = (x_{kl})_{kl} \in \CC\langle x_{ij}\rangle\otimes M_N(\CC)$. Note that we
have $m=N^2$ and $l=2N^2$ with the notation of Section~\ref{sec_notation}.

Recall from Section~\ref{sec_notation} that $\rk \partial F(u)$ is the
Murray-von Neumann dimension of $\overline{\Im}~ \partial F(u)$ in the right
$M\otimes M^\circ$-module $H\otimes H^\circ\otimes (M_N(\CC)\oplus M_N(\CC))$.
The following Lemma is a straightforward adaptation of 
\cite[Lemma~3.1]{Shlyakhtenko_OneBounded} and its proof, and relies heavily on the computation
of the first $L^2$-Betti number of $\FO_N$ in \cite{Vergnioux_Paths}.

\begin{lemma}[{\cite[Lemma~3.1]{Shlyakhtenko_OneBounded}}]
  \label{lem_rank}
  We have $\rk \partial F(u) = N^2-1$.
\end{lemma}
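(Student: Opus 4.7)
Following Shlyakhtenko's strategy \cite[Lemma~3.1]{Shlyakhtenko_OneBounded} in the group case, the plan is to exhibit an explicit one-dimensional subspace of $\Ker \partial F(u)$ and then argue, using the vanishing of the first $L^2$-Betti number of $\FO_N$, that this is the entire kernel. Since $\rk \partial F(u) = N^2 - \dim_{M \otimes M^\circ} \Ker \partial F(u)$, this will give $\rk \partial F(u) = N^2 - 1$.

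First compute $\partial F(u)$ explicitly: applying the free difference quotients and the Leibniz rule to the relations $F_1^{(k,l)} = \sum_a u_{ak}u_{al} - \delta_{kl}$ and $F_2^{(k,l)} = \sum_a u_{ka}u_{la} - \delta_{kl}$ yields
\begin{align*}
\partial_{ij} F_1^{(k,l)}(u) &= \delta_{jk}(1 \otimes u_{il}) + \delta_{jl}(u_{ik} \otimes 1), \\
\partial_{ij} F_2^{(k,l)}(u) &= \delta_{ik}(1 \otimes u_{lj}) + \delta_{il}(u_{kj} \otimes 1),
\end{align*}
so $\partial F(u)$ is a $2N^2 \times N^2$ matrix with entries in $M \otimes M^\circ$. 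A direct calculation using only $u^t u = u u^t = 1$ then verifies that the vector $v = (v_{ij})$ with $v_{ij} = 1 \otimes u_{ij} - u_{ij} \otimes 1$ lies in $\Ker \partial F(u)$; it spans a one-dimensional right $M \otimes M^\circ$-submodule of the kernel, namely the ``trivial'' inner-derivation direction.

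The main step is to argue that $\Ker \partial F(u)$ has dimension exactly one. The interpretation is that any $v = (v_{ij}) \in \Ker \partial F(u)$ uniquely extends, via Leibniz, to a derivation $\delta : \CC[\FO_N] \to L^2(M) \otimes L^2(M^\circ)$ with $\delta(u_{ij}) = v_{ij}$; the kernel condition $\partial F(u)\cdot v = 0$ is precisely the compatibility ensuring that $\delta$ descends from the free $*$-algebra on the $x_{ij}$ to the quotient $\CC[\FO_N]$. The distinguished element above corresponds to the inner derivation $x \mapsto 1\otimes x - x \otimes 1$, and the quotient of all such derivations by inner ones has Murray--von Neumann dimension $\beta_1^{(2)}(\FO_N)$ in Kyed's Hopf-algebraic formulation of $L^2$-Betti numbers for discrete quantum groups.

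By Vergnioux's vanishing theorem \cite{Vergnioux_Paths}, $\beta_1^{(2)}(\FO_N) = 0$ for $N \geq 3$, which forces $\Ker \partial F(u) = \CC v$ and hence $\rk \partial F(u) = N^2 - 1$. The main technical obstacle is the bookkeeping in the third step: making precise the identification of $\Ker \partial F(u)/\CC v$ with the derivation-theoretic incarnation of $H^1_{(2)}(\FO_N)$ computed by Vergnioux. However, since the quadratic relations $F_1, F_2$ provide a natural finite presentation of the Hopf $*$-algebra $\CC[\FO_N]$, this identification should be a direct quantum translation of the classical Fox-calculus argument in \cite{Shlyakhtenko_OneBounded}, consistent with the lemma's stated attribution as a straightforward adaptation.
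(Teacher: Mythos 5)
Your proposal follows essentially the same route as the paper: both identify $\Ker\partial F(u)$ with the right $M\otimes M^\circ$-module of derivations $\CC[\FO_N]\to H\otimes H^\circ$ (via the Leibniz-rule argument that a derivation of the free algebra descends to the quotient iff it kills the relations $F$), and both conclude by invoking the $L^2$-Betti number computation of \cite{Vergnioux_Paths}. The one bookkeeping point you gloss over is that the inner-derivation submodule generated by $v_{ij}=1\otimes u_{ij}-u_{ij}\otimes 1$ has Murray--von Neumann dimension $1-\beta_0^{(2)}(\FO_N)$ rather than $1$ a priori, so the count also requires $\beta_0^{(2)}(\FO_N)=0$; this is true and is exactly what the paper's two Hochschild exact sequences deliver through the formula $\dim_{M\otimes M^\circ}\mathrm{Der}(\CC[\FO_N],H\otimes H^\circ)=\beta_1^{(2)}(\FO_N)-\beta_0^{(2)}(\FO_N)+1$.
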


\begin{proof}
  By definition,  $\CC[\FO_N]$ is the quotient of
  $\CC\langle x_{ij}\rangle$ by the ideal generated by the polynomials
  $F_{pkl}$, $p=1, 2$, $k$, $l = 1,\ldots N$. Recall that $H\otimes H^\circ$ is
  equipped with the $M,M$-bimodule structure corresponding to the left action of
  $M$ (resp. $M^\circ$) on itself. We make it into a
  $\CC\langle x_{ij}\rangle,\CC\langle x_{ij}\rangle$-bimodule by evaluating
  polynomials at $x_{ij}=u_{ij}$, so that
  $P\cdot(\zeta\otimes\xi)\cdot Q = P(u)\zeta \otimes \xi Q(u)$.
  
  A derivation $\delta : \CC\langle x_{ij}\rangle \to H\otimes H^\circ$ factors
  through $\CC[\FO_N]$ {\bf iff} we have $\delta(F_{pkl}) = 0$ for all $p$, $k$,
  $l$ --- indeed by Leibniz' rule and the fact that $F_{pkl}(u) = 0$ this
  implies $\delta(PF_{pkl}Q) = 0$ for all $P$, $Q \in \CC\langle x_{ij}\rangle$.
  Now derivations $\delta : \CC\langle x_{ij}\rangle \to H\otimes H^\circ$ are
  in $1$-$1$ correspondence with the tuples of values
  $\zeta_{ij} = \delta(x_{ij}) \in H\otimes H^\circ$, the derivation
  corresponding to $(\zeta_{ij})_{ij}$ being given by
  $\delta(P) = \partial P\#\zeta = \sum\partial_{ij} P\#\zeta_{ij}$. Then
  $\delta$ factors through $\CC[\FO_N]$ {\bf iff}
  $\partial F\#\zeta = (\partial F_{pkl}\#\zeta)_{pkl} = 0$. Here we use the
  notation $(R\otimes S)\#\xi = R\cdot\xi\cdot S$.

  This shows that the space of derivations
  $\mathrm{Der}(\CC[\FO_N],H\otimes H^\circ)$ is isomorphic as a right
  $M\otimes M^\circ$-module to
  $\Ker\partial F(u) \subset (H\otimes H^\circ)^{m}$, where $m=N^2$. Taking von
  Neumann dimensions, we obtain
  \begin{displaymath}
    \rk\partial F(u) = N^2 - 
    {\ts\dim}_{M\otimes M^\circ} \mathrm{Der}(\CC[\FO_N],H\otimes H^\circ).
  \end{displaymath}
  On the other hand there are general exact sequences for Hochschild cohomology:
\begin{gather*}
  0 \to H^0(\CC[\FO_N],H\otimes H^\circ) \to 
  H\otimes H^\circ \to
  \mathrm{Inn}(\CC[\FO_N],H\otimes H^\circ) \to 0, \\
  0 \to \mathrm{Inn}(\CC[\FO_N],H\otimes H^\circ) \to 
  \mathrm{Der}(\CC[\FO_N],H\otimes H^\circ) \to
  H^1(\CC[\FO_N],H\otimes H^\circ) \to 0.
\end{gather*}
If one uses the definition
$\beta_k^{(2)}(\FO_N) = \dim_{M\otimes M^\circ} H^k(\CC[\FO_N],H\otimes
H^\circ)$ for the $L^2$-Betti numbers (see \cite{Kyed_L2Homology, MR2399103}),
 the additivity of L\"uck-von Neumann dimension readily yields
\begin{displaymath}
  {\ts\dim_{M\otimes M^\circ}} \mathrm{Der}(\CC[\FO_N],H\otimes H^\circ) = 
  \beta_1^{(2)}(\FO_N)-\beta_0^{(2)}(\FO_N)+1.
\end{displaymath}
By \cite[Corollary~5.3]{Vergnioux_Paths} this is equal to $1$, which concludes
the proof.
\end{proof}

% The results of \cite{Vergnioux_Paths} are formulated in terms of $L^2$-{\em
% cocycles} that is, derivations (cohomology) from $\CC[\FO_N]$ to $H$ with
% regular left action and trivial right action. In that paper I argue that this
% coincides with Kyed's definition thanks to arguments by Thom (use of
% affiliated operators and duality to go to homology). Moreover Kyed proves in
% his first paper that his definition coincides with Connes-Shlyakhtenko
% approach. But the most satisfactory approach would be to adapt Kyed's argument
% to go directly from derivations in H to derivations in H\otimes H...

Recall that $\partial F(u) = \partial(F_1,F_2)(u)$ is an operator in
$B(H)\otimes B(H)\otimes B(M_N(\CC),M_N(\CC)\oplus M_N(\CC))$. In the next Lemma
we identify $M_N(\CC)$ with $p_1H$ and we make the connection with the reversing
operator studied in Section~\ref{sec_reversing}.

\begin{lemma}\label{lemma_diff_reversing}
  We have
  $\partial(F_1,F_2)(u)^* \partial(F_1,F_2)(u) = 2 \partial F_1(u)^*\partial
  F_1(u) \in B(H\otimes H\otimes M_N(\CC))$
  and $\partial F_1(u)^*\partial F_1(u)$ is unitarily conjugated to
  $2+2\Re(\Theta\otimes 1)\in B(H\otimes p_1H\otimes H)$. Moreover the state
  $(h\otimes h\otimes\Tr)$ is transformed into
  $(h\otimes\Tr\otimes h)(V_{23}^* \,\cdot\, V_{23})$ under the same conjugation.
\end{lemma}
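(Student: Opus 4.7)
\emph{Proof plan.} I would prove the three assertions in order.

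\emph{Step 1 (the factor of 2).} By the Leibniz rule and $\partial_{ij}x_{mn} = \delta_{im}\delta_{jn}(1\otimes 1)$, the derivatives evaluated at $X = u$ are
\begin{align*}
  \partial_{ij}(F_1)_{kl}(u) &= \delta_{jk}(1\otimes u_{il}) + \delta_{jl}(u_{ik}\otimes 1),\\
  \partial_{ij}(F_2)_{kl}(u) &= \delta_{ik}(1\otimes u_{lj}) + \delta_{il}(u_{kj}\otimes 1).
\end{align*}
A direct expansion of $\partial F_p(u)^*\partial F_p(u)$ ($p=1,2$), collapsing the sums over $k,l$ by means of the orthogonality relations $u^tu = uu^t = 1$ (which hold in $\Ll(\FO_N)$ since $F(u) = 0$), yields in both cases the same $M \otimes M^\circ$-valued matrix with entries
\[
2\delta_{ii'}\delta_{jj'}(1\otimes 1) + u_{i'j}\otimes u_{ij'} + u_{ij'}\otimes u_{i'j}
\]
in the matrix-unit basis of $M_N(\CC)$. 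In particular $\partial F_1(u)^*\partial F_1(u) = \partial F_2(u)^*\partial F_2(u)$, and hence $\partial F(u)^*\partial F(u) = 2\,\partial F_1(u)^*\partial F_1(u)$.

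\emph{Step 2 (the unitary equivalence; the main obstacle).} I would identify $M_N(\CC)\simeq B(H_u) = p_1H$ via matrix units and $H^\circ\simeq H$ as Hilbert spaces, so that the source $H\otimes H^\circ\otimes M_N(\CC)$ and the target $H\otimes p_1H\otimes H$ are both reorderings of a threefold tensor product with factors $H$, $H$ and $p_1H$. The subtlety is that $A^*A := \partial F_1(u)^*\partial F_1(u)$ acts on $\xi\otimes\eta\otimes e_{ij}$ partly through a right multiplication $\eta\mapsto \eta u_{ij'}$ on the $H^\circ$-factor, while $\Theta\otimes 1$, by the formula $\Theta = \Sigma(1\otimes U)V(U\otimes U)\Sigma$, only involves left multiplications together with matrix-unit shifts. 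I would therefore build the conjugating unitary $W\colon H\otimes H^\circ\otimes M_N(\CC)\to H\otimes p_1H\otimes H$ from an obvious reordering of tensor factors followed by the multiplicative unitary $V_{23}$ on the last two factors, the role of $V_{23}$ being to exchange a right multiplication on the second factor with a left multiplication on the third factor, up to a coproduct, via the intertwining relation $V(x\otimes 1)V^* = \Delta(x)$ applied to $x = u_{i'j}$ and $x = u_{ij'}$. Together with the explicit action $V(e_{kl}\otimes \xi) = \sum_a e_{al}\otimes u_{ak}\xi$ of $V$ on $p_1H\otimes H$ and the antipodal identity $R(u_{ij}) = u_{ji}$ governing the action of $U$ on $p_1H$, this would allow one to match the two cross terms $u_{i'j}\otimes u_{ij'}$ and $u_{ij'}\otimes u_{i'j}$ of $A^*A$ with $\Theta\otimes 1$ and $\Theta^*\otimes 1$ respectively, while the diagonal term $2\delta_{ii'}\delta_{jj'}$ accounts for the constant~$2$. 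Producing the correct $W$ and carrying out this term-by-term matching, which requires a careful juggling of the flips $\Sigma$, the antipode twists $U\otimes U$ and the coproduct action hidden in $V$, is the technical heart of the proof.

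\emph{Step 3 (the state transformation).} With $W$ of the above form, the plain reordering of tensor factors converts $(h\otimes h\otimes\Tr)$ into $(h\otimes\Tr\otimes h)$ on $B(H\otimes p_1H\otimes H)$ (the two copies of $h$ sit on the $H$-factors and $\Tr$ on the $p_1H$-factor, using $B(p_1H) \simeq B(M_N(\CC))$), and the subsequent conjugation by $V_{23}$ produces the sandwich $V_{23}^*\,\cdot\,V_{23}$. This yields $(h\otimes h\otimes\Tr)(W^*XW) = (h\otimes\Tr\otimes h)(V_{23}^*XV_{23})$ for all $X\in B(H\otimes p_1H\otimes H)$, which is precisely the claimed state identification.
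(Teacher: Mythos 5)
Your Step 1 is correct: the entrywise expansion of $\partial F_p(u)^*\partial F_p(u)$ does collapse via $u^tu=uu^t=1$ to $2\delta_{ii'}\delta_{jj'}(1\otimes 1)+u_{i'j}\otimes u_{ij'}+u_{ij'}\otimes u_{i'j}$ for both $p=1,2$, which gives the factor of $2$. (The paper gets the same conclusion differently, by observing that after identifying $M_N(\CC)\simeq p_1H$ each $\partial F_p(u)$ is a \emph{sum of two explicit unitaries}, namely $(1\otimes U\otimes U)V_{32}(1\otimes U\otimes 1)+V_{31}^*$ for $F_1$, so that $\partial F_p(u)^*\partial F_p(u)=2+2\Re W$ with $W$ a single unitary; the equality of the two then needs the commutation $[c_0(\Gamma),Uc_0(\Gamma)U]=0$.) The genuine gap is in Step 2, and you flag it yourself: you never produce the conjugating unitary nor verify the matching with $\Theta\otimes 1$, and this is precisely the content of the lemma. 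Worse, the unitary you propose --- a reordering of tensor legs followed by $V_{23}$ --- is not the one that works. In the paper the conjugation is by $V_{13}V_{23}U_1\Sigma_{23}U_2$, and the collapse of $W=V_{31}(1\otimes U\otimes U)V_{32}(1\otimes U\otimes 1)$ to $\Theta\otimes 1=\tilde V_{12}U_2$ hinges on the pentagon-type identity $V_{13}V_{23}\tilde V_{12}=\tilde V_{12}V_{13}$ of Baaj--Skandalis, which your sketch never invokes; without the $V_{13}$ leg and this identity the cross terms do not close up into $\Theta\otimes 1$. Your entrywise form of the cross terms also makes the matching harder than necessary: the sum-of-two-unitaries presentation is what turns the problem into conjugating one explicit unitary into another.

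Step 3 then inherits the gap: your computation $(h\otimes h\otimes\Tr)(W^*YW)=(h\otimes\Tr\otimes h)(V_{23}^*YV_{23})$ is valid for \emph{your} candidate $W$, but since that $W$ does not implement the operator conjugation, the argument does not go through as stated. The claimed state formula is nevertheless the right one for the paper's conjugating unitary, because $U\xi_0=\xi_0$ and $V(\xi_0\otimes\,\cdot\,)=\xi_0\otimes\,\cdot$ force $V_{13}$, $U_1$, $U_2$ to fix the vectors $\xi_0\otimes\zeta\otimes\xi_0$ implementing $h\otimes\Tr\otimes h$, so only the $V_{23}$ sandwich survives --- a point your plan would need to make once the correct unitary is in hand.
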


\begin{proof}
  We first compute the free derivatives. We have
  $F_{1kl} = \sum_p x_{pk}x_{pl} - \delta_{kl}$ hence
  $\partial_{ij}F_{1kl} = \delta_{kj} (1\otimes x_{il}) + \delta_{lj}
  (x_{ik}\otimes 1)$.
  Using the matrix units $E_{ij}$ as a basis of $M_N(\CC)$, this yields
  \begin{align*}
    \partial F_1(E_{ij}) &= {\ts\sum_{kl}} \delta_{jk}(1\otimes x_{il}\otimes E_{kl})
    + {\ts\sum_{kl}} \delta_{jl} (x_{ik}\otimes 1\otimes E_{kl}) \\
    &= {\ts\sum_l} (1\otimes x_{il}\otimes E_{jl}) + 
    {\ts\sum_k} (x_{ik}\otimes 1\otimes E_{kj})
  \end{align*}
  so that
  $\partial F_1 = \sum_{il} (1\otimes x_{il}\otimes T\lambda(E_{li})) +
  \sum_{ik} (x_{ik}\otimes 1\otimes \lambda(E_{ki}))$
  where $\lambda(M) \in B(M_N(\CC))$ is the map of left multiplication by $M$,
  and $T \in B(M_N(\CC))$ is the transpose map. Similarly for
  $F_2 = (\sum x_{kp}x_{lp}-\delta_{kl})_{kl}$ we have
  \begin{align*}
    \partial F_2(E_{ij}) &= {\ts\sum_{kl}} \delta_{ik}(1\otimes x_{lj}\otimes E_{kl})
    + {\ts\sum_{kl}} \delta_{il} (x_{kj}\otimes 1\otimes E_{kl}) \\
    &= {\ts\sum_l} (1\otimes x_{lj}\otimes E_{il}) + 
    {\ts\sum_k} (x_{kj}\otimes 1\otimes E_{ki})
  \end{align*}
  and
  $\partial F_2 = \sum_l (1\otimes x_{lj}\otimes T\lambda(E_{lj})T) + \sum_k
  (x_{kj}\otimes 1\otimes \lambda(E_{kj})T)$.

  Then we evaluate at $x_{ij}=u_{ij}$. Recall that
  $C^*_\red(\FO_N)\otimes C^*_\red(\FO_N)^\circ$ acts on $H\otimes H$ by
  $\id\otimes\rho$, where $\rho(x)$ is the map of right multiplication by $x$,
  which can also be written $\rho(x) = US(x)U$ in the Kac case. Here $S$ is
  the antipode and we have in particular $S(u_{ij}) = u_{ji}$. We obtain in
  $B(H\otimes H\otimes M_N(\CC))$:
  \begin{align*}
    \partial F_1(u) &= \sum (1\otimes U u_{li}U\otimes T\lambda(E_{li})) +
                      \sum(u_{ik}\otimes 1\otimes \lambda(E_{ik}^*)) \\
                    &= (1\otimes U\otimes T)(\id\otimes\lambda)(u_{32})
                      (1\otimes U\otimes 1) + (\id\otimes\lambda)(u_{31}^*),    
  \end{align*}
  where $u \in M_N(C^*_\red(\FO_N)) \simeq M_N(\CC) \otimes C^*_\red(\FO_N)$. 

  Now we identify $M_N(\CC) = B(\CC^N)$ with $p_1H$ using the decomposition of
  the multiplicative unitary recalled in Section~\ref{sec_notation} --- in
  particular we have then
  $(\id\otimes\lambda)(u_{21}) = u_{21} \in C^*_\red(\Gamma)\otimes p_1c_0(\Gamma)
  \subset B(H\otimes p_1H)$.
  Moreover in this identification $T$ corresponds with the restriction of $U$ to
  $p_1H$. Hence we have finally
  $\partial F_1(u) = (1\otimes U\otimes U)u_{32}(1\otimes U\otimes 1) +
  u_{31}^*$,
  which is also the restriction of
  $(1\otimes U\otimes U)V_{32}(1\otimes U\otimes 1) + V_{31}^*$ to
  $H\otimes H\otimes p_1H$. This is a sum of two unitaries and we obtain in
  particular $\partial F_1(u)^*\partial F_1(u) = 2 + 2\Re W$ on
  $H\otimes H\otimes p_1H$, where
  $W = V_{31}(1\otimes U\otimes U)V_{32}(1\otimes U\otimes 1)$.

  Proceeding similarly with $F_2$ we obtain
  \begin{align*}
    \partial F_2(u) &= \sum (1\otimes Uu_{jl}U\otimes T\lambda(E_{lj})T) + 
                      \sum (u_{kj}\otimes 1\otimes \lambda(E_{kj})T) \\
    &= (1\otimes U\otimes T)(\id\otimes\lambda)(u_{32}^*)(1\otimes U\otimes T) +
      (\id\otimes\lambda)(u_{31})(1\otimes 1\otimes T) \\
    &= (1\otimes U\otimes U) V_{32}^* (1\otimes U\otimes U) +
      V_{31}(1\otimes 1\otimes U) \text{~~~on $H\otimes H\otimes p_1H$,}
  \end{align*}
  and
  $\partial F_2(u)^*\partial F_2(u) = 2 + 2 \Re(1\otimes U\otimes U)V_{32}
  (1\otimes U\otimes U)V_{31}(1\otimes 1\otimes U)$.
  We moreover observe that
  $(1\otimes U\otimes U)V_{32} (1\otimes U\otimes U) \in 1\otimes B(H) \otimes
  Uc_0(\FO_N)U$
  and $V_{31} \in B(H)\otimes 1\otimes c_0(\FO_N)$. Since
  $[c_0(\Gamma), Uc_0(\Gamma)U] = 0$, we can permute both terms and we obtain
  $\partial F_2(u)^*\partial F_2(u) = 2 + 2 \Re W = \partial F_1(u)^*\partial
  F_1(u)$.

  Now we perform unitary conjugations to ``simplify'' $W$. We first use
  $U_2\Sigma_{23}$ which yields the symmetric form
  $W\sim_u V_{21}U_2 V_{23} \in B(H\otimes p_1 H\otimes H)$. Conjugating further
  by $U_1$ we obtain $W \sim_u \tilde V_{12}U_2V_{23}$, where
  $\tilde V = \Sigma(1\otimes U)V(1\otimes U)\Sigma$. Finally we conjugate by
  $V_{13}V_{23}$ and we use the formula
  $V_{13}V_{23}\tilde V_{12} = \tilde V_{12}V_{13}$ from
  \cite[Proposition~6.1]{BaajSkandalis}:
  \begin{displaymath}
    W \sim_u V_{13}V_{23}\tilde V_{12}U_2V_{13}^* = 
    \tilde V_{12}V_{13}U_2V_{13}^* = \tilde V_{12} U_2 = \Theta\otimes 1.
  \end{displaymath}

  Notice at last that $h\otimes h\otimes\Tr$ is a sum of vector states
  associated to vectors of the form $\xi_0\otimes\xi_0\otimes\zeta$. We have
  $U\xi_0 = \xi_0$ and $V(\xi_0\otimes 1) = \xi_0\otimes 1$. Applying the
  various unitaries used to transform $W$ we obtain
  \begin{displaymath}
    V_{13}V_{23}U_1\Sigma_{23}U_2 (\xi_0\otimes\xi_0\otimes\zeta) = 
    V_{13}V_{23} (\xi_0\otimes\zeta\otimes\xi_0) = V_{23}
    (\xi_0\otimes\zeta\otimes\xi_0)
  \end{displaymath}
  and the last claim follows.
\end{proof}

Thanks to Theorem~\ref{thm_rev_det_class} we can finally prove our main theorem:
\begin{theorem}\label{s1bdd}
  The von Neumann algebra $\Ll(\FO_N)$ is strongly $1$-bounded for all
  $N \ge 3$.
\end{theorem}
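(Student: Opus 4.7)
The strategy is to apply Theorem~\ref{thm_jung_shlyakhtenko} to the tuple $X = u = (u_{ij})_{1 \le i,j \le N}$ of canonical self-adjoint generators together with the vector of relations $F = (F_1,F_2)$ introduced at the start of Section~\ref{sec_free_entropy}, obtaining $1$-boundedness of $u$, and then to invoke Jung's general criterion \cite[Cor.~3.6]{Jung_OneBounded} to promote $1$-boundedness of this single generating tuple to strong $1$-boundedness of $\Ll(\FO_N)$. In this setup $m = N^2$, and Lemma~\ref{lem_rank} gives $\rk \partial F(u) = N^2-1$, so Theorem~\ref{thm_jung_shlyakhtenko} yields exactly the desired bound $\alpha = m - \rk \partial F(u) = 1$, provided the determinant class hypothesis on $\partial F(u)$ holds.

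The verification of this determinant class hypothesis is where the analytic work of Section~\ref{sec_reversing} enters. By Lemma~\ref{lemma_diff_reversing}, $\partial F(u)^*\partial F(u)$ is, up to a factor of $2$, unitarily equivalent to the operator $2(1+\Re\Theta)\otimes 1_H$ on $H\otimes p_1 H\otimes H$, under a unitary conjugation that transforms the natural tracial functional $h\otimes h\otimes\Tr$ into $(h\otimes\Tr\otimes h)(V_{23}^* \,\cdot\, V_{23})$. Since the spectral calculus of $2(1+\Re\Theta)\otimes 1_H$ is trivial in the third tensor leg and $h \otimes \Tr \otimes h$ is tracial on the relevant subalgebra (with $V_{23}$ a unitary), the $V_{23}$-twist can be absorbed by traciality, so the condition to check collapses to $(h\otimes\Tr)(\log_+(1+\Re\Theta)) > -\infty$, which is precisely the content of Theorem~\ref{thm_rev_det_class}. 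Consequently $u$ is $1$-bounded.

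To promote this to strong $1$-boundedness of $\Ll(\FO_N)$ it remains, by Jung's criterion, to verify that at least one generator has finite single-variable microstates free entropy. For $N \ge 3$ the spectral distribution of each $u_{ij}$ is a compactly supported symmetric probability measure whose density is regular enough for Voiculescu's single-variable formula to yield $\chi(u_{ij}) > -\infty$; this follows from the corepresentation-theoretic description of the joint distribution of the canonical generators available in \cite{Banica_ReprOrtho, BrannanCollinsVergnioux}. The main obstacle throughout the argument is the spectral regularity statement Theorem~\ref{thm_rev_det_class}; once it is in place, the proof of Theorem~\ref{s1bdd} is a formal assembly of Lemma~\ref{lem_rank}, Lemma~\ref{lemma_diff_reversing}, Theorem~\ref{thm_jung_shlyakhtenko}, and Jung's framework.
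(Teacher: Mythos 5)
Your proposal is correct and follows essentially the same route as the paper: Theorem~\ref{thm_jung_shlyakhtenko} applied to $u$ and $F$, with the rank computed by Lemma~\ref{lem_rank}, the determinant class condition reduced via Lemma~\ref{lemma_diff_reversing} to Theorem~\ref{thm_rev_det_class}, and finiteness of $\chi(u_{ij})$ from the known single-variable spectral distribution. The only (harmless) deviation is in disposing of the $V_{23}$-twist of the functional: you absorb it by traciality of $\Tr\otimes h$ on $B(p_1H)\otimes\Ll(\FO_N)$ (which works here since $\FO_N$ is Kac, after first slicing off the first leg where $h$ is merely a vector state), whereas the paper simply dominates the twisted positive functional on the finite-dimensional algebra $B(p_1H)$ by a multiple of the standard trace, an argument that also survives in the non-Kac setting.
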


\begin{proof}
  The $1$-boundedness of the tuple of generators $u = (u_{ij})$ of $\Ll(\FO_N)$
  is a straightforward consequence of Jung's and Shlyakhtenko's
  Theorem~\ref{thm_jung_shlyakhtenko}, applied to $u$ and to the relations
  $F = (F_{pkl})$ introduced at the beginning of this section. Note that on the
  matrix algebra $B(p_1H)\otimes 1$, any positive functional, and in particular
  $(\Tr\otimes h)(V^* \cdot V)$, is dominated by a multiple of the standard
  trace $\Tr\otimes h$. Then $\partial F(u)$ is of determinant class with
  respect to $(h\otimes h\otimes\Tr)$ by Lemma~\ref{lemma_diff_reversing} and
  Theorem~\ref{thm_rev_det_class}. Moreover $N^2-\rk \partial F(u) = 1$ by
  Lemma~\ref{lem_rank}.

  Strong $1$-boundedness of $\Ll(\FO_N)$ will now follow if at least one of the
  generators $u_{ij}$ has finite free entropy. Recall that for a single
  self-adjoint variable $X = X_1$ in a finite von Neumann algebra $(M,\tau)$ with
  law $\mu$, we have the formula \cite[Proposition~4.5]{MR1296352}
  \begin{displaymath}
    \chi(X) = \iint \log |s-t| \d\mu(s)\d\mu(t) + C.
  \end{displaymath}
  In particular if $\mu$ has an essentially bounded density with respect to the
  Lebesgue measure (and is compactly supported), then $\chi(X)$ is evidently
  finite. This is indeed the case for all generators $u_{ij}$ of $\Ll(\FO_N)$ ---
  according to \cite[Theorem~5.3]{BanicaCollinsZinnJustin} the density is even
  continuous.
  % Note: in that paper the density is claimed to be analytic, which does not
  % seem so clear at the edge of the spectrum. But continuity is OK.
\end{proof}

\begin{corollary} \label{consequence} For $N\geq 3$ the von Neumann algebra
  $\Ll(\FO_N)$ is not isomorphic to any finite von Neumann algebra (with
  separable predual) which admits a tuple of self-adjoint generators $X$ with
  $\delta_0(X)>1$. In particular it is not isomorphic to any free group factor
  $\Ll(F_n)$.
\end{corollary}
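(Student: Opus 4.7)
The plan is to deduce this corollary as a direct formal consequence of Theorem~\ref{s1bdd} together with the defining property of strong $1$-boundedness, via a short contradiction argument followed by Voiculescu's computation of $\delta_0$ for a free semicircular system.

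For the first assertion, I would suppose there exists a $*$-isomorphism $\pi : \Ll(\FO_N) \to M$ onto a finite von Neumann algebra $M$ with separable predual that admits a self-adjoint generating tuple $X$ with $\delta_0(X) > 1$. Since $\Ll(\FO_N)$ is a full II$_1$-factor (as recalled in the introduction), $M$ is also a II$_1$-factor, so its faithful normal tracial state is unique and $\pi$ is automatically trace-preserving. Consequently $\pi^{-1}(X) \subset \Ll(\FO_N)_{sa}$ has the same joint $*$-distribution as $X$, hence $\delta_0(\pi^{-1}(X)) = \delta_0(X) > 1$, and $\pi^{-1}(X)$ still generates $\Ll(\FO_N)$ as a von Neumann algebra. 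On the other hand, Theorem~\ref{s1bdd} combined with the definition of strong $1$-boundedness recalled in Section~\ref{FED} tells us that every self-adjoint generating tuple of $\Ll(\FO_N)$ is $1$-bounded for $\delta_0$, and therefore satisfies $\delta_0 \leq 1$. Applied to $\pi^{-1}(X)$, this contradicts the previous inequality and establishes the first assertion.

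For the ``in particular'' part, I would apply the first assertion to $M = \Ll(F_n)$ with $n \geq 2$ and to the canonical self-adjoint generating tuple $S = (S_1,\ldots,S_n)$ of free $(0,1)$-semicircular elements in $\Ll(F_n)$. Voiculescu's fundamental computation \cite{MR1887698} (already recalled in Section~\ref{FED}) gives $\delta_0(S) = n \geq 2 > 1$, so the first part immediately yields $\Ll(\FO_N) \not\simeq \Ll(F_n)$ for every $N \ge 3$ and $n \ge 2$.

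I do not anticipate any genuine obstacle in this deduction: the corollary is essentially a formal repackaging of the definition of strong $1$-boundedness combined with Voiculescu's formula, and all the substantial content is absorbed into Theorem~\ref{s1bdd}. The only subtlety worth flagging is the need for $\pi$ to be trace-preserving, which as noted above is automatic because $\Ll(\FO_N)$ is a II$_1$-factor and hence so is $M$.
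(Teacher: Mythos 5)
Your argument is correct and matches what the paper intends: the corollary is stated without an explicit proof precisely because it is the formal consequence of Theorem~\ref{s1bdd} together with Jung's result (recalled in Section~\ref{FED}) that in a strongly $1$-bounded algebra every self-adjoint generating tuple is $1$-bounded, hence has $\delta_0 \leq 1$, combined with Voiculescu's computation $\delta_0(S) = n$ for the free semicircular generators of $\Ll(F_n)$. Your added remark that the isomorphism is automatically trace-preserving (by uniqueness of the trace on a II$_1$-factor) is a worthwhile clarification, but does not change the route.
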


More generally, $\Ll(\FO_N)$ is not isomorphic to any interpolated free group
factor $\Ll(F_r)$, nor to any group von Neumann algebra $\Ll(\Gamma)$ where
$\Gamma = {*}_{i=1}^k \ZZ/n_i\ZZ$ is a free product of cyclic groups, for
instance $\Gamma = (\ZZ/2\ZZ)^{*N}$. Indeed these von Neumann algebras admit
tuples of self-adjoint generators with $\delta_0 = r$,
$\delta_0 = k-\sum_{i=1}^k n_i^{-1}$ respectively and these values are strictly
bigger than $1$ in the non amenable cases. According to
\cite[Lemma~3.7]{Jung_OneBounded}, $\Ll(\FO_N)$ is not isomorphic either to any
free product of $R^\omega$-embeddable diffuse finite von Neumann algebras.

\appendix
\section{Computation rules in quantum Cayley trees}
\label{sec_appendix}
\renewcommand{\theequation}{\Alph{section}.\arabic{equation}}

In this appendix we recall definitions and results from
\cite{Vergnioux_Cayley,Vergnioux_Paths} about quantum Cayley graphs for discrete
quantum groups. We use the notation about discrete quantum groups recalled in
Section~\ref{sec_notation}.

\begin{definition}[{\cite[Definition~3.1]{Vergnioux_Cayley}}]
  Let $\Gamma$ be a discrete quantum group, and fix a central projection
  $p_1 \in Z(M(c_0(\Gamma)))$ such that $Up_1 = p_1U$ and $p_0p_1 = 0$. The {\em
    quantum Cayley graph} $X$ \cite{Vergnioux_Cayley} associated to
  $(\Gamma, p_1)$ is given by
\begin{itemize}
\item the vertex and edge Hilbert spaces $\ell^2(X^{(0)}) = \ell^2(\Gamma)$ and
  $\ell^2(X^{(1)}) = \ell^2(\Gamma)\otimes p_1\ell^2(\Gamma)$,
\item the vertex and edge \Cst algebras $c_0(X^{(0)}) = c_0(\Gamma)$ and
  $c_0(X^{(1)}) = c_0(\Gamma)\otimes p_1c_0(\Gamma)$, naturally represented on
  the corresponding Hilbert spaces,
\item the antilinear involutions $J^{(0)} = J$ and $J^{(1)} = J\otimes J$ on
  $\ell^2(X^{(0)})$ and $\ell^2(X^{(1)})$,
\item the boundary operator
  $E = V \in B(\ell^2(X^{(1)}), \ell^2(X^{(0)})\otimes \ell^2(X^{(0)}))$.
\item the reversing operator
  $\Theta = \Sigma(1\otimes U)V(U\otimes U)\Sigma \in B(\ell^2(X^{(1)}))$,
\end{itemize}
We denote $\ell^2(X^{(0)}) = \ell^2(\Gamma) = H$ and
$\ell^2(X^{(1)}) = H\otimes p_1H = K$. We also consider the source and target
operators $E_1 = (\id\otimes\epsilon) E$,
$E_2 : (\epsilon\otimes\id)E : K \to H$, which are {\em a priori} only densely
defined. 
\end{definition}

Recall that $H$ is the GNS space for the Haar state $h \in C^*(\Gamma)^*$, with
canonical cyclic vector $\xi_0$. Denote $\CC[\Gamma]$ the canonical dense
Hopf $\ast$-subalgebra of $C^*(\Gamma)$. The following Proposition computes the
structure maps of the quantum Cayley graph in terms of the Hopf algebra
structure of $\CC[\Gamma]$.

\begin{proposition}[{\cite[Lemma~3.5, Proposition~3.6]{Vergnioux_Cayley}}]
  \label{app_reverse_target}
  For any $x$, $y\in\CC[\Gamma]$ we have
  \begin{itemize}
  \item $\Theta (x\xi_0\otimes y\xi_0) = (\id\otimes S)((x\otimes
    1)\Delta(y))(\xi_0\otimes\xi_0)$,
  \item $E_1(x\xi_0\otimes y\xi_0) = \epsilon(y)x\xi_0$ and
    $E_2(x\xi_0\otimes y\xi_0) = xy\xi_0$.
  \end{itemize}
  Moreover $E_1\Theta = E_2$ and $E_2\Theta = E_1$. If $p_1 \in c_0(\Gamma)$,
  then $E_1$ and $E_2$ are bounded.
\end{proposition}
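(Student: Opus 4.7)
The plan is to verify each of the four claims by a direct computation from the definitions of $V$, $U$, $\Theta$ and $E$, combined with the standard Hopf-algebra identities for the coproduct, counit and antipode on $\CC[\Gamma]$. Writing $\Delta(y) = \sum y_{(1)}\otimes y_{(2)}$ in Sweedler notation, the toolkit is $V(a\xi_0\otimes b\xi_0) = \sum a_{(1)}\xi_0 \otimes a_{(2)}b\xi_0$, $U(z\xi_0) = R(z)\xi_0$ with $R$ the unitary antipode (equal to $S$ in the Kac case, which is the case of primary interest here), the involutivity $R^2 = \id$, the anti-multiplicativity of the coproduct under the antipode $\Delta\circ R = (R\otimes R)\circ\Delta^{\mathrm{op}}$, and $(\epsilon\otimes\id)\Delta = (\id\otimes\epsilon)\Delta = \id$.

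For the $\Theta$-formula, I would unravel $\Sigma(1\otimes U)V(U\otimes U)\Sigma$ applied to $x\xi_0\otimes y\xi_0$ factor by factor: the initial $\Sigma$ gives $y\xi_0\otimes x\xi_0$; then $U\otimes U$ yields $R(y)\xi_0\otimes R(x)\xi_0$; then $V$ together with $\Delta(R(y)) = \sum R(y_{(2)})\otimes R(y_{(1)})$ produces $\sum R(y_{(2)})\xi_0\otimes R(xy_{(1)})\xi_0$; applying $1\otimes U$ and $R^2=\id$ gives $\sum R(y_{(2)})\xi_0\otimes xy_{(1)}\xi_0$; and the final $\Sigma$ yields $\sum xy_{(1)}\xi_0\otimes R(y_{(2)})\xi_0$, which is $(\id\otimes S)((x\otimes 1)\Delta(y))(\xi_0\otimes\xi_0)$ after identifying $S$ with $R$.

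The formulas for $E_1$ and $E_2$ are immediate from $V(x\xi_0\otimes y\xi_0) = \sum x_{(1)}\xi_0\otimes x_{(2)}y\xi_0$: applying $\id\otimes\epsilon$ and $\sum x_{(1)}\epsilon(x_{(2)}) = x$ yields $E_1(x\xi_0\otimes y\xi_0) = \epsilon(y)x\xi_0$, while applying $\epsilon\otimes\id$ and $\sum\epsilon(x_{(1)})x_{(2)} = x$ yields $E_2(x\xi_0\otimes y\xi_0) = xy\xi_0$. The two relations $E_1\Theta = E_2$ and $E_2\Theta = E_1$ then drop out by substituting the $\Theta$-formula: for the first I would use $\epsilon\circ S = \epsilon$ together with $(\id\otimes\epsilon)\Delta = \id$, and for the second I would use the antipode axiom $\sum y_{(1)}S(y_{(2)}) = \epsilon(y)1$.

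Finally, when $p_1\in c_0(\Gamma) = c_0{-}\bigoplus_\alpha B(H_\alpha)$, the projection $p_1$ is supported on only finitely many irreducibles $\alpha$, hence $p_1H = \bigoplus_{\alpha}p_\alpha H$ is a finite-dimensional subspace of $H$. Consequently $\epsilon$ restricted to $p_1H$ is automatically a bounded linear functional, and both $E_1 = (\id\otimes\epsilon)V|_K$ and $E_2 = (\epsilon\otimes\id)V|_K$ extend to bounded operators $K = H\otimes p_1H \to H$. The whole argument is a sequence of bookkeeping exercises with Sweedler notation and therefore presents no serious obstacle; the only mildly delicate point is keeping careful track of the $S$-versus-$R$ distinction in the non-Kac setting, a distinction which is irrelevant for the main application to $\FO_N$, which is of Kac type.
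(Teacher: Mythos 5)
First, note that the paper itself offers no proof of this Proposition: it is imported verbatim from \cite{Vergnioux_Cayley} (Lemma~3.5 and Proposition~3.6) and stated in the Appendix as background, so there is nothing in this paper to compare against line by line. Your direct Sweedler-calculus verification is the natural argument and is correct in substance: the unravelling of $\Sigma(1\otimes U)V(U\otimes U)\Sigma$ via $\Delta\circ R=(R\otimes R)\circ\Delta^{\mathrm{op}}$, anti-multiplicativity and $R^2=\id$; the computation of $E_1$, $E_2$ from $(\id\otimes\epsilon)\Delta=(\epsilon\otimes\id)\Delta=\id$; and the derivation of $E_1\Theta=E_2$ and $E_2\Theta=E_1$ from $\epsilon\circ S=\epsilon$ and the antipode axiom $\sum y_{(1)}S(y_{(2)})=\epsilon(y)1$ are all as they should be. You are also right to flag that the computation naturally produces the unitary antipode $R$ where the statement displays $S$; in the Kac setting these coincide, and the paper's definition of $U$ (namely $U(x\xi_0)=R(x)\xi_0$ being unitary with $h$ tracial) is implicitly a Kac-type statement anyway.

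The one step that does not work as written is the boundedness of $E_2$. You argue that $\epsilon$ restricted to the finite-dimensional space $p_1H$ is bounded and that this bounds both $E_1=(\id\otimes\epsilon)V|_K$ and $E_2=(\epsilon\otimes\id)V|_K$. But in these defining expressions $\epsilon$ is applied to a leg of $V(\xi\otimes\eta)$, and neither leg of $V(x\xi_0\otimes y\xi_0)=\sum x_{(1)}\xi_0\otimes x_{(2)}y\xi_0$ stays inside $p_1H$. The boundedness of $\epsilon|_{p_1H}$ does prove that $E_1$ is bounded, but only through the derived formula, which exhibits $E_1$ as $\id_H\otimes(\epsilon|_{p_1H}):H\otimes p_1H\to H$. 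For $E_2(x\xi_0\otimes y\xi_0)=xy\xi_0$ the boundedness of $\epsilon|_{p_1H}$ is simply not the relevant fact; you need a separate one-line argument, e.g.\ $E_2=E_1\Theta$ together with the fact that $\Theta$ restricts to a unitary of $K$ (which holds because $p_1$ commutes with $U$), or the observation that $E_2(\sum_i\xi_i\otimes y_i\xi_0)=\sum_i\rho(y_i)\xi_i$ for an orthonormal basis $(y_i\xi_0)_i$ of the finite-dimensional space $p_1H$, where each right-multiplication operator $\rho(y_i)$ is bounded. Either repair is immediate, so the gap is easily closed, but as stated the justification for $E_2$ is not valid.
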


A key feature of the quantum case is that the reversing operator need not be
involutive --- in fact if $p_1$ is ``generating'' $\Theta$ is involutive {\bf
  iff} $\Gamma$ is a genuine discrete group
\cite[Proposition~3.4]{Vergnioux_Cayley}. Hence the study of the following
eigenspaces becomes non trivial, important, and turns out to be useful for
applications:

\begin{definition}[{\cite[Definition~3.1]{Vergnioux_Cayley}}]
  The space of antisymmetric (or geometric) edges is $K_g^+ =
  \Ker(\Theta+\id)$. The space of symmetric edges is $K_g^- = \Ker(\Theta-\id)$.
\end{definition}

Recall the isomorphism
$c_0(\Gamma) = c_0{-}\bigoplus_{\alpha\in\Irr(\Gamma)} p_\alpha c_0(\Gamma)$
with $p_\alpha c_0(\Gamma) \simeq B(H_\alpha)$. The classical Cayley graph
introduced in the next Definition is an essential tool for the study of the
quantum Cayley graph.

\begin{definition}[{\cite[Definition~3.1, Lemma~4.4]{Vergnioux_Cayley}}]
  Denote $\Dd \subset \Irr(\Gamma)$ the subset such that
  $p_1 = \sum_{\alpha\in\Dd} p_\alpha$. The {\em classical Cayley graph} $G$
  associated to $(\Gamma,p_1)$ is given by
  \begin{itemize}
  \item the vertex set $G^{(0)} = \Irr(\Gamma)$,
  \item the edge set
    $G^{(1)} = \{(\alpha,\beta,\gamma,i) ~;~ \alpha,\beta\in\Irr(\Gamma),
    \gamma\in\Dd, 1\leq i\leq\dim\Hom(\beta,\alpha\otimes\gamma)\}$,
  \item the boundary map
    $e : G^{(1)} \to G^{(0)}\times G^{(0)}, (\alpha,\beta,\gamma,i) \mapsto
    (\alpha,\beta)$,
  \item the reversing map
    $\theta : G^{(1)} \to G^{(1)}, (\alpha,\beta,\gamma,i) \mapsto
    (\beta,\alpha,\bar\gamma,i)$.
  \end{itemize}
  The component $\gamma$ of an edge is called its {\em direction}. If $e$ is
  injective, and in particular if the classical Cayley graph $G$ is a tree,
  $G^{(1)}$ can and will be identified with
  $\{(\alpha,\beta) \in \Irr(\Gamma)^2 \mid \exists \gamma\in\Dd ~
  \beta\subset\alpha\otimes\gamma\}$.
  The {\em origin} of the classical Cayley graph $G$ is the trivial
  corepresentation $\tau$ and if $G$ is connected we denote
  $|\alpha| = d(\tau,\alpha)$ the distance of a vertex $\alpha$ to the origin.
\end{definition}

To any subset $A \subset G^{(0)}$ one can associate the projection
$p = \sum_{\alpha\in A} p_\alpha \in M(c_0(X^{(0)}))$.  One can
proceed similarly with $G^{(1)}$ and $c_0(X^{(1)})$, but one can also associate
to $B \subset G^{(0)}\times G^{(0)}$ the projection
$p = \sum_{(\alpha,\beta)\in B} E^*(p_\alpha\otimes p_\beta)E \in M(c_0(X^{(1)}))$, whose
image corresponds to the space of edges going from $\alpha$ to $\beta$. This
motivates the following definition.

\begin{definition}
  Assume that the classical Cayley graph $G$ associated to $(\Gamma,p_1)$ is a
  tree. For any $n \in\NN$ one puts $p_n =\sum_{|\alpha|=n} p_\alpha \in B(H)$
  and $p_n = \sum_{|\alpha|=n} p_\alpha\otimes p_1 \in B(K)$. The left and right
  projections onto ascending edges are
  $p_{\ss\jok+} = \sum_n E^*(p_n\otimes p_{n+1})E \in B(K)$ and
  $p_{\ss+\jok} = J^{(1)}p_{\ss\jok+}J^{(1)} \in B(K)$. The projections onto
  descending edges are $p_{\ss\jok-} = \sum_n E^*(p_n\otimes p_{n-1})E$ and
  $p_{\ss-\jok} = J^{(1)}p_{\ss\jok-}J^{(1)}$. Finally one puts
  $p_{\ss++} = p_{\ss+\jok} p_{\ss\jok+}$,
  $p_{\ss+-} = p_{\ss+\jok} p_{\ss\jok-}$,
  $p_{\ss-+} = p_{\ss-\jok} p_{\ss\jok+}$,
  $p_{\ss--} = p_{\ss-\jok} p_{\ss\jok-}$ and $K_{\ss++} = p_{\ss++} K$,
  $K_{\ss+-} = p_{\ss+-} K$, $K_{\ss-+} = p_{\ss-+} K$,
  $K_{\ss--} = p_{\ss--} K$.
\end{definition}

In the classical case one has $p_{\ss+\jok} = p_{\ss\jok+}$ so that
$p_{\ss+-} = p_{\ss-+} = 0$. However in the quantum case $p_{\ss+-}$ and
$p_{\ss-+}$ are in general non-zero, and this is related to the non involutivity
of the reversing operator. The following proposition shows indeed that
$p_{\ss+-}\Theta p_{\ss+-}$ acts as a right shift in the decomposition
$K_{\ss+-} = \bigoplus p_n K_{\ss+-}$.

\begin{proposition}[{\cite[Proposition~4.3]{Vergnioux_Cayley}}]\label{app_prop_orient}
  Assume that the classical Cayley graph associated to $(\Gamma,p_1)$ is a
  tree. Then we have
  \begin{itemize}
  \item $p_{\ss\jok+}+p_{\ss\jok-} = \id$ and $p_{\ss+\jok}+p_{\ss-\jok} = \id$,
    $[p_{\ss\jok\pm},p_{\ss\pm\jok}] = 0$,
    $[p_{\ss\pm\jok},p_n] = [p_{\ss\jok\pm},p_n] = 0$,
  \item $\Theta p_{\ss+\jok} p_n = p_{n+1}p_{\ss\jok-}\Theta$ and
    $\Theta p_{\ss-\jok} p_n = p_{n-1}p_{\ss\jok+}\Theta$,
   $p_{\ss\jok -} = \Theta p_{\ss+\jok}\Theta^*$ and
    $p_{\ss-\jok} = \Theta^*p_{\ss\jok+}\Theta$,
  \item $E_2 p_{\ss+-} = E_2 p_{\ss-+} = 0$ and
    $p_n E_2 = E_2 p_{n-1} p_{\ss++} + E_2 p_{n+1}p_{\ss--}$.
  \end{itemize}
  We denote $r = - p_{\ss+-}\Theta p_{\ss+-}$ and
  $s = p_{\ss+-}\Theta p_{\ss++}$.
\end{proposition}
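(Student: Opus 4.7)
The strategy is to unfold each definition in terms of the corepresentation decomposition of $H$ and the explicit formulas for $\Theta$ and $E=V$ from Proposition~\ref{app_reverse_target}, and then read off each identity from the tree structure of the classical Cayley graph $G$. Throughout, the combinatorial input is that the image of $E$ in $H\otimes H$ is spanned by $p_\alpha H\otimes p_\beta H$ with $(\alpha,\beta)\in G^{(1)}$, which in the tree case forces $|\beta|=|\alpha|\pm 1$.

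For the first bullet I would rewrite
\[p_{\ss\jok+}+p_{\ss\jok-}=E^*\Bigl(\sum_{n}p_n\otimes(p_{n+1}+p_{n-1})\Bigr)E.\]
The inner projection acts as the identity on $E(K)$ by the tree property, so the sum equals $E^*E=\id_K$ (since $E=V|_K$ is an isometry on $K$). Applying the isometric involution $J^{(1)}$, which preserves levels thanks to $UJ=JU$ and hence $J(p_\alpha H)=p_{\bar\alpha}H$ with $|\bar\alpha|=|\alpha|$, yields the analogous identity for $p_{\ss\pm\jok}$. All four orientation projections together with the level projections $p_n$ live in the abelian subalgebra of $B(K)$ generated by the family $\{E^*(p_\alpha\otimes p_\beta)E\}_{(\alpha,\beta)\in G^{(1)}}$ and its $J^{(1)}$-conjugate, both of which are diagonal with respect to the corepresentation decomposition. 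Pairwise commutativity follows.

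For the second bullet, I would use Proposition~\ref{app_reverse_target} to show that $\Theta$ implements, at the Hilbert-space level, the combinatorial reversal $\theta:(\alpha,\beta,\gamma,i)\mapsto(\beta,\alpha,\bar\gamma,i)$: it exchanges source and target. Consequently a vector that is left-ascending (in $p_{\ss+\jok}$) with left endpoint at level $n$ is sent by $\Theta$ to a vector that is right-descending with left endpoint at level $n+1$; this is precisely $\Theta p_{\ss+\jok}p_n=p_{n+1}p_{\ss\jok-}\Theta$. Summing over $n$ (using $\sum_n p_n=\id_K$) gives $\Theta p_{\ss+\jok}=p_{\ss\jok-}\Theta$, whence $p_{\ss\jok-}=\Theta p_{\ss+\jok}\Theta^*$. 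The second identity on this row is obtained symmetrically.

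For the third bullet, recall $E_2(x\xi_0\otimes y\xi_0)=xy\xi_0$, so that $E_2$ extracts the target endpoint. A vector in $p_{\ss+-}K$ is simultaneously left-ascending (via the $J^{(1)}$-conjugation and the level-invariance of $J$, its target lies above its source) and right-descending (its target lies below its source); these conflicting constraints on the target yield $E_2p_{\ss+-}=0$, and symmetrically $E_2p_{\ss-+}=0$. Inserting $\id=p_{\ss++}+p_{\ss+-}+p_{\ss-+}+p_{\ss--}$ after $E_2$ and discarding the vanishing terms produces $p_nE_2=E_2p_{n-1}p_{\ss++}+E_2p_{n+1}p_{\ss--}$. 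The principal obstacle is the careful bookkeeping of the four orientation conventions and their interaction with $J^{(1)}$, the tree property, and the non-involutivity of $\Theta$; this is especially delicate in the third bullet, where $p_{\ss+-}$ and $p_{\ss-+}$ need not themselves vanish in the genuinely quantum case, although their images under the target map $E_2$ do.
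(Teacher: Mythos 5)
You should first note that the paper itself gives no proof of this statement: it is imported verbatim from \cite[Proposition~4.3]{Vergnioux_Cayley}, so your sketch can only be judged on its own terms. Your overall strategy — decompose everything along the classical Cayley graph, use that $E=V$ is unitary, and use that the tree property forces $|\beta|=|\alpha|\pm1$ on every edge — is the right one, and your treatment of $p_{\ss\jok+}+p_{\ss\jok-}=\id$ and of $[p_{\ss\pm\jok},p_n]=[p_{\ss\jok\pm},p_n]=0$ is fine. However, at each of the three points where the statement is genuinely quantum (i.e.\ not visible in the classical case), the justification you offer would not go through as written.

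First, for $[p_{\ss\jok\pm},p_{\ss\pm\jok}]=0$: saying both families are ``diagonal with respect to the corepresentation decomposition'' only gives that they commute with the block projections $p_\alpha\otimes p_\gamma$; it does not make them commute with each other inside a block, since the blocks $B(H_\alpha)\otimes B(H_\gamma)$ are noncommutative. The actual reason is that $E^*(p_\alpha\otimes p_\beta)E=(p_\alpha\otimes 1)\Delta(p_\beta)$ lies in $M(c_0(\Gamma)\otimes c_0(\Gamma))$, acting by ``left multiplication'' on each block, while its $J^{(1)}$-conjugate acts by right multiplication, i.e.\ lies in the commutant; this needs to be said. Second, and more seriously, the entire content of the second bullet is that conjugation by $\Theta$ exchanges a \emph{left}-oriented projection with a \emph{right}-oriented one of the opposite sign. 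The heuristic ``$\Theta$ exchanges source and target'' cannot detect this: in the classical case, where that heuristic is an actual computation, left and right orientations coincide, so your argument cannot distinguish the stated identity $\Theta p_{\ss+\jok}p_n=p_{n+1}p_{\ss\jok-}\Theta$ from the a priori different identity with $p_{\ss\jok+}$ on the left-hand side. Establishing the stated version requires a genuine computation relating $\Theta=\Sigma(1\otimes U)V(U\otimes U)\Sigma$ to $J^{(1)}$ and $E$. Third, in the last bullet you must explain why the \emph{left} orientation of an edge constrains the level of its target $E_2\xi$; the appeal to ``$J^{(1)}$-conjugation and level-invariance of $J$'' does not connect to $E_2$ at all. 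The clean route is to use $E_2=E_1\Theta$ from Proposition~\ref{app_reverse_target}, the fact that $E_1$ reads off the first leg and hence intertwines the $p_n$, and the (already established) second bullet: then $E_2p_{\ss+-}p_n=E_1\Theta p_{\ss+\jok}p_{\ss\jok-}p_n$ has image in $p_{n+1}H$, while the right-descending condition forces the image into $p_{n-1}H$, whence it vanishes, and the same bookkeeping yields $p_nE_2=E_2p_{n-1}p_{\ss++}+E_2p_{n+1}p_{\ss--}$. With these three repairs your outline becomes a correct proof.
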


In the rest of the Appendix we consider the case of a free product of orthogonal
and unitary free quantum groups:
$\Gamma = F = \FO(Q_1)*\cdots*\FO(Q_k)*\FU(R_1)*\cdots*\FU(R_l)$, endowed with
the projection $p_1 = \sum_{\alpha\in\Dd} p_\alpha$ associated to the set $\Dd$
of fundamental representations of the factors $\FO(Q_i)$, $\FU(R_j)$, together
with their duals. This is a little bit stronger than requiring the classical
Cayley graph $G$ to be a tree, see \cite[Proposition~4.5]{Vergnioux_Cayley}. In
that case we have the following useful facts. Note that the identities
$(p_{\ss++}+p_{\ss--})\Theta^n(p_{\ss++}+p_{\ss--}) =
(p_{\ss++}+p_{\ss--})\Theta^{-n}(p_{\ss++}+p_{\ss--})$
can be interpreted as a weak involutivity property.

\begin{proposition}[{\cite[Proposition~4.7, Proposition~5.1]{Vergnioux_Cayley}}]\label{app_prop_special}
  Consider the quantum Cayley graph of $(F,p_1)$. Then the restriction
  $E_2 : K_{\ss++} \to H$ is injective and we have
  $(p_{\ss++}+p_{\ss--})\Theta^n(p_{\ss++}+p_{\ss--}) =
  (p_{\ss++}+p_{\ss--})\Theta^{-n}(p_{\ss++}+p_{\ss--})$
  for all $n$. 
  % If moreover $F = \FO(Q)$, the restriction
  % $E_2 : K_{\ss--} \to H$ is injective as well.
\end{proposition}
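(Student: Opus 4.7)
The plan is to exploit the fact that for the free product $F$, the classical Cayley graph is a genuine tree and the reflection operator $W$ behaves particularly simply on the ``$\pm\pm$'' subspaces. Both claims should follow from the rigid combinatorics of the free product fusion rules combined with the basic $W$-identities (isometric, involutive, identity on $K_{\ss++}\oplus K_{\ss--}$, swapping $K_{\ss+-}$ and $K_{\ss-+}$, conjugating $\Theta$ into $\Theta^*$).

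For the second statement I would argue via $W$ directly. Setting $P := p_{\ss++}+p_{\ss--}$, the identities $W^2=\id$ and $W\Theta W=\Theta^*$ yield $(\Theta^{-1})^n=(\Theta^*)^n=W\Theta^n W$, and since $W$ preserves both summands of the orthogonal decomposition $K = \Img(P)\oplus \Img(P)^\bot$ we have $WP=PW$. Consequently
\begin{equation*}
  P\Theta^{-n}P = PW\Theta^n W P = W P\Theta^n P W.
\end{equation*}
Because $P\Theta^n P$ maps into $\Img(P)$, kills $\Img(P)^\bot$, and because $W$ acts as the identity on $\Img(P)$ and separately preserves $\Img(P)^\bot$, a short verification on the two summands shows $W(P\Theta^n P)W = P\Theta^n P$, which is the desired weak involutivity.

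For the first statement I would proceed blockwise using the Peter-Weyl decomposition $H = \bigoplus_{\alpha\in\Irr(F)} p_\alpha H$. The explicit formula $E_2(x\xi_0\otimes y\xi_0)=xy\xi_0$ shows that $E_2$ restricted to $p_\alpha H\otimes p_\gamma H$ (with $\gamma\in\Dd$) is governed by the intertwiners $H_\beta\hookrightarrow H_\alpha\otimes H_\gamma$ for $\beta\subset\alpha\otimes\gamma$. The tree structure of the classical Cayley graph of $F$ ensures a \emph{unique} $\beta$ with $|\beta|=|\alpha|+1$, namely the reduced-word concatenation, which is the irreducible target of the unique ascending edge out of $\alpha$ in direction $\gamma$. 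I would then identify the subspace $p_{\ss++}(p_\alpha H\otimes p_\gamma H)$ with the image of this ascending intertwiner (transported through $p_\alpha H\simeq H_\alpha\otimes\bar H_\alpha$), so that $E_2$ becomes the corresponding isometric intertwiner and is injective on each block; summing over $(\alpha,\gamma)$ yields injectivity on $K_{\ss++}$.

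The main obstacle will be the first claim, specifically the identification of $p_{\ss++}(p_\alpha H\otimes p_\gamma H)$ with the image of the ascending intertwiner. The second orientation condition defining $p_{\ss+\jok}=J^{(1)}p_{\ss\jok+}J^{(1)}$ involves modular data and quantum-dimension weights in the non-Kac case, and one must verify carefully that the intersection with $p_{\ss\jok+}$ cuts out exactly the ``new'' subrepresentation at length $|\alpha|+1$, with no spurious degeneracies coming from shorter components of $\alpha\otimes\gamma$. Once this blockwise picture is in place the injectivity is routine, but getting there cleanly requires the explicit corepresentation-theoretic computations carried out in \cite{Vergnioux_Cayley}. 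By contrast, given the formal properties of $W$, the weak involutivity is essentially automatic.
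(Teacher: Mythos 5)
This proposition is not proved in the paper at all: it is recalled verbatim from \cite{Vergnioux_Cayley} (Propositions~4.7 and~5.1 there), so there is no in-paper argument to compare against, and your proposal has to stand on its own. It does not, for two reasons. First, your derivation of the weak involutivity from the reflection operator $W$ is formally correct given the properties $W^2=\id$, $W\Theta W=\Theta^*$ and $WP=PW=P$ for $P=p_{\ss++}+p_{\ss--}$ (indeed it collapses to the one line $P\Theta^{-n}P=PW\Theta^nWP=P\Theta^nP$), but it is circular relative to the only available source for those properties: $W$ is constructed in \cite[Lemma~5.2]{Vergnioux_Cayley}, i.e.\ \emph{after} Proposition~5.1, and the well-definedness and isometry of $w$ on the vectors $(p_{\ss+-}\Theta)^np_{\ss++}\xi$, matched against $(p_{\ss-+}\Theta^{-1})^np_{\ss++}\xi$, is exactly the kind of comparison between powers of $\Theta$ and of $\Theta^{-1}$ that the weak involutivity is needed to justify. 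So you cannot take $W\Theta W=\Theta^*$ as an input here; the intended route runs the other way, through the boundary operators ($E_1\Theta=E_2$, $E_2\Theta=E_1$, $p_nE_2=E_2p_{n-1}p_{\ss++}+E_2p_{n+1}p_{\ss--}$) together with the injectivity of $E_2$ on $K_{\ss++}$ --- that is, the first claim of the proposition is an ingredient in the proof of the second, not the reverse.

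Second, for the injectivity of $E_2$ on $K_{\ss++}$ your blockwise outline is the right picture (uniqueness and multiplicity one of the length-$(|\alpha|+1)$ component of $\alpha\otimes\gamma$ in the free product fusion rules), but the decisive step --- showing that $p_{\ss++}(p_\alpha H\otimes p_\gamma H)$ is exactly the image of that ascending intertwiner, which requires controlling the left projection $p_{\ss+\jok}=J^{(1)}p_{\ss\jok+}J^{(1)}$ and ruling out degeneracies from the shorter components --- is precisely the content of \cite[Proposition~4.7]{Vergnioux_Cayley} and is left unproven; you acknowledge as much. As written, the proposal defers the one genuinely substantive point to the reference and proves the other from a statement that logically depends on it.
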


% \begin{proof}
%   The last statement does not appear in \cite{Vergnioux_Cayley,Vergnioux_Paths}
%   but is proved in the same way as \cite[Proposition~4.7]{Vergnioux_Cayley}:
%   $E_2$ is a multiple of an isometry on $(p_\alpha\otimes p_\gamma)K_{\ss--}$
%   since $\alpha\otimes\gamma$ has a unique subobject $(\alpha\otimes\gamma)_-$
%   of length $|\alpha|-1$, and the corresponding scalar is non-zero. Moreover in
%   the case of $\FO(Q)$ there is only one direction $\gamma$ and the subobjects
%   $(\alpha\otimes\gamma)_-$ are pairwise inequivalent when $\alpha$ varies.
% \end{proof}

Here is an example of computation using the rules above:

\begin{lemma}
  In the quantum Cayley graph of $(F,p_1)$ we have 
  \begin{align} \label{app_lem_eq_--+-}
    p_{\ss--}\Theta p_{\ss++}\Theta^* p_{\ss+-} &=
    - p_{\ss--}\Theta p_{\ss+-}\Theta^* p_{\ss+-} \\
    \label{app_lem_eq_+++-}
    p_{\ss++}\Theta p_{\ss--}\Theta p_{\ss+-} &= 
    - p_{\ss++}\Theta^*p_{\ss+-}\Theta p_{\ss+-} \\
    \label{app_lem_eq_+-+-}
    p_{\ss+-}\Theta p_{\ss++}\Theta^* p_{\ss+-} &= 
    p_{\ss+-} - p_{\ss+-}\Theta p_{\ss+-}\Theta^* p_{\ss+-} \\
    \label{app_lem_eq_+-+-prime}
    p_{\ss+-}\Theta^*p_{\ss--}\Theta p_{\ss+-} &= 
    p_{\ss+-} - p_{\ss+-}\Theta^* p_{\ss+-} \Theta p_{\ss+-}.
    % \\ \label{app_lem_eq_++++}
    % p_{\ss++}\Theta p_{\ss--}\Theta p_{\ss++} &=
    % p_{\ss++} - p_{\ss++}\Theta^*p_{\ss+-}\Theta p_{\ss++} 
  \end{align}
\end{lemma}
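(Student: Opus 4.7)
The plan is to prove each of the four identities by regrouping the two summands to assemble one of the ``saturated'' projections $p_{\ss+\jok}=p_{\ss++}+p_{\ss+-}$ or $p_{\ss\jok-}=p_{\ss+-}+p_{\ss--}$ in the middle, and then to exploit the conjugation identity $\Theta p_{\ss+\jok}\Theta^{*}=p_{\ss\jok-}$ from Proposition~\ref{app_prop_orient}, together with its unitary companion $\Theta^{*}p_{\ss\jok-}\Theta=p_{\ss+\jok}$. All remaining simplifications will reduce to trivial products involving the four pairwise commuting projections $p_{\ss\pm\jok}$, $p_{\ss\jok\pm}$.

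For identities~\eqref{app_lem_eq_--+-}, \eqref{app_lem_eq_+-+-} and~\eqref{app_lem_eq_+-+-prime} the grouping can be carried out directly because the $\Theta$'s flanking the middle factor already match. Combining the two sides of~\eqref{app_lem_eq_--+-} produces $p_{\ss--}\Theta\,p_{\ss+\jok}\Theta^{*}p_{\ss+-}=p_{\ss--}p_{\ss\jok-}p_{\ss+-}$, which vanishes because $p_{\ss--}p_{\ss+-}=0$ (via $p_{\ss-\jok}p_{\ss+\jok}=0$). Identities \eqref{app_lem_eq_+-+-} and \eqref{app_lem_eq_+-+-prime} become respectively $p_{\ss+-}\Theta\,p_{\ss+\jok}\Theta^{*}p_{\ss+-}=p_{\ss+-}$ and $p_{\ss+-}\Theta^{*}p_{\ss\jok-}\Theta\,p_{\ss+-}=p_{\ss+-}$, which after substituting the two conjugation identities reduce to the trivial $p_{\ss+-}p_{\ss\jok-}p_{\ss+-}=p_{\ss+-}$ and $p_{\ss+-}p_{\ss+\jok}p_{\ss+-}=p_{\ss+-}$.

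The real obstacle is identity~\eqref{app_lem_eq_+++-}, since its two summands carry $\Theta$ and $\Theta^{*}$ respectively in the leftmost slot and cannot be combined as they stand. My plan here is to first interchange $\Theta$ and $\Theta^{*}$ on that slot using the weak involutivity of Proposition~\ref{app_prop_special} in the case $n=1$. Expanding
\[
(p_{\ss++}+p_{\ss--})\Theta(p_{\ss++}+p_{\ss--})=(p_{\ss++}+p_{\ss--})\Theta^{*}(p_{\ss++}+p_{\ss--})
\]
and observing that all four diagonal terms $p_{\ss\pm\pm}\Theta\,p_{\ss\pm\pm}$ and $p_{\ss\pm\pm}\Theta^{*}p_{\ss\pm\pm}$ vanish --- this follows from the shift rules of Proposition~\ref{app_prop_orient}, which give $\Theta p_{\ss+\jok}\subset p_{\ss\jok-}K$ and $\Theta p_{\ss-\jok}\subset p_{\ss\jok+}K$, together with the dual inclusions $\Theta^{*}p_{\ss\jok+}\subset p_{\ss-\jok}K$ and $\Theta^{*}p_{\ss\jok-}\subset p_{\ss+\jok}K$ derived from the conjugation identities --- the equation collapses to the equality of off-diagonal terms. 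Projecting onto $p_{\ss++}K$ then yields the key identity $p_{\ss++}\Theta\,p_{\ss--}=p_{\ss++}\Theta^{*}p_{\ss--}$. Substituting it into the left-hand side of~\eqref{app_lem_eq_+++-}, the identity reduces to $p_{\ss++}\Theta^{*}(p_{\ss--}+p_{\ss+-})\Theta\,p_{\ss+-}=p_{\ss++}\Theta^{*}p_{\ss\jok-}\Theta\,p_{\ss+-}=p_{\ss++}p_{\ss+\jok}p_{\ss+-}=0$, the last equality holding because $p_{\ss\jok+}p_{\ss\jok-}=0$.
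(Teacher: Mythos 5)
Your proof is correct and follows essentially the same route as the paper: sum the two terms so that the middle projections assemble into a saturated projection, apply the conjugation identities of Proposition~\ref{app_prop_orient} (equivalently $\Theta\Theta^*=\id$), and invoke the weak involutivity of Proposition~\ref{app_prop_special} to swap $\Theta$ for $\Theta^*$ in the second identity. The only differences are cosmetic: you spell out the derivation of $p_{\ss++}\Theta p_{\ss--}=p_{\ss++}\Theta^*p_{\ss--}$ and prove~\eqref{app_lem_eq_+-+-} directly, where the paper cites an earlier computation.
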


\begin{proof}
  For the first identity it suffices to use~\ref{app_prop_orient} and write
  $p_{\ss--}\Theta p_{\ss++}\Theta^* p_{\ss+-} + p_{\ss--}\Theta
  p_{\ss+-}\Theta^* p_{\ss+-} = p_{\ss--}\Theta p_{\ss\jok+}\Theta^* p_{\ss+-} +
  p_{\ss--}\Theta p_{\ss\jok-}\Theta^* p_{\ss+-} = p_{\ss--}\Theta\Theta^*
  p_{\ss+-} = 0$.
  The second identity is proved similarly after replacing
  $p_{\ss++}\Theta p_{\ss--}$ with $p_{\ss++}\Theta^* p_{\ss--}$ on the
  left-hand side thanks to~\ref{app_prop_special}. The third identity appears
  already in the proof of \cite[Proposition~6.2]{Vergnioux_Cayley}. For the last
  one we note that, according to~\ref{app_prop_orient},
  $p_{\ss+-}\Theta^* p_{\ss--} \Theta p_{\ss+-} = p_{\ss+-}\Theta^* p_{\ss-\jok}
  \Theta p_{\ss+-}$
  and
  $p_{\ss+-}\Theta^* p_{\ss+-} \Theta p_{\ss+-} = p_{\ss+-}\Theta^* p_{\ss+\jok}
  \Theta p_{\ss+-}$.
  Adding both operators we obtain
  $p_{\ss+-}\Theta^* \Theta p_{\ss+-} = p_{\ss+-}$.
  % For the last one we have, again similarly:
  % $p_{\ss++}\Theta p_{\ss--}\Theta p_{\ss++} + p_{\ss++}\Theta^*p_{\ss+-}\Theta
  % p_{\ss++} = p_{\ss++}\Theta^* p_{\ss-\jok}\Theta p_{\ss++} +
  % p_{\ss++}\Theta^*p_{\ss+\jok}\Theta p_{\ss++} = p_{\ss++}\Theta^*\Theta p_{\ss++}$.
\end{proof}

The subspaces $K_{\ss+-}$ and $K_{\ss-+}$ are strongly connected to each other
through the {\em reflection operator} $W$ introduced as follows.

\begin{proposition}[{\cite[Lemma~5.2]{Vergnioux_Cayley}}]\label{app_def_W}
  Consider the quantum Cayley graph of $(F,p_1)$. There exists a unique unitary
  operator $w : K_{\ss+-} \to K_{\ss-+}$ such that
  $w (p_{\ss+-}\Theta)^n p_{\ss++} = (p_{\ss-+}\Theta^{-1})^n p_{\ss++}$ for all
  $n\geq 1$. We denote
  $W = w p_{\ss+-} + w^* p_{\ss-+} + p_{\ss++} + p_{\ss--} \in B(K)$: this is an
  involutive unitary operator such that $W\Theta W=\Theta^*$,
  $W p_{\ss++} = p_{\ss++}$, $W p_{\ss--} = p_{\ss--}$ and
  $W p_{\ss+-} = p_{\ss-+} W$.
\end{proposition}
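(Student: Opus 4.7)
The plan is to construct $w$ as an isometry on a dense subspace of $K_{\ss+-}$, then extend and repackage it as $W$. The key analytic input is the weak involutivity property of $\Theta$ from Proposition~\ref{app_prop_special}, which is precisely what forces the inner products on both sides of the prescribed identity to agree.

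First, I would observe that for any $\xi\in K$ and $n\geq 1$, the vector $(p_{\ss+-}\Theta)^n p_{\ss++}\xi$ automatically lies in $K_{\ss+-}$; symmetrically, $(p_{\ss-+}\Theta^{-1})^n p_{\ss++}\xi\in K_{\ss-+}$. Here one uses the vanishings $p_{\ss-+}\Theta p_{\ss++}=0$ and $p_{\ss+-}\Theta^{-1}p_{\ss++}=0$, both deducible from the orientation relations $\Theta p_{\ss+\jok} = p_{\ss\jok-}\Theta p_{\ss+\jok}$ and $\Theta p_{\ss-\jok} = p_{\ss\jok+}\Theta p_{\ss-\jok}$ of Proposition~\ref{app_prop_orient}. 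One then verifies that the family $\{(p_{\ss+-}\Theta)^n p_{\ss++}\xi:n\geq 1,\xi\in K\}$ is total in $K_{\ss+-}$: decomposing $K_{\ss+-}=\bigoplus_{n\geq 1}p_n K_{\ss+-}$, an induction on $n$ shows that $(p_{\ss+-}\Theta)^{n}$ lands in $p_nK_{\ss+-}$ with dense range, thanks to the injectivity of $E_2$ on $K_{\ss++}$ asserted in Proposition~\ref{app_prop_special}.

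The heart of the proof is the isometry identity
\begin{equation*}
\langle (p_{\ss+-}\Theta)^n p_{\ss++}\xi,\ (p_{\ss+-}\Theta)^m p_{\ss++}\eta \rangle = \langle (p_{\ss-+}\Theta^{-1})^n p_{\ss++}\xi,\ (p_{\ss-+}\Theta^{-1})^m p_{\ss++}\eta \rangle.
\end{equation*}
I would expand both inner products by inserting the resolution $\id = p_{\ss++}+p_{\ss+-}+p_{\ss-+}+p_{\ss--}$ between every pair of consecutive $\Theta$'s, and eliminate the inadmissible routes using the vanishings $p_{\ss+-}\Theta p_{\ss-+}=p_{\ss-+}\Theta p_{\ss+-}=0$ together with $p_{\ss-+}\Theta p_{\ss++}=p_{\ss+-}\Theta^{-1}p_{\ss++}=0$ from Proposition~\ref{app_prop_orient}. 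After this pruning both sides become the same formal sum of matrix coefficients of the shape $\langle\xi, p_{\ss++}(p_{\ss++}+p_{\ss--})\Theta^{\pm k}(p_{\ss++}+p_{\ss--})p_{\ss++}\eta\rangle$, with opposite signs on the exponents on the two sides; the symmetry $(p_{\ss++}+p_{\ss--})\Theta^k(p_{\ss++}+p_{\ss--})=(p_{\ss++}+p_{\ss--})\Theta^{-k}(p_{\ss++}+p_{\ss--})$ of Proposition~\ref{app_prop_special} closes the computation. This shows the prescription $w(p_{\ss+-}\Theta)^n p_{\ss++}\xi := (p_{\ss-+}\Theta^{-1})^n p_{\ss++}\xi$ is well-defined and isometric, hence extends to a unique isometry $w$; surjectivity follows from the symmetric density statement for $K_{\ss-+}$.

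Having constructed $w$, I would set $W = wp_{\ss+-}+w^*p_{\ss-+}+p_{\ss++}+p_{\ss--}$. Its block structure immediately yields $W^*=W=W^{-1}$, $Wp_{\ss\pm\pm}=p_{\ss\pm\pm}$, and $Wp_{\ss+-}=p_{\ss-+}W$. The remaining identity $W\Theta W = \Theta^*$ is verified block-wise between the four subspaces $K_{\ss\pm\pm}$: the blocks with source and target in $K_{\ss++}\oplus K_{\ss--}$ reduce to the $n=1$ case of Proposition~\ref{app_prop_special}; the mixed blocks with source in $K_{\ss++}\oplus K_{\ss--}$ and target in $K_{\ss+-}\oplus K_{\ss-+}$ (or vice versa) reduce to the defining relation for $w$ at $n=1$; finally the two blocks on $K_{\ss+-}\oplus K_{\ss-+}$ reduce to the $n=2$ case of the defining relation, obtained by applying the $n=1$ verification on a vector already of the form $p_{\ss+-}\Theta p_{\ss++}\xi$. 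The main obstacle throughout is the combinatorial bookkeeping in the isometry step: identifying the admissible sign-sequences on both sides and matching them term-by-term via the Proposition~\ref{app_prop_special} sign-flip symmetry. Once this matching is set up cleanly, the remainder of the proof is formal.
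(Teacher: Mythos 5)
This proposition is one of the background results that the paper explicitly \emph{recalls without proof} from \cite[Lemma~5.2]{Vergnioux_Cayley} (the whole Appendix is a summary of \cite{Vergnioux_Cayley,Vergnioux_Paths}), so there is no in-paper argument to compare against. That said, your reconstruction identifies the correct engine: the Gram-matrix identity $\langle (p_{\ss+-}\Theta)^n p_{\ss++}\xi,(p_{\ss+-}\Theta)^m p_{\ss++}\eta\rangle=\langle (p_{\ss-+}\Theta^{-1})^n p_{\ss++}\xi,(p_{\ss-+}\Theta^{-1})^m p_{\ss++}\eta\rangle$, proved by pruning with the orientation rules of Proposition~\ref{app_prop_orient} until only blocks of the form $p_{\epsilon}\Theta^{\pm k}p_{\epsilon'}$ with $\epsilon,\epsilon'\in\{{\ss++},{\ss--}\}$ survive, and then flipping signs with the weak involutivity of Proposition~\ref{app_prop_special}. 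I checked this mechanism on the cases $n=m=1$, $n=1,m=2$ and $n=m=2$ and it does close; the surviving terms really are signed products of such blocks, matched term by term between the two sides.

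Two steps as written do not hold up, though neither is fatal. First, the totality of $\{(p_{\ss+-}\Theta)^n p_{\ss++}\xi\}$ in $K_{\ss+-}$ does not follow from the injectivity of $E_2$ on $K_{\ss++}$ --- injectivity of a map out of $K_{\ss++}$ says nothing about ranges inside $K_{\ss+-}$, and a nonzero multiple of an isometry between graded pieces need not be onto. The correct tool is Equation~\eqref{app_lem_eq_+-+-}, $ss^*+rr^*=p_{\ss+-}$: any $\zeta\in K_{\ss+-}$ orthogonal to $\Img s$ and $\Img r$ satisfies $\|\zeta\|^2=\|s^*\zeta\|^2+\|r^*\zeta\|^2=0$, and since $(p_{\ss+-}\Theta)^np_{\ss++}=(-r)^{n-1}s$ an induction along the grading gives density. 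Second, the verification of $W\Theta W=\Theta^*$ is heavier than ``the $n=1$ and $n=2$ cases of the defining relation.'' On the block $K_{\ss+-}$, for instance, one must show $p_{\ss-+}\Theta\, w A_n\xi = w\, p_{\ss+-}\Theta^* A_n\xi$ with $A_n=(p_{\ss+-}\Theta)^np_{\ss++}$; the vector $p_{\ss+-}\Theta^* A_n\xi$ is a backtracking expression, equal to $(p_{\ss+-}-s's^{\prime*})A_{n-1}\xi$ by Equation~\eqref{app_lem_eq_+-+-prime}, so one needs new identities of the type $A_n^*\Theta^{\pm1}A_m=B_n^*\Theta^{\mp1}B_m$ (and how $w$ intertwines $s's^{\prime*}$ with its mirror) rather than an instance of the relation defining $w$. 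These all succumb to the same pruning-plus-weak-involutivity method, but they must be stated and proved; as it stands this part of your argument is an assertion rather than a proof.
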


The families of projections $\{p_n\}$,
$\{p_{\ss++},p_{\ss+-},p_{\ss-+},p_{\ss--}\}$ form two commuting partitions of
the unit in $B(K)$. To perform the most precise analysis of $\Theta$ one needs
to further decompose the space $K$. Observe that there are $4$ commuting
representations of $c_0(\Gamma)$ on $K$, given by
$\pi_4 : c_0(\Gamma)^{\otimes 4} \to B(K) = B(H\otimes p_1H), x\otimes y\otimes
y'\otimes x' \mapsto (x\otimes y)(Ux'U\otimes Uy'U)$.
In the case of $\FO_N$, the subspaces $p_n K \simeq B(H_n\otimes H_1)$ are
irreducible with respect to $\pi_4$, and the subspaces
$p_{\ss\pm\pm} p_n K \simeq B(H_{n\pm 1},H_{n\pm 1})$ are irreducible with
respect to the representation $\pi_4 \circ (\Delta\otimes\Delta)$ of
$c_0(\Gamma)\otimes c_0(\Gamma)$. The reversing operator $\Theta$ does not
commute to these representations, but it does commute to $\pi_4 \circ \Delta^3$
\cite[Proposition~3.7]{Vergnioux_Cayley} and we consider:

\begin{definition}
  Denote $q_l = \pi_4\Delta^3(p_{2l}) \in B(K)$. We have $\sum q_l =\id$ and
  $[q_l,p_k] = [q_l,p_{\ss\pm\pm}]=0$. We have $p_{\ss++}q_lp_k \neq 0$ {\bf
    iff} $0\leq l\leq k+1$, $p_{\ss\pm\mp}q_lp_k \neq 0$ {\bf
    iff} $1\leq l\leq k$ and $p_{\ss--}q_lp_k \neq 0$ {\bf
    iff} $0\leq l\leq k-1$.
\end{definition}

The subspace $q_0 K$ is the ``classical subspace'' of the space of edges, see
\cite[Remarks~6.4]{Vergnioux_Cayley} and
\cite[Reminder~4.3]{Vergnioux_Paths}. In particular $\Theta^2 = \id$ on $q_0K$.

\begin{proposition}[{\cite[Lemma~6.3]{Vergnioux_Cayley}}]
  \label{app_weights}
  Consider the quantum Cayley graph of $(\FO_N,p_1)$. For all choices of signs
  the operator $p_{\ss\pm\pm}\Theta p_{\ss\pm\pm}$ is a multiple of an isometry
  on each subspace $q_l p_{\ss\pm\pm} p_n K$ and we have, for $\mu$, $\nu$,
  $\mu'$, $\nu' \in \{+,-\}$ and $p_{\ss\mu'\nu'}p_{k} q_l \neq 0$:
  \begin{align*}
    \|p_{\ss\mu\nu}\Theta p_{\ss\mu'\nu'}p_k q_l\| = \left\{
      \begin{array}{ll}
        c_{k+1,l} & \text{if } \mu\nu=\mu'\nu' \text{ and } \mu'=+ \\
        s_{k+1,l} & \text{if } \mu\nu\neq\mu'\nu' \text{ and } \mu'=+
      \end{array} \right. \\
        \|p_{\ss\mu\nu}\Theta p_{\ss\mu'\nu'}p_{k} q_l\| = \left\{
      \begin{array}{ll}
        c_{k,l} & \text{if } \mu\nu=\mu'\nu' \text{ and } \mu'=- \\
        s_{k,l} & \text{if } \mu\nu\neq\mu'\nu' \text{ and } \mu'=-
      \end{array}
      \right.
  \end{align*}
  where
  $s_{k,l}^2 = \ds\frac{\qdim(\alpha_l) \qdim(\alpha_{l-1})} {\qdim(\alpha_k)
    \qdim(\alpha_{k-1})}$
  and $c_{k,l}^2 + s_{k,l}^2 = 1$, with the convention that
  $\qdim(\alpha_{-1})=0$.
\end{proposition}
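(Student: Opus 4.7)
My plan is to combine equivariance under a commuting quantum group action with an explicit Clebsch--Gordan-type calculation using the formula for $\Theta$ given in Proposition~\ref{app_reverse_target} and the $\FO_N$ fusion rules.

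First, I would exploit the fact (recalled just before the definition of $q_l$) that $\Theta$ commutes with the representation $\pi_4\circ\Delta^3$ of $c_0(\FO_N)\otimes c_0(\FO_N)$ on $K$, whereas $p_k$, $p_{\ss\mu\nu}$ and $q_l$ all lie in its commutant: for the length and orientation projections this follows from Proposition~\ref{app_prop_orient}, and for $q_l$ by definition, since $q_l$ projects onto an isotypic component of $\pi_4\Delta^3$. For $\FO_N$, each nonzero subspace $q_l p_{\ss\mu\nu} p_k K$ is an \emph{irreducible} subrepresentation of $\pi_4\Delta^3$ (this is precisely the design of the $q_l$ decomposition, ensuring that the ``classical'' irreducibility on $p_n K\simeq B(H_n\otimes H_1)$ refines correctly under the further $\Delta\otimes\Delta$ decomposition into four ascending/descending blocks). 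By Schur's lemma, $p_{\ss\mu\nu}\Theta p_{\ss\mu'\nu'}$ restricted to any such subspace is a scalar multiple of a partial isometry into the target block. This yields at once the isometry assertion (taking $\mu\nu=\mu'\nu'$) and reduces the proposition to computing a single scalar for each configuration of signs and indices $(k,l)$.

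Next, I would compute these scalars by evaluating $\Theta$ on a single explicit vector in each relevant subspace. Using the identification $p_\alpha H\simeq B(H_\alpha)$ coming from the GNS construction and the formula $\Theta(x\xi_0\otimes y\xi_0)=(\id\otimes S)((x\otimes 1)\Delta(y))(\xi_0\otimes\xi_0)$ from Proposition~\ref{app_reverse_target}, the action of $\Theta$ on $p_k K=p_k H\otimes p_1H$ is expressed entirely through the coproduct of elements of $p_1\CC[\FO_N]$, i.e.\ through the embeddings of $\alpha_1$ into $\bar\alpha_k\otimes\alpha_{k\pm 1}$ determined by the $\FO_N$ fusion rule $\alpha_k\otimes\alpha_1\simeq\alpha_{k-1}\oplus\alpha_{k+1}$. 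The internal index $l$ selects which irreducible component appears when the left leg $p_k H\simeq H_k\otimes H_k^*$ is further decomposed via $\Delta^3$; concretely $q_l$ cuts out the part where matrix coefficients of $\alpha_{2l}$ are matched. The ascending/descending labels $(\mu',\nu')\in\{+,-\}^2$ correspond to the two possible choices of $\alpha_{k'}\in\{\alpha_{k-1},\alpha_{k+1}\}$ on each leg, so ``$\mu'=+$'' selects the $\alpha_{k+1}$-component (length $k+1$) and ``$\mu'=-$'' the $\alpha_{k-1}$-component (length $k-1$), which explains the two cases in the statement.

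The norms then follow from standard Frobenius reciprocity in the rigid $C^*$-tensor category $\Corep(\FO_N)$: normalizing an isometric intertwiner $V:H_{\alpha_{k\pm 1}}\to H_{\alpha_k}\otimes H_{\alpha_1}$ and composing with the analogous intertwiners at level $l$, the computation of $\|\Theta\xi\|^2/\|\xi\|^2$ collapses into a Temperley--Lieb diagrammatic evaluation whose closed circles each contribute a quantum dimension $\qdim\alpha_k$ or $\qdim\alpha_{k-1}$. Rearranging these loop weights yields exactly $s_{k,l}^2=\qdim(\alpha_l)\qdim(\alpha_{l-1})/(\qdim(\alpha_k)\qdim(\alpha_{k-1}))$; the identity $c_{k,l}^2+s_{k,l}^2=1$ then follows from unitarity of $\Theta$ together with the fact that $\alpha_{l-1}\oplus\alpha_{l+1}$ exhausts $\alpha_l\otimes\alpha_1$ (with the convention $\qdim(\alpha_{-1})=0$ handling the boundary case $l=0$ where $s_{k,l}=0$ and the block becomes strictly isometric).

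The main obstacle I anticipate is careful bookkeeping of the normalization factors arising from the GNS inner product on $p_\alpha H$, which weights matrix coefficients of $\alpha$ by inverse powers of $\qdim(\alpha)$ via the Woronowicz modular data, and from the antipode $S$ appearing in Proposition~\ref{app_reverse_target} (which intertwines left and right multiplications on $H$). Propagating these weights correctly through the tensor product identifications, and matching them to the four-way $\pi_4$ representation, is where almost all the real work lies. A clean implementation is to work entirely in the Temperley--Lieb diagrammatic calculus for $\Corep(\FO_N)$, representing $\Theta$ restricted to each block as a small planar tangle and computing its norm on an irreducible piece as a ratio of circle evaluations, each equal to a $\qdim(\alpha_j)$.
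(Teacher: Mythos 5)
You should first be aware that the paper contains no proof of this statement: Proposition~\ref{app_weights} is imported from \cite[Lemma~6.3]{Vergnioux_Cayley}, and the appendix only records it together with the remark that the case $\mu'=-$ \emph{corrects} the original lemma. So the relevant comparison is with the proof in that reference, and your outline follows the same strategy it uses: $\Theta$ commutes with $\pi_4\Delta^3$ while $p_k$, $p_{\ss\mu\nu}$, $q_l$ lie in its commutant; each nonzero $q_lp_{\ss\mu'\nu'}p_kK$ is an irreducible (multiplicity-one) copy of $\alpha_{2l}$ inside $B(H_{k\pm1},H_{k\pm1})\simeq\alpha_{k\pm1}\otimes\alpha_{k\pm1}$ because the fusion rules of $\FO_N$ are multiplicity-free; Schur's lemma then makes every block $p_{\ss\mu\nu}\Theta p_{\ss\mu'\nu'}q_lp_k$ a scalar multiple of an isometry; and since Proposition~\ref{app_prop_orient} forces the image of $q_lp_{\ss\mu'\jok}p_kK$ into exactly two of the four sign-blocks of $p_{k\pm1}K$, unitarity of $\Theta$ gives $c_{\cdot,l}^2+s_{\cdot,l}^2=1$ once one of the two scalars is computed. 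This reduction is correct and well argued (your attribution of the commutation facts to Proposition~\ref{app_prop_orient} alone is slightly off --- the commutation of $q_l$ with $p_k$ and $p_{\ss\pm\pm}$ and the irreducibility statements are what the paragraph preceding the definition of $q_l$ supplies --- but the needed facts are all available).

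The gap is that the one step carrying all the quantitative content, namely $s_{k,l}^2=\qdim(\alpha_l)\qdim(\alpha_{l-1})/(\qdim(\alpha_k)\qdim(\alpha_{k-1}))$, is described rather than derived: ``the closed circles each contribute a quantum dimension'' and ``rearranging these loop weights yields exactly $s_{k,l}^2$'' names the answer without producing it. To close this you must fix normalized intertwiners $H_{\alpha_{k\pm1}}\to H_{\alpha_k}\otimes H_{\alpha_1}$, choose an explicit unit vector in $q_lp_{\ss\mu'\nu'}p_kK$, push it through the formula of Proposition~\ref{app_reverse_target}, and track the GNS and antipode normalizations you yourself identify as the hard part --- and you must do this separately for $\mu'=+$ and $\mu'=-$, since the index appearing in the weight ($k+1$ versus $k$) differs between the two cases. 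This is not a pedantic complaint: the paper explicitly notes that the published statement of \cite[Lemma~6.3]{Vergnioux_Cayley} was wrong in the case $\mu'=-$, i.e.\ precisely this bookkeeping is where the original argument slipped, so a proof that waves through the diagram evaluation cannot be considered complete. It would also be worth checking your final formula against the boundary behaviour the paper actually uses, namely that $s_{k,0}=0$ (so $q_0K_{\ss+-}=\{0\}$) under the convention $\qdim(\alpha_{-1})=0$, and that $c_{k,1}\in\itv]0,1]$ with $c_{k,1}\to1$, which is what drives the weighted-shift analysis in Section~\ref{sec_reversing}.
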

Note that this statement corrects \cite[Lemma~6.3]{Vergnioux_Cayley} in the case
when $\mu'=-$, which is not used in that article.

\bibliographystyle{plain} 
\bibliography{1-bounded} 

\bigskip

\author{Michael Brannan}:
\address{Department of Mathematics,
Mailstop 3368, Texas A\&M University, 
College Station, TX 77843-3368, USA.}
\email{mbrannan@math.tamu.edu} \\

\author {Roland Vergnioux}:
\address{Normandie Univ, UNICAEN, CNRS, 
LMNO, 14000 Caen, France.} \\
\email{roland.vergnioux@unicaen.fr}

\end{document}